\def\uwave{\bgroup \markoverwith{\lower3.5\p@\hbox{\sixly \textcolor{red}{\char58}}}\ULon}
\font\sixly=lasy6
\DeclareMathSymbol{\R}{\mathbin}{AMSb}{"52}
\newcommand{\A}{\mathcal{A}}
\newcommand{\E}{\mathcal{E}}
\newcommand{\motzn}{\mathcal{M}_n}
\newcommand{\U}{\mathcal{F}}
\renewcommand\mp{\ensuremath{M}}
\newcommand{\Res}{\mathop{\rm Res}\nolimits}
\newcommand{\Ine}{\mathop{\rm Ine}\nolimits}
\newcommand{\Edif}{\mathop{\rm Edif}\nolimits}
\def\dd{\kern.1ex\mbox{\raise.7ex\hbox{{\rule{.35em}{.15ex}}}}\kern.1ex}
\def\sdd{\kern.1ex\mbox{\raise.4ex\hbox{{\rule{.35em}{.15ex}}}}\kern.1ex}
\def\orsg{\mathop{\sf Orsg}\nolimits}
\def\lsg{\mathop{\sf lsg}\nolimits}
\def\rsg{\mathop{\sf rsg}\nolimits}
\def\des{\mathop{\sf des}\nolimits}
\def\exce{\mathop{\sf exc}\nolimits}
\def\wex{\mathop{\sf wex}\nolimits}
\def\cross{\mathop{\sf cross}\nolimits}
\newcommand\fix{{\sf fix}}
\def\inv{\mathop{{\sf inv}}\nolimits}
\def\maj{\mathop{{\sf maj}}\nolimits}
\def\occ{\mathop{\sf occ}\nolimits}
\newcommand{\wt}{\mathop{\sf wt}\nolimits}
\newcommand{\exc}{\mathop{\sf exc}\nolimits}
\newcommand{\lexc}{\mathop{\sf le}\nolimits}
\newcommand{\aexc}{\mathop{\sf aexc}\nolimits}
\newcommand{\laexc}{\mathop{\sf lae}\nolimits}
\newcommand{\nile}{\mathop{\sf nile}\nolimits}
\newcommand{\ile}{\mathop{\sf ile}\nolimits}
\newcommand{\ie}{\mathop{\sf ie}\nolimits}
\newcommand{\nie}{\mathop{\sf nie}\nolimits}
\newcommand{\nilae}{\mathop{\sf nilae}\nolimits}
\newcommand{\ilae}{\mathop{\sf ilae}\nolimits}
\newcommand{\iae}{\mathop{\sf iae}\nolimits}
\newcommand{\niae}{\mathop{\sf niae}\nolimits}
\newcommand{\fp}{\mathop{\sf fp}\nolimits}
\newcommand{\iefp}{\mathop{\sf iefp}\nolimits}
\newcommand{\inve}{\mathop{\sf inve}\nolimits}
\newcommand{\ninve}{\mathop{\sf ninve}\nolimits}
\newcommand{\inva}{\mathop{\sf inva}\nolimits}
\newcommand{\prex}{\mathop{\sf prex}\nolimits}
\newcommand{\fola}{\mathop{\sf fola}\nolimits}
\newcommand{\ninva}{\mathop{\sf ninva}\nolimits}
\newtheorem{lemma}{Lemma}
\newtheorem{corollary}{Corollary}
\newtheorem{theorem}{Theorem}
\newtheorem*{theorem*}{Main Theorem}
\newtheorem{proposition}{Proposition}
\newtheorem{conjecture}{Conjecture}
\theoremstyle{definition}
\newtheorem{remark}{Remark}
\newtheorem{definition}{Definition}
\newtheorem{problem}{Problem}
\newcommand\ZZ{\mathbb Z}
\newcommand\NN{\mathbb N}
\newcommand\clsn{\mathcal{S}_n}
\newcommand\snk{\mathcal{S}_n^k}
\newcommand\sig{\sigma}
\newcommand\cf{\ensuremath{\mathcal{C}}}
\newcommand\cfu{\ensuremath{\mathcal{C_\bu}}}
\newcommand{\params}{\ensuremath{a,b,c,d,f,g,h,\ell,p,r,s,t,u,w}}
\newcommand{\gz}{\ensuremath{\mathcal C_{\params}(z)}}
\newcommand\karr{$k$-arrangement}
\newcommand\karrs{$k$-arrangements}
\renewcommand{\a}{\ensuremath{\mathfrak{a}}}
\newcommand\cd{\cdot}
\newcommand\ra{\rightarrow}
\newcommand\fixpts{{\mathsf{Fix}}}
\newcommand\ank{{\mathsf{A_n^k}}}
\newcommand\arr[2]{{\mathsf{A_{#2}^{{#1}}}}}
\def\csummand{\ensuremath{
a^{\ile(\sigma)} 
b^{\nile(\sigma)} 
c^{\ie(\sigma)-\ile(\sigma)}   
d^{\nie(\sigma)-\nile(\sigma)}   
f^{\ilae(\sigma)} 
g^{\nilae(\sigma)} 
h^{\iae(\sigma)-\ilae(\sigma)}
\ell^{\niae(\sigma)-\nilae(\sigma)}}\\[-1.5ex]&&\hspace{1pt}\times\,
p^{\exc(\sigma)-\lexc(\sigma)} 
r^{\aexc(\sigma)-\laexc(\sigma)} 
s^{\lexc(\sigma)} 
t^{\laexc(\sigma)} 
u^{\fp(\sigma)}  
w^{\iefp(\sigma)}
z^n 
}
\def\wtprod{\ensuremath{
a^{\ile(\sigma)} 
b^{\nile(\sigma)} 
c^{\ie(\sigma)-\ile(\sigma)}   
d^{\nie(\sigma)-\nile(\sigma)}   
f^{\ilae(\sigma)} 
g^{\nilae(\sigma)} 
h^{\iae(\sigma)-\ilae(\sigma)}
\ell^{\niae(\sigma)-\nilae(\sigma)}}\\[0.2ex]&&\hspace{1pt}\times\,
p^{\exc(\sigma)-\lexc(\sigma)} 
r^{\aexc(\sigma)-\laexc(\sigma)} 
s^{\lexc(\sigma)} 
t^{\laexc(\sigma)} 
u^{\fp(\sigma)}  
w^{\iefp(\sigma)}
}
\newcommand\disp{\displaystyle}
\title{Permutations, Moments, Measures}
\author{Natasha Blitvi\'c\thanks{This work was completed while the author was an Academic Guest at ETH Z\"urich.}\\
\small Department of Mathematics and Statistics\\[-0.8ex]
\small Lancaster University, UK\\[-0.8ex]
\small\tt natasha.blitvic@lancaster.ac.uk\\
\and
Einar Steingr\'imsson\thanks{Partially supported by a Leverhulme Research Fellowship.}\\
\small Department of Mathematics and Statistics\\[-0.8ex]
\small University of Strathclyde, UK\\[-0.8ex]
\small\tt einar@alum.mit.edu}
 \date{}
\begin{document}

\maketitle
\thispagestyle{empty}

\begin{abstract}
Which combinatorial sequences correspond to moments of probability measures on the real line?  We present a generating function, in the form of a continued fraction, for a fourteen-parameter family of such sequences and interpret these in terms of combinatorial statistics on the symmetric groups. 
Special cases include several classical and noncommutative probability laws, along with a substantial subset of the orthogonalizing measures in the $q$-Askey scheme, now given a 
new combinatorial interpretation in terms of elementary permutation statistics. 
This framework further captures a variety of interesting combinatorial sequences including, notably, the moment sequences associated to distributions of the 
numbers of occurrences of (classical and vincular) permutation patterns of length three. This connection between pattern avoidance and broader ideas in classical and noncommutative probability is among several intriguing new corollaries, which generalize and unify results 
previously appearing in the literature, while opening up new lines of inquiry. 

The fourteen combinatorial statistics further generalize to signed and colored 
permutations, and, as an infinite family of statistics, to the \emph{$k$-arrangements}: permutations with $k$-colored fixed points, introduced here along with several related results and conjectures.

\end{abstract}

\section{Introduction}\label{section-intro}

By describing the distributions of random objects in terms of their moment sequences, a number of fundamental probability laws are seen to be equivalent to key constructions in combinatorics. A well-known example is the semicircle law \cite{Wigner1955} whose ubiquity in random matrix theory and free probability is paralleled by the pervasiveness of its moments, the Catalan numbers, in the enumerative world. 
Similarly, the moments of the Poisson and Gaussian laws enumerate, respectively, set 
partitions and perfect matchings of sets, while the `noncommutative analogues' of these probability laws are given by combinatorial refinements of the aforementioned moment sequences \cite{Bozejko1991, Anshelevich2001, Blitvic2012, Bozejko2015, Ejsmont}. This probabilistic interpretation further extends to combinatorial statistics on set partitions, symmetric groups and other Coxeter groups \cite{Bozejko1994,Bozejko2017}. 

The classical theorem of Hamburger \cite{Hamburger} provides a complete characterization of moment sequences associated to (positive Borel) measures on the real line. Namely, given some real-valued sequence $(m_n)_{n\geq 0}$, the existence of a measure $\mu$ with the property that
\begin{equation}\label{eq-moments-general}
\int_\R x^n\,d\mu(x)=m_n
\end{equation}
is equivalent to the positive semidefiniteness of the Hankel matrices $(m_{i+j})_{0\leq i,j \leq n}$ for all $n\in\NN_0.$ (The measure $\mu$ is of course a probability measure whenever $m_0=1$.)  Unfortunately, establishing the positivity of the Hankel matrices is a difficult combinatorial problem. Rather, showing that a given combinatorial sequence is a moment sequence is typically approached by other means, including explicitly writing down the corresponding measure (see e.g.\ \cite{Mlotowski2010Documenta,Mlotowski2012Colloquium,Mlotowski2014,Martin2015,Mlotowski2018,Sokal2018}).

On the whole, the integer sequences that are positive definite (in the preceding sense) are relatively few, but their class seems to include surprisingly many classical sequences. 
Rather than considering such problems in isolation, we are interested in general principles that may help elucidate the link between combinatorial structure and positivity. 

In this paper, we introduce a fourteen-parameter class of combinatorial sequences which are moments of measures on the real line.  Our central object is the following continued fraction. 
\begin{definition}\label{def-gz}
For parameters $\params\in\R$, let
\begin{equation}\label{eq-main}
\cf(z)=\gz:=\cfrac{1}{1-\alpha_0z-\cfrac{\beta_1z^2}{1-\alpha_1z-\cfrac{\beta_2z^2}{\ddots}}}
\end{equation}
with
\begin{equation}\label{eq-alpha-beta}
\alpha_n=u\cd w^n + s\,[n]_{a,b} + t\,[n]_{f,g},\quad \quad
\beta_n=p\,r\,[n]_{c,d}\,[n]_{h,\ell}.
\end{equation}
\end{definition}
(Above, we follow the standard convention by letting $[n]_{x,y}:=x^{n-1}+x^{n-2}y+\cdots+xy^{n-2}+y^{n-1}$ for $n\in\NN$, with $[0]_{x,y}:=0$.)

By our main theorem (Theorem~\ref{thm-main}, next section), the continued fraction $\cf$ has a natural combinatorial interpretation in terms of fourteen elementary combinatorial statistics on permutations, to be defined in detail in the next section.  Except for the traditionally defined fixed points, these are all based on \emph{excedances}, that is, 
integers $i\in[n]:=\{1,2,\ldots,n\}$ such that $\sig(i)>i$, \emph{anti-excedances}, $\sig(i)<i$, and \emph{inversions} among letters of $\sig$, that is, pairs $(i,j)$ such that $i<j$ and $\sig(i)>\sig(j)$.  We distinguish between \emph{linked} and \emph{non-linked} excedances (an intuitive geometric property, defined shortly) and anti-excedances, respectively, as well as between inversions and non-inversions associated to these. 
An aspect of the continued fraction $\cf$ worth highlighting is the systematic nature of its combinatorial interpretation. Namely, the fourteen combinatorial statistics characterize the action of $\sigma\in\clsn$ on each element of $[n]:=\{1,2,\ldots,n\}$ by placing each pair $i\mapsto \sigma(i)$ into one of the five aforementioned excedance-based classes, with the relations between the elements in each class (and in one case between classes) captured in terms of inversions and non-inversions. As a result, it becomes relatively straightforward to understand, on a combinatorial level, the appearance in specializations of $\cf$ of the combinatorial families given in Table~\ref{table-measures}. 

Continued fraction expansions of ordinary generating functions commonly appear in combinatorial theory, particularly in relation to labeled Motzkin paths \cite{Flajolet1980}. As such, the continued fraction $\cf$ generalizes aspects of several important papers connecting continued fractions, Motzkin paths and statistics on permutations and other combinatorial structures, such as \cite{fravie,Flajolet1980,foata-zeilberger-denerts,biane-exc-inv-heine,medicis-viennot-laguerre,elizalde-cf-fpsac}.  Given a continued fraction associated with a well-studied combinatorial sequence, there are typically two approaches to arriving at a more general result.  One consists of attempting to `fold in' multiple related combinatorial statistics into one generating function, such as in e.g.  \cite{ClarkeSteingrimssonZeng,ismail-kasraoui-zeng-orthogonal,elizalde-cf-fpsac}. That is, given some multivariate generating function associated to several combinatorial sequences, the problem consists of identifying the continued fraction expansion of the said generating function.  Most recently, Sokal and Zeng~\cite{sokal-zeng-masterpolys} have undertaken an impressive study of various ways in which statistics on permutations, set partitions and perfect matchings can be consolidated in this manner.

The other approach, taken presently, seeks to provide a more general \emph{scheme} of generating functions. Here, the challenge is that of giving a natural combinatorial interpretation for some multiparameter continued fraction or, equivalently, for a multiparameter family of polynomials that are orthogonal with respect to some linear functional. The continued fractions studied may be associated with new schemes of orthogonal polynomials, such as \cite{simion-stanton-laguerre,simion-stanton-octabasic,randrianarivony-poly-ortho-sheffer}, or well-known ones, as in the particularly notable works of Corteel and Williams et al.~\cite{CorteelWilliams1, CorteelWilliams2} that provide a combinatorial interpretation for the Askey-Wilson family of $q$-hypergeometric orthogonal polynomials.

We find that the continued fraction $\cf$ of Definition~\ref{def-gz} includes a surprising number of interesting specializations of the Askey-Wilson polynomials, which appear as named families in the \emph{$q$-Askey scheme} \cite{koekoek-hyp-ortho-poly-q}. In this 
manner, $\cf$ ascribes a new and considerably more transparent combinatorial interpretation, in terms of elementary permutation statistics, to a substantial portion of the $q$-Askey scheme, while generalizing these families in a different direction.  Furthermore, $\cf$ extends the so-called octabasic Laguerre family of Simion and Stanton~\cite{simion-stanton-laguerre,simion-stanton-octabasic}, also associated with combinatorial statistics on the symmetric groups.  A crucial difference between our approach and that of Simion and Stanton is in the explicit accounting of the fixed points, which in prior work are combined with anti-excedances to form non-excedances. The result is a considerably greater degree of generality, admitting a wider array of interesting special cases, and a greater symmetry in the continued fraction. (Note that Simion and Stanton do not work with `excedance based' statistics, but their statistics correspond, via a certain bijection, to such statistics, as further discussed in Section~\ref{sec-comparisons}.)  Our work also partially extends a sixteen-parameter framework of Randrianrivony~\cite{randrianarivony-poly-ortho-sheffer}, itself another generalization of the work of Simion and Stanton. A different combinatorial interpretation of our fourteen-parameter continued fraction \cf\ can be obtained by specializing an
eighteen-parameter one in a recent paper by Sokal and Zeng~\cite{sokal-zeng-masterpolys}. They accomplish this by `mixing' statistics of two inherently different kinds, based respectively on excedances and `records' (relative maxima and minima), and by also using refinements of the crossing and nesting statistics of Corteel~\cite{Corteel2007}.
A detailed discussion of the similarities and differences between all these combinatorial approaches is found in Section~\ref{sec-comparisons}.

The simplicity of our combinatorial interpretation of the continued fraction \cf\ particularly pays dividends when considering the corollaries and extensions of the main theorem (Theorem~\ref{thm-main}). In particular, simple generalizations of our continued fraction capture corresponding statistics on the hyperoctahedral groups (signed permutations) and certain unitary reflection groups (colored permutations). Statistics on the latter were introduced in~\cite{es-indexed} and subsequently studied by various authors. In another direction, the separation of the fixed points in $\cf$ leads naturally to a generalization capturing various statistics on the \emph{$k$-arrangements}, introduced here, namely permutations with $k$-colored fixed points. These form an infinite family of combinatorial objects containing in a natural way the derangements (fixed-point free permutations), permutations and what previously have been called just arrangements by Comtet~\cite{comtet-advanced}, which also coincide with Postnikov's definition of `decorated permutations'~\cite{postnikov-webs}.  We present formulas and an exponential generating function for the numbers of \karrs\, along with a few results on combinatorial statistics on these objects and several conjectures.

Most of the corollaries appearing in this paper are new. Those that have previously appeared in more specialized works gain clarity by being seen to follow from a more general principle, as well as a new dimension, through juxtaposition to new ideas. This 
general perspective also opens up new lines of inquiry. As previously mentioned, the combinatorial statistics naturally lead us to define the notion of a $k$-arrangement, which come with interesting properties, some of which are shown here and others conjectured. This combinatorial framework also raises a number of natural questions from the point of view of probability and analysis, as the measures arising in the present setting include a surprising number of well-studied probability laws which were previously not connected into a single framework. And at the interface of probability and combinatorics, the framework suggests that pattern avoidance, at least for some patterns, may be naturally interpreted in terms of moments of random variables (whether classical or noncommutative), proposing new ways of approaching the long-standing open problems in this arena.

\begin{centering} ---------\\ \end{centering}

This paper is organized as follows:  In Section~\ref{sec-def-central-thm} we define the 
permutation statistics at the core of our constructions and present the main theorem  (Theorem~\ref{thm-main}), which describes the connection between these statistics and the continued fraction \cf. The correspondence relies on labeled Motzkin paths, using Flajolet's general correspondence.  

In Section~\ref{section-results} we describe, in six subsections, the various results derived from the main theorem. Namely, we identify the parameter ranges for which a sequence arising from \cf\ is a moment sequence of a unique probability measure on the real line (Section~\ref{subsec-permutation-moment}). As previously mentioned, the family of moment sequences encompassed by \cf\ is surprisingly interesting, including several well-known classical laws as well as noncommutative central limits. Furthermore, the orthogonal polynomials associated with \cf\ include a substantial portion of the $q$-Askey scheme (Section~\ref{sec-AW}). 
From the combinatorial perspective, the Hankel matrices associated to sequences obtained by specializations of~\cf\ all have determinants that are products of squares of factorials, encompassing at least ten known such instances from the literature (Section~\ref{subsec-hankel}). Specializations of \cf\ include a variety of well-known 
combinatorial sequences associated to permutations and set partitions (including perfect matchings), and also the distribution of the numbers of occurrences of several permutation patterns of length 3. As a first substantiated link between permutation patterns and moment sequences, we show that a sequence of numbers of avoiders of a pattern 
of length 3 is the moment sequence of a probability measure if and only if it is obtained as a specialization of \cf\ (Section~\ref{subsec-patterns}). The continued fraction \cf\ also captures a host of statistics on the symmetric groups (permutations), the hyperoctahedral groups (signed permutations), and, more generally, certain types of unitary reflection groups (colored permutations) (Section~\ref{subsec-hyperocta}). The combinatorial framework presented here leads us, in a natural way, to the definition of \emph{$k$-arrangements}, permutations with $k$-colored fixed points, which include the derangements, the permutations and what had previously been called just arrangements. We present some results on the structure and enumeration of these and several conjectures (Section~\ref{subsec-arrangements}).

Finally, in Section~\ref{section-bijection} we give a proof for the central bijection of the paper, between permutations and Motzkin paths labeled to capture the fourteen different 
statistics on permutations, and explain the connections and differences between the results in this paper and those by several other authors.

\section{Definitions and the Main Theorem}
\label{sec-def-central-thm}

The following definitions are most naturally understood using two-row diagrams depicting permutations as bijections from the set $[n]:=\{1,2,\ldots,n\}$ to itself (see Figure~\ref{fig:ex_bij}). We also use one-line notation, presenting a permutation $\sig$ as the word $\sig(1)\sig(2)\ldots \sig(n)$. 

The continued fraction \cf\ in Definition~\ref{def-gz} has a combinatorial interpretation in terms of fourteen elementary combinatorial statistics on permutations, 
defined as follows (see Figure~\ref{fig:ex_bij} for an example):

\begin{definition}\label{def-invstats}
Let $\sig$ be a permutation in $\clsn$ and $[n]=\{1,2,\ldots,n\}$. For each $\sig$ we define its
\begin{enumerate}
\item\label{def-1-1} number of excedances as $\exc(\sig):=\#\{i \in [n]\mid i<\sig(i)\}$,
\item number of fixed points as $\fp(\sig):=\#\{i \in [n]\mid i=\sig(i)\}$,
\item\label{def-1-3} number of anti-excedances as $\aexc(\sig):=\#\{i \in [n]\mid i>\sig(i)\}$,
\item number of linked excedances as $\lexc(\sig):=\#\{i \in [n]\mid \sig^{-1}(i)<i<\sig(i)\}$,
\item number of linked anti-excedances as $\laexc(\sig):=\#\{i \in [n]\mid \sig^{-1}(i)>i>\sig(i)\}$.
\end{enumerate}
We say that \emph{$i$ is an excedance} if $i<\sig(i)$, and likewise for the other definitions above. We also define the following statistics for each $\sig$:

\begin{enumerate}
\setcounter{enumi}{5}
\item\label{firststat} The number of inversions between excedances: 
$\ie(\sig):=\#\{i,j \in [n]\mid i<j<\sig(j)<\sig(i)\}$.

\item The number of inversions between excedances where the greater excedance is linked:\\
$\ile(\sig):=\#\{i,j \in [n]\mid i<j<\sig(j)<\sig(i) ~\mbox{and}~ \sig^{-1}(j)<j\}$.

\item\label{secondstat}  The number of restricted non-inversions between excedances:\\ $\nie(\sig):=\#\{i,j \in [n]\mid i<j<\sig(i)<\sig(j)\}$.

\item The number of restricted non-inversions between excedances where the rightmost excedance is linked: $\nile(\sig):=\#\{i,j \in [n]\mid i<j<\sig(i)<\sig(j) ~\mbox{and}~ \sig^{-1}(j)<j\}$.

\item\label{thirdstat} The number of inversions between anti-excedances:\\
$\iae(\sig):=\#\{i,j \in [n]\mid j>i>\sig(i)>\sig(j)\}$. 

\item The number of inversions between anti-excedances where the smaller anti-excedance is linked:\\
$\ilae(\sig):=\#\{i,j \in [n]\mid j>i>\sig(i)>\sig(j) ~\mbox{and}~ \sig^{-1}(i)>i\}$.

\item\label{fourthstat} The number of restricted non-inversions between anti-excedances:\\
   $\niae(\sig):=\#\{i,j \in [n]\mid j>i>\sig(j)>\sig(i)\}$.

\item The number of restricted non-inversions between anti-excedances where the smaller anti-excedance is linked: $\nilae(\sig):=\#\{i,j \in [n]\mid j>i>\sig(j)>\sig(i) ~\mbox{and}~ \sig^{-1}(i)>i\}$.

\item\label{laststat} The number of inversions between excedances and fixed points:\\ 
$\iefp(\sig):=\#\{i,j \in [n]\mid i<j=\sig(j)<\sig(i)\}$.
\end{enumerate}
\end{definition}

\newcommand\nineperm{
\filldraw [black] 
(1,0) circle (3pt)
(2,0) circle (3pt)
(3,0) circle (3pt)
(4,0) circle (3pt)
(5,0) circle (3pt)
(6,0) circle (3pt)
(7,0) circle (3pt)
(8,0) circle (3pt)
(9,0) circle (3pt)
(1,2) circle (3pt)
(2,2) circle (3pt)
(3,2) circle (3pt)
(4,2) circle (3pt)
(5,2) circle (3pt)
(6,2) circle (3pt)
(7,2) circle (3pt)
(8,2) circle (3pt)
(9,2) circle (3pt);
\node at (1,-0.5) {1};
\node at (2,-0.5) {2};
\node at (3,-0.5) {3};
\node at (4,-0.5) {4};
\node at (5,-0.5) {5};
\node at (6,-0.5) {6};
\node at (7,-0.5) {7};
\node at (8,-0.5) {8};
\node at (9,-0.5) {9};
\node at (1,2.5) {1};
\node at (2,2.5) {2};
\node at (3,2.5) {3};
\node at (4,2.5) {4};
\node at (5,2.5) {5};
\node at (6,2.5) {6};
\node at (7,2.5) {7};
\node at (8,2.5) {8};
\node at (9,2.5) {9};
}

\begin{figure}\centering
\begin{tikzpicture}[
scale=.7,
shorten > = .5ex, 
thick ]
\nineperm
\draw [densely dashed, thick, color=blue] (4,2) -- (4,0);
\draw [dotted, thick, color=red] (7,2) -- (7,0);
\draw [->, color = red] (1,2) -- (5,0);
\draw [->, color = red] (2,2) -- (9,0);
\draw [->, color = red] (3,2) -- (7,0);
\draw [->, color = blue] (4,2) -- (1,0);
\draw [->, color = blue] (5,2) -- (2,0);
\draw [->, color = green] (6,2) -- (6,0);
\draw [->, color = red] (7,2) -- (8,0);
\draw [->, color = blue] (8,2) -- (4,0);
\draw [->, color = blue] (9,2) -- (3,0);
%
%
\begin{scope}[xshift=35em, scale=.8, shorten > = 0ex, ]
\draw[color=black, thin] (-0.5,0) -- (11,0);
\draw[color=red] (0,0) -- (3,3);
\draw[densely dashed, color=blue] (3,3) -- (4.5,3);
\draw[color=blue] (4.5,3) -- (5.5,2);
\draw[color=green] (5.5,2) -- (7,2);
\draw[dotted, color=red] (7,2) -- (8.5,2);
\draw[color=blue] (8.5,2) -- (10.5,0);
\filldraw [black] 
(0,0) circle (2pt)
(1,1) circle (2pt)
(2,2) circle (2pt)
(3,3) circle (2pt)
(4.5,3) circle (2pt)
(5.5,2) circle (2pt)
(7,2) circle (2pt)
(8.5,2) circle (2pt)
(9.5,1) circle (2pt)
(10.5,0) circle (2pt);
\node at (0.3,0.7) {$p$};
\node at (1.0,1.7) {$dp$};
\node at (1.8,2.7) {$cdp$};
\node at (3.8,3.5) {$g^2t$};
\node at (5.5,2.9) {$\ell^2r$};
\node at (6.3,2.4) {$w^2u$};
\node at (7.8,2.4) {$as$};
\node at (9.5,1.7) {$hr$};
\node at (10.3,0.7) {$r$};
\end{scope}
\end{tikzpicture}
\caption{\label{fig:ex_bij}The permutation 597126843
  and the corresponding labeled Motzkin path. Excedances are red with dotted red line indicating linked excedance, anti-excedances blue with dashed blue line indicating linked anti-excedance, fixed point green. Note that the parameters not present in the labels in this example simply have exponent 0, such as $as$ standing in for $ab^0s$.}
\end{figure}
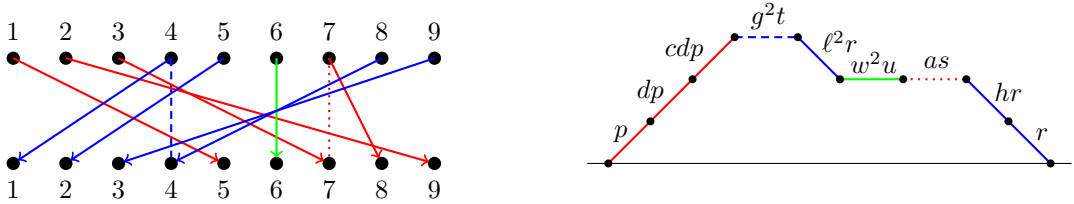

Note that $i$ is a linked excedance if and only if $i<\sig(i)$ and the $j$ for which $\sig(j)=i$ is also an excedance, that is, $j<\sig(j)$.  An analogous statement is true for linked anti-excedances and this is illustrated in Figure~\ref{fig:ex_bij}. Although we do not define this separately, it should be clear that $\exc-\lexc$ counts non-linked excedances, and $\aexc-\laexc$ non-linked anti-excedances.  

The items \ref{firststat}--\ref{laststat} in Definition~\ref{def-invstats} all refer to 
\emph{inversions} and \emph{non-inversions}, the latter with certain restrictions.  A pair $(i,j)$ with $i<j$ forms an inversion if $\sig(i)>\sig(j)$ and a non-inversion if $\sig(i)<\sig(j)$. A non-inversion is \emph{restricted} if we additionally require $\sig(i)>j$ when dealing with excedances or $\sig(j)<i$ in the case of anti-excedances.  Restricted non-inversions are the natural counterparts to inversions when working within the class of excedances (or anti-excedances), as seen by comparing items \ref{firststat} to \ref{secondstat} (resp.  \ref{thirdstat} to  \ref{laststat}) in Definition~\ref{def-invstats}.

The inversions are further refined based on whether the participating excedances or anti-excedances are linked. For example, $\ile(\sig)$ counts the number of inversions between excedances in which the greater excedance is linked. In the permutation 597126843 of Figure~\ref{fig:ex_bij}, the inversion $(2,7)$ is included in this statistic, as $\sig(2)=9>8=\sig(7)$ and the excedance 7 is linked.
As another example with the same permutation, $\niae(\sig)$ is the number of non-inversions between anti-excedances, such as $(4,8)$, where $\sig(4)=1<4=\sig(8)$.  Note that this non-inversion also contributes to $\nilae$ since the smaller anti-excedance 4 is linked.  
Finally, $\iefp$ is the sum over all fixed points $j$ of the number of excedances $i$ preceding $j$ that are greater than $j$ and thus form an inversion with $j$.  In our running example the only fixed point, 6, has two such excedances preceding it, namely 2 and 3.

The continued fraction $\cf$ in Definition \ref{def-gz} gives, in the sense of formal power series, the generating function for all the statistics above. The following is the central theorem of this paper:

\begin{theorem}\label{thm-main}
  \begin{equation}\label{eq-comb-main}
    \begin{array}{lll}
\cf(z)=&\displaystyle\sum_{n\geq 0}\sum_{\sigma_\in \clsn}&
\csummand.
    \end{array}
  \end{equation}
\end{theorem}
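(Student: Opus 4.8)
The plan is to prove Theorem~\ref{thm-main} via Flajolet's fundamental correspondence between continued fractions of the form \eqref{eq-main} and weighted Motzkin paths. Recall that Flajolet's theory expresses $\cf(z)$ as the generating function $\sum_{P} z^{|P|}\prod(\text{step weights})$, summed over all Motzkin paths $P$, where a level step at height $n$ carries weight $\alpha_n$, and a down step from height $n$ to $n-1$ carries weight $\beta_n$ (with up steps weighted $1$), and $\alpha_n,\beta_n$ are as in \eqref{eq-alpha-beta}. So it suffices to exhibit a weight-preserving bijection $\Phi$ from $\clsn$ to the set of Motzkin paths of length $n$ equipped with a \emph{labeling} refining these step weights, such that summing the labels over all labelings of a fixed path $P$ recovers $\prod \alpha_{h_i}\beta_{h_j}$, and such that the label-product of $\Phi(\sigma)$ equals \wtprod. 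This is exactly the ``central bijection'' whose construction is deferred to Section~\ref{section-bijection} of the paper; the proof of the theorem is then simply the assembly of Flajolet's lemma with that bijection.

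First I would recall (or cite) Flajolet's theorem in the precise form needed: for $\alpha_n = \sum_{k} \alpha_n^{(k)}$ and $\beta_n = \sum_k \beta_n^{(k)}$ expressed as sums of monomials, the continued fraction equals the generating function of Motzkin paths whose level (resp. down) steps at height $n$ are colored by a choice of one summand $\alpha_n^{(k)}$ (resp. $\beta_n^{(k)}$), with the path weighted by the product of chosen monomials times $z^{\text{length}}$. Here $\alpha_n = u w^n + s[n]_{a,b} + t[n]_{f,g}$ expands into $1 + (n) + (n)$ summands — a level step at height $n$ is labeled either $uw^n$ (a ``fixed point'' step), or $a^{n-1-j}b^{j}s$ for some $0\le j\le n-1$ (a ``non-linked excedance returning'' step), or $f^{n-1-j}g^{j}t$ similarly (a ``non-linked anti-excedance'' step); and $\beta_n = pr[n]_{c,d}[n]_{h,\ell}$ expands into $n^2$ summands $p r\, c^{n-1-i}d^{i} h^{n-1-j}\ell^{j}$. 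I would read off from the labeled path in Figure~\ref{fig:ex_bij} that the intended dictionary is: up steps $\leftrightarrow$ the opening of an excedance or a linked anti-excedance (carrying a $c^{\bullet}d^{\bullet}p$ or $\ell^{\bullet}\cdots r$ label), down steps $\leftrightarrow$ the closing of such, level steps $\leftrightarrow$ fixed points ($w^{\bullet}u$) or non-linked excedances ($a^{\bullet}b^{\bullet}s$) or non-linked anti-excedances ($f^{\bullet}g^{\bullet}t$), with the exponents of $a,b,c,d$ (resp. $f,g,h,\ell$) recording inversions / restricted non-inversions among excedances (resp. anti-excedances) accumulated so far, the $p,r$ bookkeeping $\exc-\lexc$ and $\aexc-\laexc$, the $s,t$ bookkeeping $\lexc,\laexc$, and the $w$ bookkeeping $\iefp$ against a still-open excedance. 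Then the proof reduces to the statement: the bijection $\Phi$ constructed in Section~\ref{section-bijection} sends $\sigma$ to a labeled Motzkin path whose label product is precisely \wtprod, and this label product, summed over the labelings compatible with a fixed underlying path, reproduces the monomial expansion of $\prod\alpha\beta$.

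The main obstacle — and the substantive content — is verifying that the statistics \ref{firststat}--\ref{laststat} of Definition~\ref{def-invstats} are exactly what is accumulated by the heights and labels along $\Phi(\sigma)$; in particular that the ``linked'' refinements (the split of $\ie$ into $\ile$ and $\ie-\ile$, etc.) correspond cleanly to the split $s[n]_{a,b}+t[n]_{f,g}$ inside $\alpha_n$ and to the factorization $[n]_{c,d}[n]_{h,\ell}$ inside $\beta_n$, and that the fixed-point contribution $u w^n$ with $w^{\iefp}$ is correctly segregated (this is precisely the point where the construction departs from Simion--Stanton, as the introduction emphasizes). Concretely, one must check that when reading $\sigma$ left to right and tracking the ``open'' excedance-tops and anti-excedance-bottoms, the height of the path at position $i$ equals the number of currently-open arcs, so that an excedance created at position $i$ against $h$ open excedance-arcs contributes exactly $h$ choices matching the $[h+1]_{c,d}$-type factor, distinguishing inversion from restricted non-inversion — and likewise on the anti-excedance side and for the level steps. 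I expect this to be a careful but mechanical case analysis once the bijection is in hand; I would organize it by the five excedance-classes (linked/non-linked excedance, linked/non-linked anti-excedance, fixed point), checking for each the exponent it contributes to each of the fourteen statistics versus the label it produces on its step of $\Phi(\sigma)$, and then invoking Flajolet to conclude. The only genuinely delicate bookkeeping is the cross-class statistic $\iefp$, which I would handle by noting that a fixed point at position $j$ sits at some height $h$ equal to the number of open excedance-arcs whose top exceeds $j$, so its level-step weight $uw^{h}$ contributes $w^{\iefp}$ summed over all fixed points, exactly matching Definition~\ref{def-invstats}\eqref{laststat}.
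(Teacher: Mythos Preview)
Your overall strategy is correct and is exactly the route taken in the paper: Theorem~\ref{thm-main} is deduced from Flajolet's correspondence together with a weight-preserving bijection $\eta:\clsn\to\motzn$, whose construction and verification are carried out in Section~\ref{section-bijection}. The decomposition of $\alpha_n$ and $\beta_n$ into labeled step-weights and the five-way classification of indices into (non-)linked (anti-)excedances and fixed points is also the right structural picture.

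However, the explicit dictionary you propose has the linked/non-linked assignments reversed, and this is not a cosmetic slip: it determines which indices become upsteps and downsteps versus level steps, so the path you build would not in general be a Motzkin path. In the paper's bijection (see Definition~\ref{def-paths} and the construction of $\eta$), upsteps correspond to \emph{non-linked} excedances (carrying the label $p\,c^{\bullet}d^{\bullet}$), downsteps to \emph{non-linked} anti-excedances (carrying $r\,h^{\bullet}\ell^{\bullet}$), and the three kinds of level steps correspond to fixed points ($u\,w^{\bullet}$), \emph{linked} excedances ($s\,a^{\bullet}b^{\bullet}$), and \emph{linked} anti-excedances ($t\,f^{\bullet}g^{\bullet}$). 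This matches the exponents in \eqref{eq-comb-main}: $p$ carries $\exc-\lexc$ and $s$ carries $\lexc$, not the other way around. The reason this assignment is forced is Lemma~\ref{lemma-Motzkin-steps}: only the non-linked excedances and non-linked anti-excedances are equinumerous with the prefix inequality needed for a Motzkin path; the linked ones must therefore be level steps. Once you correct this, your plan for the remaining verification (including the $\iefp$ bookkeeping, which is indeed the height at a fixed-point step) goes through essentially as in the paper.
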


Theorem~\ref{thm-main} follows from Flajolet's correspondence \cite[Section~1.1]{Flajolet1980} between continued fractions and labeled Motzkin paths, together with the following definition and theorem.  

Recall that a Motzkin path is a polygonal path in $\R^2$, consisting of upsteps $(1,1)$, level steps $(1,0)$ and downsteps $(1,-1)$, starting at the origin and ending on the $x$-axis without ever going below it (see Figure~\ref{fig:ex_bij}).

\begin{definition}\label{def-paths}
Let $\motzn:=\motzn(\params)$ be the set of Motzkin paths of length~$n$ labeled
 as follows, where $I_k=\{0,1,\ldots,k-1\}$:
 \begin{itemize}
 \item Upsteps from height $k-1$ to height $k\geq 1$ have labels in $\{pc^i d^{k-1-i}\mid i\in I_k\}.$

\item Downsteps from height $k$ to height $k-1$ have labels in 
$\{rh^i \ell^{k-1-i}\mid i\in I_k\}.$ 

\item Level steps at height $k$ have labels in
$$
\{u\cd w^i\mid i\in I_k\}~\cup~\{s\, a^i b^{k-1-i}\mid i\in I_k\}\;~\cup~\{t\, f^i g^{k-1-i}\mid i\in I_k\}.
$$ 
\end{itemize}
The \emph{weight} of a path $\mp \in \motzn$, denoted $\wt(\mp)$, is the product of its labels.
\end{definition}

The following theorem will be proved in Section~\ref{section-bijection}.
\begin{theorem}\label{thm-bij}
There exists a bijection $\eta: \clsn\to \motzn$ with the property that if $\mp=\eta(\sig)$ then
\begin{eqnarray*}
\wt(\mp)\!\!\!&=&
\wtprod.
\end{eqnarray*}
\end{theorem}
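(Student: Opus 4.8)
The plan is to construct the bijection $\eta$ via the standard "Françon--Viennot / Foata--Zeilberger"-style encoding, reading the permutation $\sigma$ one value $i = 1, 2, \ldots, n$ at a time and recording, at step $i$, a single step of the Motzkin path together with its label. Concretely, at step $i$ we look at the pair of "local" data $(\sigma^{-1}(i), \sigma(i))$, i.e.\ whether $i$ is an excedance, anti-excedance, or fixed point of $\sigma$, and whether $i$ is in the image of an excedance or anti-excedance (this is exactly the linked/non-linked dichotomy). This gives a $3\times 2$ (well, $5$ after collapsing) classification of $i$, which dictates the \emph{type} of the $i$-th step: $i$ is an excedance that is \emph{not} linked $\leftrightarrow$ upstep; $i$ is a linked anti-excedance $\leftrightarrow$ downstep; the remaining three cases (linked excedance, non-linked anti-excedance, fixed point) $\leftrightarrow$ the three families of level steps, matched respectively to the $s\,a^i b^{k-1-i}$, $t\,f^i g^{k-1-i}$, and $u\,w^i$ labels. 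The height of the path after step $i$ then tracks the number of "open" excedance arcs, i.e.\ excedances $j < i$ with $\sigma(j) > i$ that have not yet been closed; this is a classical invariant and it is routine to check it stays nonnegative and returns to $0$ at the end, so the output genuinely lies in $\motzn$.

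The heart of the argument is choosing, at each step, \emph{which} label in the allowed set $\{pc^i d^{k-1-i}\}$ (or the $r,h,\ell$ analogue, or the three level-step families) to assign, so that summing the exponents over all steps reproduces exactly the fourteen statistics. The idea: when processing value $i$, the current height $k$ equals the number of currently-open excedance arcs, and these arcs are linearly ordered by the left endpoint (equivalently by the value that will eventually close them). The choice of exponent $i$ (the power of $c$ on an upstep, of $h$ on a downstep, of $a$/$f$/$w$ on a level step) should record the \emph{position} of the new arc/point relative to the open arcs --- precisely, how many of the open excedance arcs will be "crossed" versus "nested" by the action at step $i$. One checks case by case that: crossings among excedance arcs are exactly the pairs counted by $\nie$, nestings by $\ie$; among anti-excedance arcs, $\niae$ and $\iae$; the refinements with the linked condition ($\ile, \nile, \ilae, \nilae$) come from whether the \emph{greater/smaller} of the two participating elements is itself linked, which is visible from the step types at the two relevant steps; and $\iefp$ counts open excedance arcs at the moment a fixed point is processed, giving the power of $w$ on that level step. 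The global counts $\exc - \lexc$, $\aexc - \laexc$, $\lexc$, $\laexc$, $\fp$ are read off directly as the number of steps of each type (with $p$ on every upstep, $r$ on every downstep, and $s, t, u$ as the leading letters of the three level-step families), matching the $\alpha_n, \beta_n$ of Definition~\ref{def-gz}.

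To prove $\eta$ is a bijection, I would exhibit the inverse explicitly: from a labeled Motzkin path, rebuild $\sigma$ by processing steps left to right, maintaining the ordered list of open arcs and their eventual "owners", using each step's type to decide whether to open a new excedance arc, close the appropriate anti-excedance arc, or insert a linked excedance / non-linked anti-excedance / fixed point at the position dictated by its label's exponent. Since at each step the number of admissible labels equals the current height plus the constant overhead ($1$ for $p$-choice, etc.), matching the combinatorial degrees of freedom (the position of the new element among the open arcs, or the choice of which arc to close), a counting check ($|\motzn|$ as a weighted sum collapses to $n!$ when all parameters are $1$) together with the explicit inverse map gives bijectivity.

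\textbf{Main obstacle.} The delicate point is the bookkeeping of the \emph{linked} refinements and of $\iefp$: the linked/non-linked status of the element at step $i$ depends on the step type at step $i$ itself, but whether a crossing or nesting pair $(i,j)$ contributes to, say, $\ile$ rather than just $\ie$ depends on the type of the step at $\sigma^{-1}(j)$ --- data that is "in the past" relative to $j$ but must be consistently attributed when we decide the label at step $j$. Verifying that the arc-position convention assigns each inversion/non-inversion to exactly one step, with the linked-refinement exponent landing on the correct one of the two steps, is where the case analysis is genuinely intricate and where an off-by-one in the indexing $I_k = \{0, \ldots, k-1\}$ would break the correspondence; this is presumably why the authors devote a full section to it.
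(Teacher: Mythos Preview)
Your overall strategy---encoding $\sigma$ step by step into a labeled Motzkin path, with the step type at $i$ determined by the linked/non-linked excedance/anti-excedance/fixed-point status of $i$, and the label recording an inversion count---is exactly the paper's. But your explicit assignment of step types is wrong in a way that breaks the construction: you send \emph{linked} anti-excedances to downsteps and \emph{non-linked} anti-excedances to $t$-labeled level steps, whereas it must be the other way around. Take $\sigma=21$: position $1$ is a non-linked excedance and position $2$ is a non-linked anti-excedance, with no linked anti-excedance in sight; under your rule the path is up-then-level and ends at height $1$, hence is not in $\motzn$. More structurally, downsteps carry the factor $r$ while $t$-level-steps carry $t$; since the target weight has $r^{\aexc(\sigma)-\laexc(\sigma)}$ and $t^{\laexc(\sigma)}$, the downsteps must be precisely the non-linked anti-excedances. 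Lemma~\ref{lemma-Motzkin-steps} is exactly the statement that with the \emph{correct} assignment the path never dips below the axis and returns to $0$.

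Once that swap is fixed, your construction coincides with the paper's map $\eta$ in Section~\ref{section-bijection}: the labels in \eqref{labels1}--\eqref{labels5} are precisely the ``position relative to open arcs'' data you describe, formalized as the vector statistics $\inve_i,\ninve_i,\inva_i,\ninva_i,\iefp_i$ of Definition~\ref{vector-stats}. Where your proposal genuinely diverges is in proving bijectivity. You plan to build an explicit inverse by reconstructing $\sigma$ left to right; this can be made to work, but it is exactly the ``delicate bookkeeping'' you yourself flag as the main obstacle. The paper instead notes $|\motzn|=n!$ (Lemma~\ref{lemma-num-motz}), so it suffices to show $\eta$ is injective, and does so via a short standalone uniqueness lemma (Lemma~\ref{lemma-maps-sets}): any bijection $\phi:A\to B$ with $\phi(i)>i$ for all $i$ is uniquely determined by its vector of inversion counts $i\mapsto\#\{x: x<i<\phi(i)<\phi(x)\}$. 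Applying this with $A=\E$ (the excedance positions) and again with reversed inequalities for the anti-excedances pins down $\sigma$ from the labels, with no explicit inverse needed.
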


The fourteen permutation statistics listed above are basic building blocks from which a number of classical combinatorial statistics on permutations and set partitions can be constructed. Thus, 
by specializing the values of its parameters, the continued fraction $\cf=\gz$ enumerates a variety of familiar objects, examples of which are listed in Table~\ref{table-measures}. Further specializations appear in various results in Sections~\ref{subsec-patterns}--\ref{subsec-arrangements}.

\section{Corollaries and Extensions}
\label{section-results}

The central result of this paper, Theorem~\ref{thm-main}, gives a combinatorial interpretation for the continued fraction \cf\ in terms of fourteen elementary statistics 
on the symmetric groups. This unifying lens gives rise to a variety of corollaries and extensions, described at present.

\subsection{Permutation Statistics as Moment Sequences}
\label{subsec-permutation-moment}

Consider the sequence
\begin{equation}
m_0=1,\quad m_n=[z^n]\gz\,\,\,\,\,\, (n\in\NN),
\label{eq-def-main}
\end{equation}
with the corresponding $\alpha_n,\beta_n$ as in \eqref{eq-alpha-beta} and the parameters $a$ through $w$ real numbers. 

From the general theory of moment problems and continued fractions (e.g. \cite{Shohat}), $(m_n)$ is the sequence of moments of some probability measure $\mu$ on the real line if and only if $\beta_n\geq 0$ for all $n\in\NN$. If $\beta_n>0$ for all $n<n_0$ and $\beta_{n_0}=0$, the measure $\mu$ is unique (up to equivalence) and is supported on a set of $n_0$ elements. If $\beta_n$ is strictly positive for all $n$, there may be multiple non-equivalent measures, supported on infinite sets, whose moments are given by \eqref{eq-def-main}. 

A sufficient condition for the determinacy of the Hamburger moment problem is Carleman's condition (e.g. \cite{Shohat}), namely,
$\sum_{n=1}^\infty \beta_n^{-1/2} =\infty.$
We therefore immediately obtain the following.

\begin{corollary}\label{cor-moments}
  For $\params\in \R$ with $pr>0$ and $c,d,h,\ell$ satisfying
\begin{equation}
c=-d\quad\text{\bf or} \quad h=-\ell \quad\text{\bf or} \quad (c>-d ~\text{ and }~ h> -\ell) \quad\text{\bf or}\quad (c<-d \text{ and } h< -\ell),
\label{eq-parameters-positive}
\end{equation}
the sequence $(m_n)$ in \eqref{eq-def-main} is the moment sequence of some probability measure on the real line. If $p=0$ or $r=0$, the measure is unique and consists of one atom; if $c=-d$ or $h=-\ell$, the measure is unique and is formed of two atoms. More generally, when $\max(|c|,|d|)\cdot\max(|h|,|\ell|)\leq 1,$ the measure is unique.
\end{corollary}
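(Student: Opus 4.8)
The plan is to derive everything from the standard theory of the Hamburger moment problem applied to the Jacobi parameters $\alpha_n,\beta_n$ in \eqref{eq-alpha-beta}, so the whole argument reduces to checking the sign and growth of $\beta_n = pr\,[n]_{c,d}\,[n]_{h,\ell}$. First I would record the elementary fact that for real $x,y$ the quantity $[n]_{x,y}=\sum_{i=0}^{n-1}x^{n-1-i}y^i$ satisfies $[n]_{x,y}\ge 0$ for all $n$ precisely when $x+y\ge 0$ (equivalently $x\ge -y$), with $[n]_{x,y}=0$ for all $n\ge 2$ exactly when $x=-y$; when $x+y<0$ the sign of $[n]_{x,y}$ alternates for large $n$. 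This is seen by writing $[n]_{x,y}=(x^n-y^n)/(x-y)$ when $x\ne y$ and examining the dominant term, and $[n]_{x,x}=nx^{n-1}$ when $x=y$. Combining this with $pr>0$, the product $\beta_n=pr\,[n]_{c,d}\,[n]_{h,\ell}$ is nonnegative for every $n$ exactly in the four cases of \eqref{eq-parameters-positive}: either one factor vanishes identically (from $n\ge 2$ on) because $c=-d$ or $h=-\ell$, or both factors are eventually positive because $c>-d$ and $h>-\ell$, or both are eventually sign-alternating in the same way because $c<-d$ and $h<-\ell$, so their product is again nonnegative. One should check the small-$n$ cases ($n=1$, where $[1]_{x,y}=1$ regardless) separately, but these cause no trouble.

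Next I would invoke the quoted consequence of the theory of continued fractions and moment problems: if $\beta_n\ge 0$ for all $n\in\NN$ then $(m_n)$ is a moment sequence of a probability measure on $\R$, and if $\beta_n>0$ for $n<n_0$ while $\beta_{n_0}=0$ then the measure is unique and supported on exactly $n_0$ points. In the case $p=0$ or $r=0$ we have $\beta_1=0$, so $n_0=1$ and the measure is a single atom. In the case $c=-d$ or $h=-\ell$ (with $pr\ne 0$), we have $\beta_1=pr\ne 0$ but $\beta_2=pr\,[2]_{c,d}\,[2]_{h,\ell}=pr(c+d)(h+\ell)=0$, so $n_0=2$ and the measure is two atoms. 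This disposes of the finitely-supported cases.

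For the remaining determinacy claim, when $\max(|c|,|d|)\cdot\max(|h|,|\ell|)\le 1$, I would apply Carleman's condition $\sum_{n\ge 1}\beta_n^{-1/2}=\infty$, as flagged in the text just before the corollary. The key estimate is an upper bound $|[n]_{x,y}|\le n\,\max(|x|,|y|)^{n-1}$, immediate from the triangle inequality on the $n$-term sum. Hence $\beta_n=|pr|\cdot|[n]_{c,d}|\cdot|[n]_{h,\ell}| \le |pr|\, n^2\,\bigl(\max(|c|,|d|)\max(|h|,|\ell|)\bigr)^{n-1}\le |pr|\,n^2$ under the stated hypothesis, so $\beta_n^{-1/2}\ge (|pr|\,n^2)^{-1/2}=|pr|^{-1/2}n^{-1}$, whose sum diverges; Carleman's condition is satisfied and the measure is unique. (If $\beta_{n_0}=0$ for some $n_0$ the measure is already unique by the finite-support case, so we may assume all $\beta_n>0$ here.)

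The main obstacle is essentially bookkeeping rather than depth: one must verify that the four alternatives in \eqref{eq-parameters-positive} are exactly the conditions making the product $[n]_{c,d}[n]_{h,\ell}$ nonnegative for all $n$ simultaneously — in particular that in the mixed cases ($c>-d$ with $h<-\ell$, or vice versa) the product does become negative for some $n$, so those cases are correctly excluded. This requires a clean statement of how the sign of $[n]_{x,y}$ behaves as a function of $n$ when $x+y<0$ (it is eventually governed by $\mathrm{sgn}(x)^{\,n}$ or $\mathrm{sgn}(y)^{\,n}$ according to which of $|x|,|y|$ is larger, with the boundary case $|x|=|y|$, i.e. $y=-x$, handled separately), and a short case analysis pairing these behaviors across the two factors. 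Everything else is a direct appeal to results already cited in the excerpt.
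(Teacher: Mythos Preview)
Your proof is correct and matches the paper's approach exactly: the paper gives no separate argument, merely noting (in the paragraph preceding the corollary) that the result follows at once from the general facts that $(m_n)$ is a moment sequence iff $\beta_n\ge 0$, that $\beta_{n_0}=0$ with $\beta_n>0$ for $n<n_0$ forces a unique $n_0$-atomic measure, and Carleman's criterion $\sum\beta_n^{-1/2}=\infty$ for determinacy, leaving the routine sign and growth analysis of $\beta_n=pr\,[n]_{c,d}[n]_{h,\ell}$ that you carry out to the reader. Two minor remarks: your side claim that $[n]_{x,-x}=0$ for \emph{all} $n\ge 2$ is false (for odd $n$ it equals $x^{n-1}$), but this is harmless since only $[2]_{c,-c}=0$, hence $\beta_2=0$, is needed; and the corollary asserts only \emph{sufficiency} of \eqref{eq-parameters-positive}, so the exclusion of the mixed cases that you flag as the main obstacle (while true, via $\beta_2=pr(c+d)(h+\ell)<0$) is not actually required.
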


In other words, the continued fraction \cf\ 
encodes moments of a very general class of probability measures on the real line whose moments have combinatorial interpretations in terms of permutations and set partitions. 
Table~\ref{table-measures} lists examples of combinatorial sequences, obtained as specializations of \eqref{eq-def-main}, which are moment sequences of familiar measures. These include several fundamental laws in classical and noncommutative probability, such as various noncommutative central limits. Table~\ref{table-ops} provides further examples, 
associated with sixteen named orthogonal polynomial sequences belonging to the $q$-Askey 
scheme, discussed in more detail in the following section.

That  \cf\  encodes such an abundance of well-studied probability measures is somewhat surprising. What is more, several of the measures in Table~\ref{table-measures} share a 
curious property: as `noncommutative analogues' of the Poisson and Gaussian laws, they are 
in some (noncommutative, combinatorial)  sense stable. 

Permutations and set partitions feature strongly in the basic constructions of noncommutative probability (see e.g. the monograph \cite{NicaSpeicher} for a combinatorial perspective on the free probability theory), but whether combinatorial statistics on permutations and set partitions also carry probabilistic meaning is generally less clear. For instance, the passage from perfect matchings to non-crossing perfect matchings is, from a probabilistic perspective, the difference between the classical Central Limit Theorem (CLT) to the free CLT (e.g. \cite{NicaSpeicher}). Moreover, perfect matchings refined for the number of crossings or those additionally refined for the number of nestings correspond, respectively and in an analogous manner, to the CLT governing a mixture of commuting/anti-commuting elements \cite{Speicher1992} or the CLT describing a mixture of elements that commute with respect to a real-valued commutation coefficient \cite{Blitvic2012}. 

Overall, the existence of a relatively simple unifying combinatorial framework that encodes a broad swath of important probability laws is intriguing. It begs the question of whether the measures belonging to the family \cf\ can be given a similarly consistent probabilistic interpretation, potentially through the lens of noncommutative probability.

\subsection{A scheme of orthogonal polynomials}
\label{sec-AW}

Closely related to continued fractions are sequences of polynomials that are orthogonal with respect to some linear functional.  Recall that any measure $\mu$ on the real line with finite moments, $m_n=\int_\R x^n\,d\mu(x)<\infty$ for all $n\in\NN$, can be associated with a continued fraction.  In particular, from the combinatorial point of view, it is natural to consider the formal power series generating function $\sum_{n\geq0}m_nz^n$, which has a continued fraction expansion of the form~\eqref{eq-main}, with coefficients $\alpha_n\in\mathbb R$ and $\beta_n\geq 0$ ($n\in\NN$) uniquely determined by $\mu$. In turn, 
the polynomials defined by 
\begin{equation}
P_0(x)=1,\quad P_1(x)=x-\alpha_0,\quad P_{n+1}(x)=(x-\alpha_n)P_n(x)-\beta_nP_{n-1}(x)
\label{eq-OP}
\end{equation}
form an orthogonal basis of $L^2(\mu)$, the Hilbert space of real or complex-valued functions that are square-integrable with respect to $\mu$. The practical importance of a convenient orthogonal basis cannot be overstated and, indeed, orthogonal polynomials appear in many areas of pure and applied mathematics, especially in analysis, mathematical physics, and numerical methods. The `classical' families, such as the Hermite, Laguerre, or Charlier (associated, respectively, with the Gaussian, Exponential, and Poisson measures), and their various $q$-analogues are brought together in 
the $q$-\emph{Askey scheme} \cite{koekoek-hyp-ortho-poly-q}.

Moments of the measures associated with hypergeometric orthogonal polynomials have been extensively studied in the combinatorial literature. For instance, while the moments of the orthogonalizing measures for the Hermite, Laguerre, and Charlier polynomials count, respectively, perfect matchings, permutations, and set partitions (see Table~\ref{table-measures}), their $q$-analogues correspond to combinatorial refinements of these structures (see e.g. \cite{ismail-stanton-viennot-comb-hermite-askey, medicis-stanton-white-charlier, simion-stanton-laguerre,simion-stanton-octabasic,kasraoui-stanton-zeng-salam-chihara-laguerre}). The aforementioned families can all be expressed as specializations of the Askey-Wilson polynomials, a four-parameter family of $q$-hypergeometric orthogonal polynomials that, along with the $q$-Racah family, is at the top of $q$-Askey hierarchy. This line of combinatorial inquiry culminated in the work of Corteel and Williams \cite{CorteelWilliams1, CorteelWilliams2}, who, in the context of studying the asymmetric exclusion process (ASEP), gave a combinatorial interpretation for the Askey-Wilson moments. 

Namely, let $Z_\ell$ be the partition function associated with the so-called staircase tableaux of size $\ell$ (that is, $Z_\ell=\sum_T \text{wt}(T)$, where the sum is taken over all such tableaux and the weight is the product of labels assigned to each tableau). Then, the $n$th moment of the Askey-Wilson polynomials in four parameters, $\alpha,\beta,\gamma,\delta$, and one `$q$' is given as \cite{CorteelWilliams1, CorteelWilliams2} 
\begin{equation}
m_n^{(\alpha,\beta,\gamma,\delta,q)}=\frac{(abcd;q)_\infty}{(q, ab, ac, ad, bc, bd, cd ; q)_\infty}\sum_{\ell=0}^n(-1)^{n-\ell}{n\choose \ell}\left(\frac{1-q}{2}\right)^\ell\frac{Z_\ell}{\Pi_{i=0}^{\ell-1}(\alpha\beta-\gamma\delta q^i)}.\label{eq-AW}
\end{equation}
While the formula \eqref{eq-AW} is certainly very interesting, it takes some effort, especially when seeking bijective proofs, to specialize it to the considerably simpler combinatorial interpretations of known special cases of this family. (For this, see \cite{CorteelWilliams2}, which also contains a version of \eqref{eq-AW} without an alternating sum.)

We instead show that a substantial portion of the $q$-Askey scheme can be given a 
transparent combinatorial meaning in terms of the fourteen permutation statistics of Definition~\ref{def-invstats}. Indeed, consider the polynomial sequence $(P_n)$ defined in \eqref{eq-OP}, with the coefficients 
$(\alpha_n)$ and $(\beta_n)$ as in \eqref{eq-alpha-beta}, orthogonal with respect to the family of measures discussed in the preceding section. It is easy to verify that $(P_n)$ includes the sixteen named orthogonal polynomial sequences listed in Table~\ref{table-ops}.  This interpretation straightforwardly recovers previously studied special cases in this scheme, such as \cite{ismail-stanton-viennot-comb-hermite-askey, medicis-stanton-white-charlier, simion-stanton-laguerre,simion-stanton-octabasic,kasraoui-stanton-zeng-salam-chihara-laguerre}, generalizing and unifying these into a consistent combinatorial framework.

The idea that a broader subset of the $q$-Askey scheme may be interpretable in terms of permutation statistics is due to Simion and Stanton (see Section 10 of \cite{simion-stanton-octabasic}). At present, due to a difference in the treatment of fixed points (see Section~\ref{sec-comparisons}), we are able to substantially enlarge this class of examples. It remains to be seen whether this combinatorial framework can be extended to yield a more complete elementary interpretation of the $q$-Askey scheme and related entities. (See, for example, the remark following Corollary~\ref{cor-des-2-31} in Section~\ref{subsec-patterns} concerning the appearance in \cf\ of familiar linearization coefficients.) For now, the further extensions beyond the setting of the symmetric groups, described in Sections~\ref{subsec-hyperocta} and \ref{subsec-arrangements}, and the striking connection to permutation patterns (Section~\ref{subsec-patterns}) provide a new perspective on these well-studied families of orthogonal polynomials.

\subsection{The Hankel transform}\label{subsec-hankel}

In enumerative combinatorics, the passage from a given integer sequence $(a_n)$ to the sequence of determinants of the Hankel matrices $(a_{i+j})_{0\leq i,j \leq n}$  is referred to as the \emph{Hankel transform}. Radoux was among the first to undertake a systematic study of the Hankel transforms of familiar integer sequences \cite{Radoux1979,Radoux1990,Radoux1991,RadouxSLC,Radoux2000,Radoux2002}. For instance, consider the so-called exponential polynomials $(e_n)$, 
\begin{equation}
e_0(x)=1,\quad e_n(x)=\sum_{\pi}x^{|\pi|}\quad (n\in\NN),
\end{equation}
where the sum runs over all set partitions of $[n]$ and $|\pi|$ denotes the number of blocks of a partition~$\pi$. Clearly, $e_n(1)$ is the $n$th Bell number, while the coefficients of $e_n(x)$ are the Stirling numbers of the second kind. In \cite{Radoux1979}, Radoux showed that 
\begin{equation}
\det(e_{i+j}(x))_{0\leq i,j\leq n}=x^{n+1\choose 2} \prod_{k=1}^n k!
\label{eq-radoux}
\end{equation}
(For a combinatorial proof of \eqref{eq-radoux}, see \cite{ehrenborg-hankel}, where it also shown that the identity continues to hold when the exponential polynomials are replaced 
by those enumerating set partitions with block sizes of at least, at most, or exactly equal to $2$.) 
Radoux's result is at the heart of two combinatorial phenomena. First, the Hankel transforms of many of the `classical' sequences, such as the Euler numbers, Catalan numbers, numbers of involutions, and Hermite polynomials, have similarly pleasing forms. Second, many combinatorial sequences share the same Hankel transform. For example, letting $D_n$ denote the number of derangements (that is, permutations without fixed points) on $n$ letters and setting $D_0:=1$, one can show \cite{Radoux1979, ehrenborg-hankel} that
\begin{equation}
\det(D_{i+j})_{0\leq i,j\leq n}=\prod_{i=1}^n (i!)^2\quad\text{for all }n\in\NN.
\end{equation}
In addition, \cite{OEIS} lists ten other known integer sequences (viz. A000023, A000142, A000522, A003701, A010842, A010843, A051295, A052186, A053486, A053487, in addition to the derangements A000166) that share the Hankel transform given above. 
The emergence of a unifying theme was reinforced in subsequent work, under different perspectives (e.g. \cite{Junod2003,Wimp2000}).

It turns out that many of the examples studied in \cite{RadouxSLC,Junod2003,Wimp2000,ehrenborg-hankel} as well as all of the aforementioned sequences in \cite{OEIS} are recovered from straightforward specializations of \eqref{eq-main}. In fact, a considerably more general result is true.  By standard results in the theory of orthogonal polynomials, the determinants of the Hankel matrices associated with the sequence $(m_n)$ are given in terms of products of the diagonal terms of the corresponding continued fraction, namely
\begin{equation}
\det (m_{i+j})_{0\leq i,j\leq n}=(\beta_1)^{n}(\beta_2)^{n-1}\ldots(\beta_{n-1})^2\beta_{n}.
\label{eq-Hankel-product}
\end{equation}
(The reader is referred to~\cite{krattenthaler-adv-det-calc-slc,krattenthaler-adv-det-calc-complement} for historical references and some example applications of~\eqref{eq-Hankel-product}.)  We thereby immediately obtain the following.

\begin{corollary}\label{coro-hankel-det}
For any sequence $(m_n)$ satisfying \eqref{eq-def-main}, we have that
\begin{equation}
\det(m_{i+j})_{0\leq i,j\leq n}= (pr)^{{n+1\choose 2}}\prod_{i=1}^n [i]_{c,d}!\, [i]_{h,\ell}!,
\end{equation}
where $[i]_{c,d}!:=[i]_{c,d}[i-1]_{c,d}\ldots [1]_{c,d}$.
\end{corollary}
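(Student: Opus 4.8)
The plan is to derive Corollary~\ref{coro-hankel-det} directly from the product formula \eqref{eq-Hankel-product} by substituting the explicit values of $\beta_n$ from \eqref{eq-alpha-beta}. Recall that \eqref{eq-Hankel-product} expresses $\det(m_{i+j})_{0\leq i,j\leq n}$ as $\prod_{k=1}^{n}\beta_k^{\,n+1-k}$, and that $\beta_k=pr\,[k]_{c,d}\,[k]_{h,\ell}$. So the whole proof amounts to substituting and re-bundling the factors; the only genuine task is the bookkeeping of exponents.

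First I would write
\[
\det(m_{i+j})_{0\leq i,j\leq n}
=\prod_{k=1}^{n}\bigl(pr\,[k]_{c,d}\,[k]_{h,\ell}\bigr)^{n+1-k}
=(pr)^{\sum_{k=1}^{n}(n+1-k)}\;\prod_{k=1}^{n}[k]_{c,d}^{\,n+1-k}\;\prod_{k=1}^{n}[k]_{h,\ell}^{\,n+1-k}.
\]
Then I would record the elementary identity $\sum_{k=1}^{n}(n+1-k)=\sum_{j=1}^{n}j=\binom{n+1}{2}$, which gives the claimed power of $pr$. The remaining step is to recognize that $\prod_{k=1}^{n}[k]_{c,d}^{\,n+1-k}=\prod_{i=1}^{n}[i]_{c,d}!$, where $[i]_{c,d}!=[i]_{c,d}[i-1]_{c,d}\cdots[1]_{c,d}$; this follows because, on the right-hand side, the factor $[k]_{c,d}$ appears in $[i]_{c,d}!$ precisely for those $i$ with $k\leq i\leq n$, i.e.\ exactly $n+1-k$ times. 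The identical argument applies to the $[k]_{h,\ell}$ product, and multiplying the three pieces together yields the stated formula.

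I expect there to be no real obstacle here: once \eqref{eq-Hankel-product} is granted (which the excerpt attributes to standard orthogonal-polynomial theory, with references to \cite{krattenthaler-adv-det-calc-slc,krattenthaler-adv-det-calc-complement}), the corollary is a one-line substitution followed by the two exponent-counting identities above. The only point worth a sentence of care is the degenerate case $n=0$, where the empty product convention makes both sides equal to $m_0=1$, consistent with $\binom{1}{2}=0$. If one wanted to be scrupulous about \eqref{eq-Hankel-product} itself holding without positivity hypotheses, one could note that it is a formal identity in the ring $\R[\params]$ — valid because the $\beta_n$ are polynomials in the parameters and the Hankel determinant is a polynomial identity — so no constraint such as $pr>0$ is needed for the corollary, matching its statement for $\params\in\R$ with no sign conditions.
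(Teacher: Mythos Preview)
Your proposal is correct and takes essentially the same approach as the paper: the paper simply states that the corollary is obtained ``immediately'' from \eqref{eq-Hankel-product} by substituting the explicit $\beta_n$ from \eqref{eq-alpha-beta}, and your write-up spells out exactly this substitution together with the elementary exponent bookkeeping.
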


In other words, the structure of the Hankel transform as a product of $q$-factorials is broadly shared among integer sequences and generating functions associated with permutations and set partitions. In particular, the above corollary encompasses the combinatorial objects given in Tables~\ref{table-measures} and \ref{table-ops}, as well as further specializations discussed in the next three sections. Overall, the observed similarity among Hankel transforms of familiar sequences rightfully points to a combinatorial phenomenon, the root of which has perhaps mostly to do with the preponderance of combinatorial questions that can be expressed in terms of natural statistics on permutations and set partitions.

\subsection{Permutation Patterns}\label{subsec-patterns}
The continued fraction $\cf$ unifies and refines several major enumerative results on \emph{permutation patterns}.

Concretely, a (classical) permutation pattern of length $\ell$ is a permutation on $\ell$ letters. A permutation $\sigma\in \clsn$ contains an \emph{occurrence of the pattern $\pi$} if it includes $\pi$ as a subpermutation, that is, if there is an $\ell$-tuple 
$i_i<i_2<\cdots<i_\ell$ such that $\sigma(i_j)<\sigma(i_k)$ if and only if $\pi(j)<\pi(k)$.  Moreover,~$\pi$ is a \emph{consecutive pattern} if its occurrences are 
constrained to segments $\sig_k\sig_{k+1}\ldots\sig_{k+\ell-1}$ of $\sig$.  A \emph{vincular pattern} is a pattern partitioned by dashes into blocks whose elements form consecutive patterns, so that in an occurrence of such a pattern, the letters corresponding to a block of consecutive letters must be consecutive in the permutation.  (For example, representing 
elements of $\clsn$ as words on~$[n]$ with each letter appearing exactly once, the subword 624 in 356214 is an occurrence of the vincular pattern $31\dd2$; in contrast, 524 is not since 5 and 2 are not adjacent in 356214, but 524 is an occurrence of the classical pattern $213$). A permutation \emph{avoids} a given pattern if it contains no occurrences of it.

Sporadic, sometimes implicit, results on permutation patterns have appeared for over a century (perhaps first in the work of MacMahon~\cite{macmahon} early last century).  In the 1960s Knuth~\cite{taocp-3} pointed out their connection to sorting devices in theoretical computer science, which has seen much work since then, and in 1985 appeared the seminal paper of Simion and Schmidt~\cite{simion-schmidt} who enumerated the sets of permutations avoiding any given set of patterns of length 3.  In the last two decades this has been an active area of research, with hundreds of papers published, and several new directions arising, involving asymptotics and order theoretic properties and topology of the set of all finite permutations partially ordered by pattern containment.  A fairly recent survey is~\cite{kitaev-book}, which, however, does not cover the more recent 
developments on the order theoretic and topological aspects.

The number of elements of $\clsn$ that avoid any single classical pattern of length 3 is the $n$th Catalan number. The numbers of permutations avoiding one of the vincular patterns $1\dd32$ or $1\dd23$ (and their symmetric equivalents, such as $23\dd1$ and $32\dd1$) are the Bell numbers \cite{cla-gpa}, while avoiders of the vincular pattern $2\dd31$ (which belongs to the only other symmetry class of vincular patterns of length~3 with one dash) are enumerated by the Catalan numbers (which is explicit in~\cite{cla-gpa} 
and implicit in~\cite[Thm. 10]{ClarkeSteingrimssonZeng}, as the statistic $\Res$ there equals the number of occurrences of $2\dd31$).

Both the Catalan and Bell numbers arise as special cases of the continued fraction \cf, which can be seen by comparing \cf\ to the continued fractions given in Proposition~5 and Theorem~2 in~\cite{Flajolet1980}.  (In fact, it is also possible to prove this, and more --- namely, including refinements involving the number of blocks in set partitions and, respectively, the number of descents in permutations counted by the Catalan numbers --- via 
a detailed analysis of the action of the bijection $\phi$ in Lemma~\ref{bij-fond} below.)  Furthermore, the permutations avoiding the consecutive pattern 123 are also enumerated by \cf\ (see Corollary~\ref{cor-des-321} below),
while the numbers of those avoiding the consecutive pattern 132 are not a moment sequence (as evidenced by the computation of a $6\times 6$ Hankel determinant) and, indeed, cannot be recovered from \cf\ (for this would require $\beta_1=1$ and $\beta_5<0$, which is 
inconsistent with~\eqref{eq-alpha-beta}).
These are the only symmetry classes of patterns of length 3. We therefore obtain the 
following corollary  to Theorem~\ref{thm-main}.

\begin{corollary}\label{cor-patterns}
Let $\pi$ be a pattern of length $3$ and denote by $x_n$ the number of elements of $\clsn$ that avoid $\pi$. Then $(x_n)$ is a sequence of moments of some (positive Borel) measure on 
the real line if and only if the ordinary generating function of $(x_n)$ is a special case of $\cf$.
\end{corollary}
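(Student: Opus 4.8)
The plan is to go through the symmetry classes of length-3 patterns one at a time, since by Simion and Schmidt there are exactly three of them: the classical patterns (all avoided by $C_n$, the Catalan number), the class of $132$-type consecutive patterns, and the remaining cases. For each class I would separately (i) exhibit an explicit specialization of the parameters \params\ making $\cf$ equal to the ordinary generating function of $(x_n)$, \emph{or} (ii) rule out $(x_n)$ being a moment sequence at all, by producing a Hankel determinant of the wrong sign. The two halves of the ``if and only if'' then match up class by class: whenever $(x_n)$ \emph{is} a moment sequence we will have produced it from $\cf$, and whenever it is \emph{not} we will have shown it cannot come from $\cf$ either (indeed $\cf$ only produces moment sequences once $pr>0$ and the sign conditions of Corollary~\ref{cor-moments} hold, and in fact any specialization of $\cf$ with $\beta_n<0$ for some $n$ fails to be a moment sequence by the general theory of continued fractions quoted before Corollary~\ref{cor-moments}).

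The positive direction reduces to two computations, both of which are essentially already flagged in the text. For the classical patterns, the avoiders are counted by $C_n$, and the Catalan generating function is the continued fraction with $\alpha_n$ constant and $\beta_n$ constant; comparing with \eqref{eq-alpha-beta} one sets, say, $s=t=u=w=0$ (killing all $\alpha_n$ except we actually want $\alpha_n=1$, so instead take $u=1$, $w=1$, $s=t=0$, giving $\alpha_n=1$) and $p=r=1$, $c=d=h=\ell=0$ so that $[n]_{c,d}=[n]_{h,\ell}=1$ for $n\ge 1$, giving $\beta_n=1$; this is precisely the Catalan continued fraction, matching Proposition~5 of \cite{Flajolet1980}. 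For the Bell-number class (the vincular patterns $1\dd 32$, $1\dd 23$ and their symmetric equivalents), one similarly specializes to the continued fraction of Theorem~2 of \cite{Flajolet1980} with $\alpha_n=n+1$ and $\beta_n=n$, achievable by taking $a=b=1$ (so $[n]_{a,b}=n$), $s=1$, $u=1$, $w=1$, $t=0$, and $p=r=1$, $h=\ell=1$, $c=d=0$ (so $\beta_n=[n]_{c,d}[n]_{h,\ell}= 1\cdot n=n$); the remaining vincular symmetry class ($2\dd 31$ and equivalents) is again Catalan and handled as above. For the consecutive pattern $321$ (equivalently $123$), Corollary~\ref{cor-des-321} (referenced in the text) already identifies its avoider-count generating function as a specialization of $\cf$, so I would simply cite that.

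The negative direction is the one genuinely separate piece of content: I must show that the consecutive pattern $132$ (and its symmetry class $213$) has an avoider sequence $(x_n)$ that is \emph{not} a moment sequence, and that it is not a specialization of $\cf$. The text already indicates the mechanism: computing the first several terms $x_0,\dots,x_6$ of the $132$-avoiding count and forming the $6\times 6$ Hankel determinant $\det(x_{i+j})_{0\le i,j\le 5}$, one finds it negative, so $(x_n)$ fails the positive-semidefiniteness criterion of Hamburger's theorem and hence is not a moment sequence of any positive measure. Separately, since $x_1=1$ forces $\alpha_0=1$ and $x_2=2$ then forces $\beta_1=1$, the Hankel-product formula \eqref{eq-Hankel-product} combined with the negative Hankel determinant forces some $\beta_n<0$; but \eqref{eq-alpha-beta} gives $\beta_n=pr\,[n]_{c,d}[n]_{h,\ell}$, and with $pr=\beta_1>0$ fixed this is incompatible with a sign change in $n$ under the constraint \eqref{eq-parameters-positive} — more simply, any specialization producing a genuine moment sequence must have all $\beta_n\ge 0$, which $(x_n)$ violates. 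Hence $(x_n)$ is not obtainable from $\cf$, completing the equivalence for this class.

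I expect the main obstacle to be entirely bookkeeping rather than conceptual: carefully verifying that the parameter specializations really do collapse the two-variable brackets $[n]_{x,y}$ to the intended values of $\alpha_n,\beta_n$ for \emph{all} $n$ (including the $n=0,1$ edge cases, where $[0]_{x,y}=0$ and $[1]_{x,y}=1$ by convention), and double-checking the numerics of the $132$-avoider sequence and its $6\times 6$ Hankel determinant so that the sign claim is beyond doubt. There is also a small point of care in invoking ``if and only if'' at the level of symmetry classes: I should note explicitly that avoider-counts are invariant under the symmetries of a pattern (reverse, complement, inverse), so it suffices to treat one representative per symmetry class, and that the three classes above exhaust all length-3 patterns.
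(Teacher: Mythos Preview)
Your overall strategy matches the paper's: enumerate the symmetry classes of length-$3$ patterns (classical, vincular, consecutive) and for each either (i) exhibit the avoider-count generating function as a specialization of $\cf$ and note it is a moment sequence (Catalan, Bell, consecutive $123$), or (ii) show it is neither (consecutive $132$). A minor point: your explicit parameter choices for Catalan and Bell are wrong --- setting $c=d=0$ gives $[2]_{0,0}=0$, not $1$, so $\beta_2=0$ rather than the desired constant. This is easily repaired (take, e.g., $c=1,d=0$ so that $[n]_{1,0}=1$ for all $n\ge1$) and you already flag the bracket edge cases as needing verification.

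The genuine gap is in your treatment of the consecutive-$132$ class, specifically the claim that its generating function is \emph{not} a specialization of $\cf$. You appeal to the constraint~\eqref{eq-parameters-positive}, but that is a \emph{hypothesis} in Corollary~\ref{cor-moments} ensuring a specialization yields a moment sequence, not a restriction on arbitrary specializations of $\cf$: the quantity $\beta_n=pr\,[n]_{c,d}[n]_{h,\ell}$ can certainly change sign in $n$ even with $pr>0$ (e.g.\ $c=1,d=-2$ gives $[2]_{1,-2}=-1$, $[3]_{1,-2}=3$). Your ``more simply'' sentence then conflates ``not a moment-yielding specialization'' with ``not a specialization at all'', which is a non-sequitur. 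Since the $(\Leftarrow)$ direction for this pattern requires precisely the latter, you need the paper's direct route: compute the actual $\beta_1,\ldots,\beta_5$ forced by the $132$-avoider sequence (with $\beta_1=1$ and $\beta_5<0$) and verify that no real $c,d,h,\ell$ with $pr=1$ satisfy $\beta_n=[n]_{c,d}[n]_{h,\ell}$ simultaneously for these $n$, i.e.\ that these data are inconsistent with~\eqref{eq-alpha-beta}. (A side remark: the opening attribution to Simion--Schmidt is off --- their paper concerns classical patterns only --- though you do in fact treat all the relevant classes in the body.)
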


While the enumerative results for the patterns encompassed by Corollary~\ref{cor-patterns} are well known, the result itself is more than a sum of its parts. Specifically, the study of permutation patterns varies greatly in difficulty depending on the choice of the pattern, ranging from fine-grained results for consecutive patterns \cite{elizalde-noy} to speculations on ever computing the number of avoiders of the classical pattern $1324$ \cite{DefyingGod}.  The latter is perhaps the most active front in the search for enumeration of single patterns, both exact and asymptotic, and only incremental progress has been made during the last decade in the enumeration of avoiders of this pattern.  Conway, Guttmann and Zinn~\cite{conway-guttmann-zinn-1324} pushed the exact enumeration of 1324-avoiders up to $n=50$ with a powerful algorithm, and used that to give an estimate for the asymptotics of this sequence. Based on this data, Elvey-Price has conjectured~\cite{elvey-price-thesis} that the numbers of 1324-avoiders form a moment sequence. Improving bounds have also been emerging for this pattern \cite{bevan-al-staircase-1324}, but they are still very wide, and no general methods seem to be on the horizon for exact enumeration of $1324$ avoiders, let alone of more intricate patterns.  However, Corollary~\ref{cor-patterns} gives hope that there are nevertheless general threads connecting these seemingly disparate problems and that general results may yet be achievable in this setting.

Corollary~\ref{cor-patterns} does in fact extend to the numbers of occurrences of two of the patterns mentioned above.  In \cite[Thm. 4.1]{elizalde-noy}, Elizalde and Noy gave an explicit (but complicated) expression for the exponential generating function for the number of occurrences of the consecutive pattern $123$ in permutations, and in \cite[Eq.~(5)]{elizalde-cf-fpsac} Elizalde gave a continued fraction for the corresponding ordinary generating function. Of course, this distribution is equal to the distribution of the number of occurrences of the consecutive pattern 321, denoted $\occ_{321}$. As a corollary of our main theorem, the continued fraction $\cf$ refines this result further, giving the joint distribution of $\occ_{321}$ with the number of \emph{descents} (places $i$ such that $\sigma_i>\sigma_{i+1}$). First we recall a bijection that we will use several times in what follows.
\begin{lemma}[{\cite[Thm.~51]{es-indexed}}]\label{bij-fond}
Given $\sig=a_1a_2\ldots a_n\in\clsn$ define $\phi:\clsn\ra\clsn$ by declaring $a_0=0$ and setting $\phi(\sig)=b_1b_2\ldots b_n$, where:
\begin{enumerate}
\item if $a_i>a_j$ for some $j>i$ then $b_{a_{i+1}}=a_i$, that is, $a_i$ is then in place $a_{i+1}$ in $\phi(\sig)$,
\item if $a_i<a_j$ for all $j>i$, find the rightmost letter smaller than $a_i$.  If this letter is $a_k$ then $b_{a_{k+1}}=a_i$, that is, then $a_i$ is in place $a_{k+1}$ in $\phi(\sig)$.
\end{enumerate} 
Then  $\phi$ is a bijection.
\end{lemma}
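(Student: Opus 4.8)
The plan is to verify directly that the map $\phi$ defined by the two cases is a well-defined bijection on $\clsn$, and the cleanest route is to exhibit the inverse explicitly. First I would check that $\phi(\sig)$ is well-defined, i.e., that each position $b_1,\ldots,b_n$ receives exactly one value. The key observation is that the recipe places each letter $a_i$ of $\sig$ into some position of $\phi(\sig)$, so it suffices to show the assignment $i\mapsto(\text{target position of }a_i)$ is itself a bijection from $[n]$ to $[n]$; equivalently, that the values $a_{i+1}$ (case 1) and $a_{k+1}$ (case 2), as $i$ ranges over $[n]$, are pairwise distinct and exhaust $[n]$. Since $\sig$ is a permutation, $i\mapsto i+1$ would already be a bijection from $\{0,1,\ldots,n-1\}$ to $[n]$ were we reading off consecutive positions; the content of the lemma is that the two-case rule reshuffles which letter ``claims'' each of the slots $a_{i+1}$ coherently.

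The main structural point I would isolate is the role of left-to-right minima. Call $a_i$ a \emph{left-to-right minimum} (relative to what follows) if $a_i<a_j$ for all $j>i$ — these are exactly the letters handled by case 2, and among them $a_n$ is always one. For a non-minimum $a_i$ (case 1), the claimed slot is the position $a_{i+1}$; for a minimum $a_i$ (case 2), one walks right to the rightmost letter $a_k$ that is smaller than $a_i$ and claims slot $a_{k+1}$. I would argue that the minima, read left to right, have strictly decreasing values, and that the ``rightmost smaller letter'' $a_k$ associated to a case-2 letter $a_i$ is itself a left-to-right minimum sitting just before the next-smaller minimum; this threading shows the case-2 letters collectively claim precisely the slots $a_{k+1}$ for $k$ running over (shifted) minima positions, while the case-1 letters claim the complementary slots. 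Combining the two cases, every value in $[n]$ is claimed exactly once, so $\phi(\sig)\in\clsn$.

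For injectivity/surjectivity of $\phi$ as a map $\clsn\to\clsn$, I would describe the inverse algorithm: given $\tau=b_1\ldots b_n$, reconstruct $\sig$ by reading off, for each value $v$, the letter sitting in position $v$ of $\tau$ together with the information of whether $v$ was reached via case 1 or case 2 — which can be detected because the case-2 slots are exactly those of the form $a_{k+1}$ with $a_k$ a minimum, a condition that can be recognized from $\tau$ by tracking the running minimum of the preimage word being built. One then argues by induction on $n$ (peeling off the letter $n$, or the letter in a distinguished position) that this reconstruction is forced and returns the unique $\sig$ with $\phi(\sig)=\tau$. Since a map between finite sets of equal cardinality that admits a left inverse is a bijection, this finishes the proof.

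The hard part will be the bookkeeping in the well-definedness step: making precise the claim that the case-2 letters and case-1 letters partition the target slots $\{a_{i+1}: 0\le i\le n-1\}=[n]$ without collision. The subtlety is that a case-2 letter $a_i$ does \emph{not} claim $a_{i+1}$ but rather $a_{k+1}$ for a strictly earlier-valued minimum $a_k$, so one must check that the slot $a_{i+1}$ that $a_i$ ``vacates'' is exactly the slot claimed by the previous (larger) minimum in the left-to-right order, and that no slot is claimed twice and none is skipped — this is a telescoping/matching argument on the chain of left-to-right minima, and getting the indexing at the two ends (the letter $a_n$, and the convention $a_0=0$) exactly right is where care is needed. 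Everything else (the inverse, the induction) is routine once this partition statement is nailed down.
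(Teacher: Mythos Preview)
The paper does not prove this lemma: it is quoted from \cite[Thm.~51]{es-indexed} and used as a black box, so there is no in-paper argument to compare against.

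Your overall plan---show that the target slots form a permutation of $[n]$, then exhibit an inverse---is the natural one and does work, but the sketch contains errors that would derail the write-up. The letters handled by case~2, those with $a_i<a_j$ for all $j>i$, are \emph{right-to-left} minima, not left-to-right minima; more importantly, read from left to right their values are strictly \emph{increasing}, not decreasing (if $i<j$ are both right-to-left minima then $a_i<a_j$ by the very definition applied at $i$). This reverses the direction of your ``threading'': for a case-2 letter $a_i$, the rightmost smaller letter $a_k$ is the right-to-left minimum immediately \emph{preceding} $a_i$ in position (or $a_0=0$ if $a_i$ is leftmost). With that correction the partition of slots is clean: writing the right-to-left-minimum positions as $0=i_0<i_1<\cdots<i_m=n$, the case-1 letters claim the slot indices $\{2,\ldots,n\}\setminus\{i_1{+}1,\ldots,i_{m-1}{+}1\}$ while the case-2 letters claim $\{1,i_1{+}1,\ldots,i_{m-1}{+}1\}$, and these together exhaust $[n]$.

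Your description of the inverse is too vague to stand as a proof. A quicker finish, once well-definedness is in hand, is to observe (as the paper remarks immediately after the lemma) that $\phi$ is a variant of the Foata--Sch\"utzenberger \emph{transformation fondamentale}: cutting $\sigma$ after each right-to-left minimum produces blocks which, read in reverse, are exactly the cycles of $\phi(\sigma)$ (verify this on the paper's example $\phi(264135)=413652$, with blocks $2641\,|\,3\,|\,5$ and cycle decomposition $(1\,4\,6\,2)(3)(5)$). The inverse is then the standard cycle-to-word map, ordering cycles by increasing minimum and writing each with its minimum last.
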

As an example, $\phi(264135)=413652$. As it turns out, the bijection $\phi$ is a variation on the \emph{transformation fondamentale} of Foata and Sch\"utzenberger~\cite{foata-schutz-eulerien}).  Note that $\phi$ takes  descents `verbatim' to excedances, in the sense that if  $j$ immediately precedes $i$ in  $\sig$ and $j>i$, so that $\ldots ji\ldots$ forms a descent in $\sig$, then $\tau=\phi(\sig)$ has $\tau(i)=j$, so that $i$ is an excedance in $\tau$.

\begin{corollary}\label{cor-des-321}
Setting  ~$s=qx, ~~ p=x$, and all other parameters to 1, we obtain
$$
\cf(z)=\sum_{n\geq 0}\sum_{\sigma\in \clsn} x^{\des(\sigma)}q^{\occ_{321}(\sigma)} z^n,
$$ 
where $\occ_{321}$ is the number of occurrences of the consecutive pattern $321$ and $\des$ denotes the number of descents.
\end{corollary}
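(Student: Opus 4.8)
The plan is to use Theorem~\ref{thm-main} to turn the claimed generating-function identity into a finite combinatorial identity on each $\clsn$, and then to prove that identity by transporting it through the bijection $\phi$ of Lemma~\ref{bij-fond}.

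First I would carry out the specialization. Substituting $a=b=c=d=f=g=h=\ell=r=t=u=w=1$, $s=qx$ and $p=x$ into the monomial $\wtprod$ of Theorem~\ref{thm-main} kills every factor except $p^{\exc(\sig)-\lexc(\sig)}s^{\lexc(\sig)}=x^{\exc(\sig)-\lexc(\sig)}(qx)^{\lexc(\sig)}=x^{\exc(\sig)}q^{\lexc(\sig)}$, so that $\cf(z)=\sum_{n\geq0}\sum_{\sig\in\clsn}x^{\exc(\sig)}q^{\lexc(\sig)}z^n$. It then remains to prove, for every $n$, that $\sum_{\sig\in\clsn}x^{\exc(\sig)}q^{\lexc(\sig)}=\sum_{\sig\in\clsn}x^{\des(\sig)}q^{\occ_{321}(\sig)}$, and since $\phi$ is a bijection on $\clsn$ this will follow from the two statistic identities $\exc(\phi(\sig))=\des(\sig)$ and $\lexc(\phi(\sig))=\occ_{321}(\sig)$.

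To establish these I would analyse the two rules defining $\phi$, writing $\sig=a_1a_2\cdots a_n$ with $a_0=0$ and $\tau=\phi(\sig)$. The key observation is that rule~(2), applied to a right-to-left minimum $a_i$ of $\sig$, sends it to a value that is at most the letter occupying the target position (the letters strictly between the chosen $a_k<a_i$ and position $i$ all exceed $a_i$), so rule~(2) produces only fixed points and anti-excedances of $\tau$; whereas rule~(1) sets $\tau(a_{i+1})=a_i$, which is an excedance precisely when $a_i>a_{i+1}$, i.e.\ when $i$ is a descent of $\sig$. This identifies the excedances of $\tau$ with the (pairwise distinct) descent bottoms of $\sig$, giving $\exc(\phi(\sig))=\des(\sig)$ — the forward half being exactly the ``verbatim'' remark after Lemma~\ref{bij-fond}. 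For the linked excedances I would use that $i$ is a linked excedance of $\tau$ iff both $i$ and $\tau^{-1}(i)$ are excedances of $\tau$ (the remark after Definition~\ref{def-invstats}). When $i$ is not a right-to-left minimum, $\tau^{-1}(i)$ is, by rule~(1), the letter immediately following $i$ in $\sig$, and by the previous paragraph this is an excedance of $\tau$ exactly when that letter is smaller than $i$, i.e.\ when $i$ is a descent top of $\sig$; when $i$ is a right-to-left minimum, $\tau^{-1}(i)$ comes from rule~(2) and hence is not an excedance. So $\tau^{-1}(i)$ is an excedance of $\tau$ iff $i$ is a descent top of $\sig$, and therefore $i$ is a linked excedance of $\tau$ iff $i$ is simultaneously a descent bottom and a descent top of $\sig$, i.e.\ iff $i=a_{m+1}$ for some consecutive decreasing triple $a_m>a_{m+1}>a_{m+2}$. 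As distinct occurrences of the consecutive pattern $321$ have distinct middle letters, this yields $\lexc(\phi(\sig))=\occ_{321}(\sig)$, completing the argument.

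The step I expect to require the most care is the case analysis inside the definition of $\phi$ — in particular pinning down, via the convention $a_0=0$, that rule~(2) never manufactures an excedance, so that every excedance of $\tau$ (and hence every ``link'') is accounted for by a descent of $\sig$. Everything else is formal: once this dictionary is in place — descents of $\sig$ becoming excedances of $\tau$, and maximal decreasing consecutive runs of length at least three producing the linked excedances — the two statistic identities, and with them the corollary, follow at once.
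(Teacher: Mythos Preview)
Your proof is correct and follows essentially the same approach as the paper: specialize Theorem~\ref{thm-main} to reduce the claim to the joint equidistribution of $(\exc,\lexc)$ and $(\des,\occ_{321})$, then use the bijection $\phi$ of Lemma~\ref{bij-fond}, which sends descents ``verbatim'' to excedances and double descents to linked excedances. Your version is in fact more careful than the paper's --- you explicitly verify that rule~(2) never creates an excedance and hence that every linked excedance of $\phi(\sig)$ comes from a double descent of $\sig$, whereas the paper argues only the forward direction.
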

\begin{proof}
An occurrence of the pattern 321 is a \emph{double descent}, defined as an $i$ such that 
$\sig_{i-1}>\sig_{i}>\sig_{i+1}$.  Suppose a permutation $\sig$ has a double descent involving the letters $i,j,k$, appearing in that order, so $i>j>k$. Applying $\phi$ to $\sig$ will result in a permutation $\tau$ with $\tau(k)=j$ and  $\tau(j)=i$, which is precisely a linked excedance.  Since the parameter~$s$ carries linked excedances in \cf, each linked excedance contributes a factor of both~$x$ and $q$ here, whereas the non-linked excedances, carried by $p$ in \cf, contribute only an~$x$. 
\end{proof}

This connection between the occurrences of the pattern $321$ and the continued fraction \cf\ is fundamental. Indeed, each occurrence of the consecutive pattern $321$ corresponds, in the simple manner described above, to a linked excedance, which is one of the basic 
statistics carried by $\cf$. In an analogous manner, descents correspond to arbitrary 
excedances.\footnote{The double descents appearing in the proof of Corollary~\ref{cor-des-321} are related to the `double falls', whose distribution in the form of a continued fraction was given by Flajolet~\cite[Theorem 3A]{Flajolet1980}.  These are defined as our double descents, except that Flajolet's double falls count also a single descent at the end of a permutation, since he appends a 0 to a permutation when computing these.}

It is further worth noting that the bijection $\phi$, in addition to pairing linked excedances with double descents, pairs linked anti-excedances with double ascents.  Moreover, it is straightforward to show that if $\sig$ has no linked excedances, no linked anti-excedances and no fixed points, then $\phi(\sig)$ has no double descents or double ascents and also begins and ends with a descent (and thus must have even length).  In other words, $\phi(\sig)$ is then an \emph{alternating permutation} and these are known to be counted by the (even) Euler numbers when $n$ is even.  This bijection thus proves the following, since $s$, $t$ and $u$ carry, respectively, linked excedances, linked anti-excedances and fixed points:
\begin{corollary}\label{coro-alt-perms}
  The number of permutations in $\clsn$ with no fixed points, no linked excedances and no linked anti-excedances is the Euler number $E_n$ if $n$ is even, and 0 if $n$ is odd. The generating function for the even Euler numbers is therefore obtained from $\cf$ by setting $s=t=u=0$ and all other parameters to 1.
\end{corollary}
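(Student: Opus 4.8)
The plan is to read the claimed specialization off Theorem~\ref{thm-main} and then identify the resulting coefficient sequence. Put $s=t=u=0$ and all other parameters equal to $1$ in \eqref{eq-comb-main}. The factors $s^{\lexc(\sigma)}$, $t^{\laexc(\sigma)}$, $u^{\fp(\sigma)}$ vanish unless $\lexc(\sigma)=\laexc(\sigma)=\fp(\sigma)=0$, and when these three statistics vanish so do $\ile(\sigma),\nile(\sigma)$ (each forces a linked excedance), $\ilae(\sigma),\nilae(\sigma)$ (each forces a linked anti-excedance) and $\iefp(\sigma)$ (this forces a fixed point); with every remaining parameter equal to $1$, the surviving weight is then exactly $z^n$ while all other terms are killed. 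Hence
\[
\cf(z)\Big|_{s=t=u=0,\ \mathrm{rest}=1}\;=\;\sum_{n\ge 0}c_n\,z^n,
\]
where $c_n:=\#\{\sigma\in\clsn\mid \lexc(\sigma)=\laexc(\sigma)=\fp(\sigma)=0\}$, and it remains to show $c_n=E_n$ for $n$ even and $c_n=0$ for $n$ odd.

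For this I would invoke the bijection $\phi$ of Lemma~\ref{bij-fond} together with the properties recorded in the discussion preceding the corollary: $\phi$ pairs linked excedances with double descents, pairs linked anti-excedances with double ascents, and controls the fixed points along with the behaviour at the two ends of the one-line word. The target statement is that $\phi$ restricts to a bijection between $\{\sigma\in\clsn\mid \lexc(\sigma)=\laexc(\sigma)=\fp(\sigma)=0\}$ and the set of permutations of $[n]$ with no double descent, no double ascent, and beginning and ending with a descent. Such a permutation must alternate as descent, ascent, descent, $\ldots$; beginning and ending with a descent then forces even length, and for even $n$ this set is exactly the set of down-up alternating permutations of $[n]$, of which there are $E_n$ (equivalently, via the complementation $i\mapsto n+1-i$, the up-down ones --- the classical zigzag count). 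For odd $n$ the set is empty, since a permutation of odd length beginning with a descent and having no double descent or ascent must end with an ascent. This gives $c_n=E_n$ for even $n$ and $c_n=0$ for odd $n$, which is the first assertion; the statement about the generating function of the even Euler numbers is then precisely the displayed specialization.

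The one step requiring genuine care is the bijective claim of the second paragraph, and in particular matching the absence of fixed points together with the ``begins and ends with a descent'' condition: this means tracking the $a_0=0$ convention and the two rules of Lemma~\ref{bij-fond} both on consecutive triples of positions and at the first and last positions of the word --- a finite, local case analysis in the same spirit as the proof of Corollary~\ref{cor-des-321}. The remaining ingredient, that alternating permutations of $\clsn$ are enumerated by the Euler number $E_n$, is classical. Alternatively, one may bypass the bijection altogether: after the specialization above, the continued fraction \eqref{eq-main} has $\alpha_n\equiv 0$ and $\beta_n=pr\,[n]_{c,d}\,[n]_{h,\ell}=n^2$ (by \eqref{eq-alpha-beta}), and this is the classical Jacobi-type continued fraction whose coefficient sequence is $E_n$ for $n$ even and $0$ for $n$ odd (see, e.g., \cite{Flajolet1980}); combined with the displayed identity $\cf(z)|_{\ldots}=\sum_n c_n z^n$, this again yields $c_n=E_n$ for even $n$ and $c_n=0$ for odd $n$, with no boundary cases to check.
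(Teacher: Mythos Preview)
Your main argument is essentially the paper's: specialize Theorem~\ref{thm-main} at $s=t=u=0$ to isolate the permutations with no fixed points, linked excedances, or linked anti-excedances, then invoke the bijection $\phi$ of Lemma~\ref{bij-fond} to identify these with alternating permutations of even length, which are counted by $E_n$. The paper records exactly this reasoning in the paragraph immediately preceding the corollary (the direction of $\phi$ is a bit loose in both accounts, but the content is the same).

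Your third paragraph, however, adds a genuinely different route: after the specialization one has $\alpha_n=0$, $\beta_n=n^2$ in \eqref{eq-alpha-beta}, and the resulting $J$-fraction is classically that of the secant numbers $(E_{2n})$ interleaved with zeros, so the identification follows without any bijective bookkeeping. The paper does not give this argument in the proof (though the specialization appears in Table~\ref{table-measures} with a reference). This shortcut is cleaner and avoids the boundary case analysis you flag as ``the one step requiring genuine care''; on the other hand, the bijective route explains \emph{why} the answer is the Euler numbers at the level of objects, which is the point the paper is making in this passage.
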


\begin{remark}
  If we also set $h=0$ in Corollary~\ref{coro-alt-perms} the corresponding alternating permutations correspond exactly to perfect matchings.  Namely, $h$ carries inversions among non-linked anti-excedances (and thus among all anti-excedances when there are no linked anti-excedances).  But the anti-excedances of $\sig$ are the `descent bottoms' of $\phi(\sig)$, which are the letters in even-numbered places in~$\sig$.  An example is the alternating permutation $71428365$, and this is the unique such permutation corresponding to the perfect matching $(1,7)-(2,4)-(3,8)-(5,6)$.
\end{remark}

The continued fraction $\cf$ also encodes the number of occurrences of the vincular pattern $2\dd31$.  This distribution was already determined by Claesson and Mansour~\cite[Thm. 22]{cla-mans-count} who gave a continued fraction for the joint distribution with the number of descents, the number of ascents and of the pattern $31\dd2$.  At present, we recover from \cf\ the following bivariate generating function, the proof following by a 
simple manipulation of the continued fraction in Theorem 10 in \cite{ClarkeSteingrimssonZeng}, noting that the statistic $\Res$ in that paper equals number of occurrences of $2\dd31$.
 
\begin{corollary}\label{cor-des-2-31}
With ~$b=d=g=\ell=q,~~ s=xq,~~ p=u=x$, and all other parameters set to~1, we have
$$
\cf(z)=\sum_{n\geq 0}\sum_{\sig\in \clsn} x^{\des(\sig)+1} q^{\occ_{2\sdd31}(\sig)}  z^n.
$$ 
\end{corollary}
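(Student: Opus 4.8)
The plan is to reduce the statement, via Theorem~\ref{thm-main}, to a comparison between two continued fractions: the specialisation of \cf\ prescribed above, and the continued fraction of Clarke, Steingr\'imsson and Zeng recording descents together with the statistic $\Res$. First I would substitute $b=d=g=\ell=q$, $s=xq$, $p=u=x$ and $a=c=f=h=r=t=w=1$ into \eqref{eq-alpha-beta}. Since $w=1$ forces $u\,w^n=x$, and since under this substitution $[n]_{a,b}=[n]_{f,g}=[n]_{c,d}=[n]_{h,\ell}=[n]_{1,q}$, the coefficients of the continued fraction \eqref{eq-main} collapse to
\[
\alpha_0=x,\qquad \alpha_n=x+(1+xq)\,[n]_{1,q}\quad(n\ge 1),\qquad \beta_n=x\,[n]_{1,q}^{2}\quad(n\ge 1).
\]
Equivalently, reading off the exponents in the summand of \eqref{eq-comb-main} --- the parameters $a,c,f,h,w$ disappear, each of $p,s,u$ contributes one factor of $x$, and each of $b,d,g,\ell$ one factor of $q$ --- shows that this specialisation of \cf\ equals $\sum_{n\ge0}\sum_{\sig\in\clsn}x^{\exc(\sig)+\fp(\sig)}q^{\nie(\sig)+\niae(\sig)+\lexc(\sig)}z^n$. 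So on the \cf\ side the exponent of $x$ is the number $\wex(\sig)=\exc(\sig)+\fp(\sig)$ of weak excedances, and the shift from $\des(\sig)$ to $\des(\sig)+1$ in the claim corresponds to the classical identity $\sum_{\sig\in\clsn}t^{\wex(\sig)}=\sum_{\sig\in\clsn}t^{\des(\sig)+1}$; at the level of the continued fraction this is already visible in $\alpha_0=x$.

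Next I would invoke Theorem~10 of \cite{ClarkeSteingrimssonZeng}, which gives an explicit continued fraction for the generating function over $\clsn$ jointly recording the numbers of descents and ascents, $\occ_{31\sdd2}$, and the statistic $\Res$; as noted in the discussion preceding this corollary, $\Res=\occ_{2\sdd31}$. Setting the variables marking ascents and $\occ_{31\sdd2}$ equal to $1$ and retaining the descent variable (which I would call $x$) and the $\Res$ variable ($q$), together with the normalisation that produces the exponent $\des(\sig)+1$, their continued fraction acquires coefficients $\widetilde\alpha_n,\widetilde\beta_n$ which can be read off directly. The corollary then follows by checking the elementary identities $\alpha_n=\widetilde\alpha_n$ and $\beta_n=\widetilde\beta_n$ for all $n\ge0$, and appealing to the uniqueness of the continued fraction expansion of a formal power series --- equivalently, to the Flajolet correspondence \cite{Flajolet1980} that already underlies the passage from Theorem~\ref{thm-bij} to Theorem~\ref{thm-main}.

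The substantive point is bookkeeping rather than computation: \cite{ClarkeSteingrimssonZeng} may state Theorem~10 in terms of ascents rather than descents, and may normalise the `extra' descent differently, so one must track the corresponding reflection and index shift when specialising their continued fraction; likewise the identity $\Res=\occ_{2\sdd31}$ must be extracted from the precise definition of $\Res$ there. Once these identifications are pinned down, matching $\alpha_n=x+(1+xq)[n]_{1,q}$ and $\beta_n=x[n]_{1,q}^{2}$ against the specialised Clarke--Steingr\'imsson--Zeng coefficients is immediate. (Alternatively, one could argue directly from Theorem~\ref{thm-main} by producing a bijection $\psi\colon\clsn\to\clsn$ with $\exc(\psi(\sig))+\fp(\psi(\sig))=\des(\sig)+1$ and $\nie(\psi(\sig))+\niae(\psi(\sig))+\lexc(\psi(\sig))=\occ_{2\sdd31}(\sig)$ --- a variant of the fundamental transformation $\phi$ of Lemma~\ref{bij-fond} --- but routing through the already-established continued fraction is shorter.)
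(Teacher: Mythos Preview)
Your proposal is correct and follows essentially the same route as the paper: specialise the parameters in \eqref{eq-alpha-beta}, then match the resulting $\alpha_n,\beta_n$ against the continued fraction in \cite[Thm.~10]{ClarkeSteingrimssonZeng} using $\Res=\occ_{2\sdd31}$. Your side remark that the specialisation of \cf\ records $(\wex,\nie+\niae+\lexc)=(\wex,\cross)$ is exactly the ``more illuminating'' alternative the paper describes immediately after the corollary; just be careful that the description you give of the statistics in \cite[Thm.~10]{ClarkeSteingrimssonZeng} (descents, ascents, $\occ_{31\sdd2}$) actually belongs to Claesson--Mansour \cite[Thm.~22]{cla-mans-count}, though this does not affect your argument.
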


There is a more illuminating way of proving Corollary~\ref{cor-des-2-31}, which shows how 
this distribution arises naturally from $\cf$.  Namely, it was shown in ~\cite[Thm.~24]{stein-williams} that the distribution of $(\des+1,\occ_{2\dd31})$ equals that of $(\wex,\cross)$ on $\clsn$, where $\wex(\sig)$ is the number of \emph{weak excedances} in $\sig$, that is, $i$ such that $\sig(i)\ge i$, which is equal to $\exc(\sig)+\fp(\sig)$.  The number of \emph{crossings} $\cross(\sig)$ was defined in~\cite[Def.~1]{Corteel2007} (see~\cite[Def.~13]{stein-williams}), and a straightforward comparison shows that it equals $\nie(\sig)+\niae(\sig)+\lexc(\sig)$, leading to the parameter assignments in the corollary.  It is interesting to note that this distribution was in~\cite[Thm.~5]{kasraoui-stanton-zeng-salam-chihara-laguerre} shown to coincide with 
the \emph{linearization coefficients of the $q$-Laguerre polynomials}.

Overall, the continued fraction $\cf$ encodes a variety of combinatorial objects associated to permutation patterns and permutation statistics. As such, it allows us to recover and refine several key enumerative results in this area, while providing a unifying lens through which these can be viewed. In particular, the fact that \cf\ encodes all avoiders of patterns of length~3 (classical, vincular and consecutive) that are moment sequences gives us hope that the seemingly specialized questions in permutation patterns may be more effectively approached as instances of a more general problem.

\subsection{Generalization to the signed and colored permutations}\label{subsec-hyperocta}

The continued fraction $\cf$ enumerates elements of the symmetric groups according to 
the fourteen combinatorial statistics described in Definition~\ref{def-invstats}. By rather simple substitutions of parameters, however, we obtain the distribution of analogous statistics on \emph{$k$-colored permutations}, which are permutations on $n$ letters, where each letter is assigned an integer (color) from $\{0,1,\ldots,k-1\}$. The case $k=2$ specializes to the hyperoctahedral group of signed permutations.  More generally, the colors of the letters factor into the definitions of descents and various other classical permutation statistics, as first defined in~\cite{es-indexed} for general $k$ and outlined at present.

\begin{definition}\label{def-colored-perms}  
  The set of \emph{$k$-colored} permutations of $[n]$, denoted $\snk$, consists of all pairs $(\sig,c)$ where $\sig=a_1a_2\ldots a_n$ is  a permutation of $[n]$ and $c=c_1c_2\ldots c_n$ where each $c_i$ is an element of $\{0,1,\ldots,k-1\}$.  We refer to $c_i$ as the color of the letter $a_i$.  We define a total order on the pairs $(a_i,c_i)$ by setting  $(a_i,c_i)>(a_j,c_j)$ if $c_i>c_j$, or $c_i=c_j$ and $a_i>a_j$.  Moreover, when needed for definitions of statistics, we declare $a_{n+1}=n+1$ and $c_{n+1}=0$.
\end{definition}

We now define five statistics on $\snk$, following~\cite{es-indexed}:
\begin{definition}\label{def-colperm-stats}
  Let $P=(a_1a_2\ldots a_n, c_1c_2\ldots c_n)$ be a $k$-colored permutation in $\snk$:
  \begin{enumerate}
  \item A \emph{descent} in $P$ is an $i\le n$ such that $(a_i, c_i)>(a_{i+1},c_{i+1})$. The number of descents in $P$ is denoted $\des(P)$.
  \item An \emph{excedance} in $P$ is an $i\le n$ such that $a_i>i$, or $a_i=i$ and $c_i>0$. The number of excedances in $P$ is denoted $\exce(P)$.
  \item An \emph{anti-excedance} in $P$ is an $i\le n$ such that $a_i<i$. The number of anti-excedances in $P$ is denoted $\aexc(P)$.
  \item An \emph{inversion} in $P$ is a pair $(i,j)$ such that $i<j\le n+1$ and $(a_i, c_i)>(a_j,c_j)$.  The number of inversions in $P$ is denoted $\inv(P)$.
  \item A \emph{fixed point} in $P$ is an $i\le n$ such that $a_i=i$ and $c_i=0$. The number of fixed points in $P$ is denoted $\fix(P)$.
  \end{enumerate}
\end{definition}

Note that in contrast to the symmetric group ($k=1$), $n$ is a descent in $\sig=(\pi,c)\in\snk$ when $a_n$ has a positive color ($c_n>0$), and $n$ is an excedance when $a_n=n$ and $c_n>0$. Anti-excedances, however, coincide with anti-excedances of $\pi$. Also, any $i\le n$ forms an inversion with $(n+1)$ if and only if $c_i>0$. Moreover, a fixed point requires $a_i=i$ and $c_i=0$, which matches the definition of excedances including fixed points $i$ of $\pi$ with $c_i>0$.

The best known case of colored permutations is that of the signed permutations ($k=2$), frequently viewed as elements of the hyperoctahedral groups. The signed permutations have been extensively studied from many different perspectives, and most statistics on the symmetric groups have been generalized to these and analogous results obtained about the distributions of such statistics.

The colored permutations can be seen as elements of certain unitary groups generated by 
reflections (see~\cite{shephard-unitary-groups}) that coincide with the wreath product of the symmetric group $\clsn$ with the cyclic group~$\ZZ_k$.  Some of the classical permutation statistics were generalized to these colored permutations and many of their properties elicited in \cite{es-indexed}, and in several subsequent papers by other authors (see~\cite{bagno-garber-mansour-color-perms} for some references).

We start by showing that $\cf$ encodes the Eulerian numbers of type B, that is, the descent statistic on the signed permutations, prior to generalizing the result to all colored permutations.  We do this for excedances --- which were shown in~\cite{es-indexed} to be equidistributed with descents --- as part of a joint distribution with anti-excedances and fixed points.

In~\cite[Thm. 4.2]{foata-schutz-eulerien}, Foata and Sch\"utzenberger gave an exponential generating function for the joint distribution of $(\exc,\fix)$ on the symmetric group.  We now give this joint distribution over the $k$-colored permutations, incorporating also anti-excedances.  
\begin{corollary}\label{thm-exc-fix}
  By setting $s=p=kx,~t=r=ky,~u=(k-1)x+q$ in \cf, and all other parameters to~1, 
\cf\ becomes the generating function of the joint distribution of excedances and fixed points on the colored permutations $\snk$, that is, 
$$
\cf(z) ~=~ \sum_{n\geq 0}\sum_{\sig\in\snk}x^{\exc(\sig)}y^{\aexc(\sig)} q^{\fix(\sig)} z^n.
$$
\end{corollary}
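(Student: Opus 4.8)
The plan is to derive this from Theorem~\ref{thm-main} by tracking how the fourteen symmetric-group statistics aggregate, after the parameter substitution, into the three colored-permutation statistics $\exc$, $\aexc$, $\fix$ on $\snk$. First I would record the effect of the substitution $s=p=kx$, $t=r=ky$, $u=(k-1)x+q$, and all remaining parameters equal to $1$, on the weight \wtprod\ attached to a permutation $\sigma\in\clsn$ in Theorem~\ref{thm-main}. Since $a=b=c=d=f=g=h=\ell=w=1$, all the inversion/non-inversion refinements collapse: each excedance of $\sigma$ contributes the $p$-part or the $s$-part of its height-label, each anti-excedance the $r$-part or the $t$-part, and each fixed point contributes $u$ together with $w^{\iefp}=1$. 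More precisely, working through Definition~\ref{def-paths}, an upstep (which corresponds to a non-linked excedance) carries $p=kx$, a level step of the "$a,b$" type (a linked excedance) carries $s=kx$, a level step of the "$f,g$" type (a linked anti-excedance) carries $t=ky$, a downstep (a non-linked anti-excedance) carries $r=ky$, and a level step of the "$u,w$" type (a fixed point) carries $u=(k-1)x+q$. Hence the weight of $\sigma\in\clsn$ becomes
\[
(kx)^{\exc(\sigma)}\,(ky)^{\aexc(\sigma)}\,\bigl((k-1)x+q\bigr)^{\fp(\sigma)}.
\]

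Next I would expand the fixed-point factor binomially: $\bigl((k-1)x+q\bigr)^{\fp(\sigma)}=\sum_{S}\,((k-1)x)^{|S|}q^{\fp(\sigma)-|S|}$, where $S$ ranges over subsets of the $\fp(\sigma)$ fixed points of $\sigma$. The key combinatorial step is then to interpret each term in the resulting sum as a colored permutation in $\snk$. Given $\sigma\in\clsn$ together with a choice of subset $S$ of its fixed points, I build a coloring $c$ as follows: the $k$ factors in each $kx$ (for an excedance), each $ky$ (for an anti-excedance), and each $(k-1)x$ (for a fixed point in $S$) record a choice of color in $\{0,1,\ldots,k-1\}$ — with the proviso that a fixed point in $S$ receives a color in $\{1,\ldots,k-1\}$ (that is the "$k-1$"), while the fixed points not in $S$ receive color $0$. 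Checking Definition~\ref{def-colored-perms} and Definition~\ref{def-colperm-stats}, this is exactly a bijection between pairs $(\sigma,S)$ with $|S|$-many distinguished colorable choices per excedance/anti-excedance/distinguished-fixed-point and $k$-colored permutations $(\sigma,c)\in\snk$: the excedances of $(\sigma,c)$ are the excedances of $\sigma$ together with the color-$0$-lifted fixed points carrying positive color (accounted for by $(k-1)x$), the anti-excedances of $(\sigma,c)$ coincide with those of $\sigma$, and the fixed points of $(\sigma,c)$ are precisely the fixed points of $\sigma$ with color $0$, i.e.\ those outside $S$ (accounted for by $q$). Summing the weights gives $x^{\exc((\sigma,c))}y^{\aexc((\sigma,c))}q^{\fix((\sigma,c))}$, and summing over all $(\sigma,c)\in\snk$ yields the claimed identity.

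I expect the only genuinely delicate point to be the bookkeeping for fixed points: one must check that the "$(k-1)x+q$" split correctly mirrors the dichotomy in Definition~\ref{def-colperm-stats} between a fixed-point position $i$ with $a_i=i$, $c_i>0$ (which is an \emph{excedance} of the colored permutation, hence should be weighted by $x$ and join the $\exc$ count) and one with $c_i=0$ (a genuine fixed point, weighted by $q$). Once that is matched against the roles of $u$, $w$, $s$, $p$ in Definition~\ref{def-paths} and Theorem~\ref{thm-main}, the factor-of-$k$ accounting for excedances and anti-excedances is routine, since any color in $\{0,\ldots,k-1\}$ is allowed there. A cleaner alternative I would also mention: recall from Definition~\ref{def-colored-perms} that $\snk$ is the wreath product $\clsn\wr\ZZ_k$, so a colored permutation is a permutation with an independent $\ZZ_k$-label on each letter, and each letter is an excedance, anti-excedance, or fixed point of the colored permutation according to the rule above; reading off this decomposition directly recovers the same generating-function identity, and then the corollary is just the specialization of Theorem~\ref{thm-main} that produces the polynomial $(kx)^{\exc}(ky)^{\aexc}((k-1)x+q)^{\fp}$ summed over $\clsn$.
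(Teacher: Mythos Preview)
Your proposal is correct and follows essentially the same approach as the paper: both arguments specialize Theorem~\ref{thm-main} so that the weight of $\sigma\in\clsn$ becomes $(kx)^{\exc(\sigma)}(ky)^{\aexc(\sigma)}((k-1)x+q)^{\fp(\sigma)}$, and then interpret the factor $k$ as a free choice of color at each excedance and anti-excedance, while the split $(k-1)x+q$ records whether a fixed point of $\sigma$ receives a positive color (becoming a colored-permutation excedance) or color $0$ (remaining a fixed point). The paper's proof is simply a terser version of your ``cleaner alternative,'' omitting the intermediate binomial expansion over subsets $S$.
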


\begin{proof}
  Setting $s=p=kx$ lets $x$ carry linked excedances, since, letting $\sig=(\pi,c)$, an excedance~$i$ in $\pi$ is an excedance in $\sig$ regardless of $c_i$, the color.  The same is true for non-linked excedances carried by $p$, and likewise for anti-excedances, carried by $t$ and $r$.  The only complication is that a fixed point $i$ in $\pi$ is a fixed point of $\sig$ only if $c_i=0$, and is otherwise an excedance.  This is reflected in~$u$, which carries fixed points in the case of the symmetric group $(k=1)$. Since $u$ is now the sum of~$q$ and $(k-1)x$, that is, $q$  for the fixed point case when $c_i=0$ and  $(k-1)x$ for the remaining $k-1$ choices for~$c_i$, we recover excedances at $i$.
\end{proof}
Although we present Corollary~\ref{thm-exc-fix} only in this simple form, it is 
straightforward to refine it to incorporate the obvious generalizations of linked vs. non-linked excedances and anti-excedances to colored permutations.

Setting $w=0$ in \cf\ gives $u\cd w^n=0$ for all $n\in\NN$, while $u\cd w^0=u$.  This allows us to obtain the distribution of the number of inversions on the symmetric group and, more generally, on the $k$-colored permutations, as follows:
\begin{corollary}\label{thm-inv}
  With 
$$
a=c=h=r=q, ~~~~ b=f=d=\ell=t=q^2, ~~~~ g=w=0, ~~~~ p=u=1, ~~~~ s=2q,
$$
the continued fraction $\cf$ recovers the generating function for the distribution of inversions over $\clsn$:
$$
\cf(z) ~=~ \sum_{n\geq 0}\sum_{\pi\in\clsn}q^{\inv(\pi)}z^n.
$$
If, in addition, we replace $z$ by $z((k-1)q+1)$, then $\cf$ gives the distribution of inversions over the $k$-colored permutations $\snk$ for $k>1$.
\end{corollary}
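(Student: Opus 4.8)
The plan is to reduce Corollary~\ref{thm-inv} to Theorem~\ref{thm-main} by choosing the fourteen parameters so that the monomial \wtprod\ attached to each $\sig\in\clsn$ collapses to $q^{\inv(\sig)}$, using the classical fact that $\inv$ decomposes according to the excedance/anti-excedance/fixed-point structure of $\sig$ together with the natural bookkeeping of inversions and (restricted) non-inversions that the statistics in Definition~\ref{def-invstats} already provide. First I would record the key combinatorial identity: for any $\sig\in\clsn$, the set of inverted pairs $(i,j)$ splits into those with both endpoints excedances, both anti-excedances, those pairing an excedance with a fixed point, and the remaining ``mixed'' inversions (excedance with anti-excedance, anti-excedance with fixed point, and fixed point with fixed point, the last being impossible). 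One checks that the statistics $\ie,\nie$ (and the distinction between a non-inversion and a \emph{restricted} non-inversion) account, after a short case analysis, for exactly the inversions among excedances and the inversions that a given excedance forms with smaller indices that are not themselves excedances; symmetrically $\iae,\niae$ handle the anti-excedance side, and $\iefp$ handles excedance--fixed point inversions. The point of setting $w=0$ is to kill the $u\cdot w^n$ contribution for $n\geq 1$ while keeping $u\cdot w^0=u=1$, so that fixed points contribute nothing on their own — consistent with a fixed point never being the larger element of an inversion except through $\iefp$.

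The main body of the argument is then a careful matching of exponents. With $p=u=1$ the excedance ``base'' parameter $p$ and the fixed-point parameter $u$ drop out; with $g=w=0$ the level-step parameter $g$ and $w$ drop out, forcing in Definition~\ref{def-paths} that the only surviving level-step labels are the $u\cd w^0=1$ label at height $0$-type steps and the $s\,a^i b^{k-1-i}$ and $t\,f^i g^{k-1-i}$ labels with $g=0$, i.e.\ $t\,f^{k-1}$ only; one has to verify this is exactly what is needed so that anti-excedances and their associated inversions get weighted correctly by powers of $q$. Concretely, I would substitute $a=c=h=r=q$, $b=f=d=\ell=t=q^2$, $g=w=0$, $p=u=1$, $s=2q$ into \csummand\ and show term by term that each excedance of $\sig$, whether linked or not, ultimately contributes $q^{(\text{number of smaller non-excedance indices to its left that it inverts})+1}$ in a way that telescopes over all excedances to $q^{(\text{inversions with at least one excedance endpoint})}$; the reason $s=2q$ rather than $q$ is that a linked excedance is simultaneously ``the large element of an inversion with its pre-image'' and contributes its own inversion count, so it must be weighted by $q\cdot q$ split as $s=2q$ in additive/exponent bookkeeping — here I would lean on the running example in Figure~\ref{fig:ex_bij} to calibrate signs and multiplicities. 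The anti-excedance side is handled symmetrically, which is why $b=f=d=\ell=t=q^2$ are all set equal, and $\iae,\niae$ versus their linked refinements play the role that $\ie,\nie$ and $\ile,\nile$ did on the excedance side. Finally $\iefp$, carried by $w$, would naively be killed by $w=0$, so I must double-check that in fact inversions between an excedance and a later fixed point are \emph{already} counted among the $q$-powers assigned to that excedance via $c=q$ (an unrestricted non-inversion versus restricted distinction), so no separate $w$-contribution is needed.

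For the colored case, the plan is to use the structural observation, made just before Definition~\ref{def-colored-perms} and in the paragraph after Definition~\ref{def-colperm-stats}, that a $k$-colored permutation $(\sig,c)\in\snk$ has $\inv(\sig,c)=\inv(\sig)+\#\{i: c_i>0\}\cdot(\text{each such }i\text{ inverts }n+1)+(\text{inversions created among equal-}\sig\text{-value pairs by colors})$; more precisely, relative to the underlying uncolored permutation, each of the $n$ positions independently contributes an inversion with the appended $(n+1,0)$ precisely when its color is positive, which multiplies the generating function of each $\sig$ by $(1+q+\cdots+q^{k-1})^{?}$ — and the clean way this manifests is exactly the substitution $z\mapsto z((k-1)q+1)$, since each letter is ``marked'' once by a factor $1+(k-1)\cdot(\text{something})$. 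I would verify that replacing $z$ by $z((k-1)q+1)$ multiplies $m_n=[z^n]\cf(z)$ by $((k-1)q+1)^n$, and then show combinatorially that $\sum_{(\sig,c)\in\snk}q^{\inv(\sig,c)} = ((k-1)q+1)^n \cdot (\text{doesn't quite work})$ — so the real step is to check that the per-letter color contribution to $\inv$, summed over all $k^n$ colorings, factors as $\prod_{i=1}^n(1+q^{a_i})$-type products that collapse to $((k-1)q+1)^n$ only after using the total order in Definition~\ref{def-colored-perms} carefully. \textbf{The main obstacle} I anticipate is precisely this last point: confirming that the multiplicative color factor is uniformly $(k-1)q+1$ per letter, independent of the position or the value $a_i$, which requires a genuine (if short) argument about how positive colors interact with the total order $(a_i,c_i)>(a_j,c_j)$ — a positive color on $a_i$ flips the comparison with \emph{every} position of strictly smaller color, not merely with $n+1$, and one must check these extra flips cancel in the aggregate sum over colorings so that only the ``$\times((k-1)q+1)$ per letter'' survives, reproducing the substitution claimed in the corollary.
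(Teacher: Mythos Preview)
Your approach for the uncolored case has a genuine gap: the specialization does \emph{not} make the weight $\wt(\sig)$ collapse to $q^{\inv(\sig)}$ termwise. Two symptoms make this clear. First, $s=2q$ gives $s^{\lexc(\sig)}=(2q)^{\lexc(\sig)}=2^{\lexc(\sig)}q^{\lexc(\sig)}$, so any permutation with a linked excedance picks up a genuine power of $2$; your ``$s=2q$ because a linked excedance contributes $q\cdot q$'' confuses an additive $2q$ with a multiplicative $q^2$. Second, $g=0$ and $w=0$ annihilate every permutation with $\nilae(\sig)>0$ or $\iefp(\sig)>0$. For instance, in $\mathcal S_3$ one finds $\wt(231)=p\,r\,s=2q^2$ (not $q^2$), $\wt(312)=p\,r\,t=q^3$ (not $q^2$), and $\wt(321)=p\,r\,u\,w=0$ (not $q^3$); the miracle is that the \emph{sum} over $\mathcal S_3$ still equals $1+2q+2q^2+q^3$. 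So the equality in the corollary holds only at the level of generating functions, not permutation by permutation, and no amount of case analysis on $\ie,\nie,\iae,\ldots$ will produce the termwise identity you are aiming for.

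The paper's proof is accordingly quite different and much shorter: it does not attempt any such decomposition, but instead computes $\alpha_n$ and $\beta_n$ from \eqref{eq-alpha-beta} under the stated specialization and checks that they coincide with the coefficients in Biane's known continued fraction for $\sum_{n\ge0}\sum_{\pi\in\clsn}q^{\inv(\pi)}z^n$. That is the entire argument for the first claim. For the colored case, the paper again argues at the generating-function level: replacing $z$ by $z((k-1)q+1)$ multiplies the coefficient of $z^n$ by $((k-1)q+1)^n$, and the paper interprets this factor as each of the $n$ letters independently either keeping color $0$ (contributing $1$) or taking one of $k-1$ positive colors (each contributing an extra $q$ from the forced inversion with the appended $(n+1,0)$). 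Your worry that positive colors create additional inversions among positions $i,j\le n$ is well-founded --- they do, and the identity does not hold $\pi$-by-$\pi$ --- but the aggregate over all $(\pi,c)\in\snk$ still equals $((k-1)q+1)^n\sum_{\pi}q^{\inv(\pi)}$, which is what the substitution asserts.
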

\begin{proof}
In \cite[Equation 1.1]{biane-exc-inv-heine}, Biane gives a continued fraction for the distribution of the number of inversions on $\clsn$.  It is 
a straightforward calculation to show that Biane's continued fraction coincides with our $\cf$ with parameters set as in the statement of this proposition.

Replacing $z$ by $z\cdot((k-1)q+1)$ is equivalent to attaching to each letter in a permutation any one of $(k-1)$ different $q$s (which we don't need to distinguish between) 
or nothing, the latter corresponding to the unadorned $z$ in the case of the symmetric 
group $(k=1)$. With $(\pi,c)=\sig\in\snk$, this corresponds to the fact that only one choice of $c_i$ (namely 0) for a letter $a_i=\pi(i)$ with $i\le n$ contributes a non-inversion with the appended letter $a_{n+1}=n+1$ (which has color 0). Each of the other $(k-1)$ choices for $c_i$ contributes 1 to the total number of inversions for that permutation.
\end{proof}

Moreover, in the symmetric group, the above can be further refined to a joint distribution of inversions and excedances:

\begin{corollary}\label{thm-exc-inv}
Setting $p=x$, $s=(1+x)q$, and all other parameters as in Corollary~\ref{thm-inv}, yields
$$
\cf(z) ~=~ \sum_{n\geq 0}\sum_{\pi\in\clsn}x^{\exce(\pi)}q^{\inv\pi}z^n.
$$
\end{corollary}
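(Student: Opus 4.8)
The plan is to prove this exactly as Corollary~\ref{thm-inv} was proved, but carrying the extra variable~$x$ through the computation. Biane's work~\cite{biane-exc-inv-heine} gives, more generally than the continued fraction for inversions invoked in Corollary~\ref{thm-inv}, a continued fraction of the form~\eqref{eq-main} for the joint distribution of excedances and inversions on $\clsn$, the inversions-only continued fraction being its $x=1$ specialization. So the task is to verify that $\cf$, under the parameter values in the statement, reproduces Biane's bivariate continued fraction.

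First I would compute the coefficients $\alpha_n$ and $\beta_n$ of~\eqref{eq-alpha-beta} under the substitution $a=c=h=r=q$, $b=d=f=\ell=t=q^2$, $g=w=0$, $u=1$, $p=x$, $s=(1+x)q$. Here one uses the elementary identities $[n]_{q,q^2}=q^{n-1}[n]_q$ (with $[n]_q:=1+q+\cdots+q^{n-1}$) and $[n]_{q^2,0}=q^{2(n-1)}$ for $n\geq1$, together with $[0]_{x,y}=0$; the term $u\cd w^n$ drops out for $n\geq1$ since $w=0$, leaving $\alpha_0=u=1$. This yields
$$
\beta_n = p\,r\,[n]_{c,d}\,[n]_{h,\ell} = x\,q^{2n-1}\,[n]_q^2,
\qquad
\alpha_n = s\,[n]_{a,b}+t\,[n]_{f,g} = (1+x)\,q^n[n]_q + q^{2n} = x\,q^n[n]_q + q^n[n+1]_q \quad (n\geq 1).
$$
Next I would invoke Biane~\cite{biane-exc-inv-heine}: these are precisely the coefficients in the continued fraction expansion~\eqref{eq-main} of $\sum_{n\geq 0}\bigl(\sum_{\pi\in\clsn}x^{\exce(\pi)}q^{\inv(\pi)}\bigr)z^n$ (setting $x=1$ collapses them to the coefficients $2q^n[n]_q+q^{2n}$ and $q^{2n-1}[n]_q^2$ underlying Corollary~\ref{thm-inv}). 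Theorem~\ref{thm-main}, via Flajolet's correspondence, then identifies $\cf$ under this specialization with that generating function, which is exactly the asserted identity.

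The main obstacle is not a genuine difficulty but a matter of bookkeeping: one must locate the continued fraction of~\cite{biane-exc-inv-heine} in precisely the form~\eqref{eq-main} (rather than in an equivalent form obtained by a contraction or an equivalence transformation), and one must treat the degenerate substitutions $g=w=0$ with care, noting that $[n]_{q^2,0}$ retains only its leading term $q^{2(n-1)}$ and that $u\cd w^n$ survives only at $n=0$. Note that a naive ``reweight each Motzkin path'' argument does \emph{not} work: changing $s=2q$ to $(1+x)q$ does not multiply the weight of each individual permutation by a power of $x$, so the identity is genuinely one of continued fractions (equivalently, of moment sequences) rather than a term-by-term bijection. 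If one wished to avoid citing Biane, the same conclusion would follow by verifying directly that the three-term recurrence~\eqref{eq-OP} with the $\alpha_n,\beta_n$ displayed above defines the orthogonal polynomials whose $n$th moment is $\sum_{\pi\in\clsn}x^{\exce(\pi)}q^{\inv(\pi)}$, for instance through a Laguerre-history encoding of permutations as in~\cite{fravie}; this route is correct but more laborious than the direct comparison.
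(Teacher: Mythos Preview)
Your proposal is correct and follows essentially the same route as the paper: compute $\alpha_n$ and $\beta_n$ under the given specialization and match them against a known continued fraction for the joint $(\exce,\inv)$ distribution. Your arithmetic is correct, including the handling of $w=0$ (so $u\cdot w^n$ survives only at $n=0$, giving $\alpha_0=1$, which is consistent with $q^0(x[0]_q+[1]_q)=1$) and of $[n]_{q^2,0}=q^{2(n-1)}$.

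The only substantive difference is the external reference: the paper invokes \cite[Thm.~10]{ClarkeSteingrimssonZeng}, observing that $\inv=\Edif+\Ine$ there, to obtain $\widetilde\alpha_n=q^n(x[n]_q+[n+1]_q)$ and $\widetilde\beta_n=xq^{2n-1}[n]_q^2$, whereas you appeal to Biane~\cite{biane-exc-inv-heine}. Both sources yield the same continued fraction, so this is a difference of citation rather than of method. Your side remark that changing $s$ from $2q$ to $(1+x)q$ is \emph{not} a term-by-term reweighting of permutations (so the identity must go through the continued fraction rather than the bijection $\eta$) is an accurate and worthwhile observation.
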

\begin{proof}
  In \cite[Thm. 10]{ClarkeSteingrimssonZeng} a continued fraction is given that captures the distribution of $(\exce,\inv)$ on the symmetric group, since $\inv$ is the sum of the statistics $\Edif$ and $\Ine$ in that paper.  This continued fraction, expressed in the form for our $\cf$, and setting $p=q$ to let $q$ carry $\inv$, has $\widetilde\alpha_n$ and $\widetilde\beta_n$ as follows:
  \begin{equation*}
\widetilde\alpha_n=q^n(x\cdot[n]_{q}+[n+1]_{q}),\quad \quad
\widetilde\beta_n=x\cdot q^{2n-1}([n]_{q})^2.
\end{equation*}
The result follows by a straightforward comparison to \eqref{eq-alpha-beta} with the above parameter choices.
\end{proof}

Whether the joint distribution of excedances and inversions over $\snk$ can be obtained 
from $\cf$ with appropriate parameter settings remains to be seen. Also worth investigating is whether other Euler-Mahonian pairs (see e.g.~\cite{shareshian-wachs-exc-maj} for examples) can be obtained from $\cf$, such as descents and the \emph{major index}, which is the sum of the descents in a permutation.

\subsection{The $k$-arrangements}\label{subsec-arrangements}

The fact that the parameter $u$ in $\cf$ carries fixed points in permutations suggests an obvious generalization.  We define a \emph{$k$-arrangement of $[n]$}, for any $k\in\NN_0$, to be a permutation of $[n]$ each of whose fixed points is colored with any of $k$ colors, with the convention that a 0-arrangement is a \emph{derangement} of $[n]$ (a permutation with no fixed points).  Letting $k=1$ recovers ordinary permutations
and $k=2$ corresponds to what previously have been called simply `arrangements' \cite{comtet-advanced}, which also coincide with Postnikov's definition of `decorated permutations'~\cite[Def. 13.3]{postnikov-webs}.  

Equivalently, a $k$-arrangement of $[n]$ is a choice of $k$ disjoint subsets (some possibly empty) of $[n]$ together with a derangement of the complement of their union.  An example of a 3-arrangement of $[7]$, where we denote the three available colors by $a,b,c$, is 6214573, with the fixed points 2,4,5 colored $c,a,a,$ respectively.  This could be presented as $6c1aa73$, or, more concisely, as $3c1aa42$, to present the derangement part as a derangement of $\{1,2,3,4\}$. (Note that the original values of the letters in the derangement part can
be recovered from their relative order and the set of fixed points, so the two are indeed equivalent.)  Formally:

\begin{definition}\label{def-arrangements}
Let $n\in\NN$. 
For a permutation $\pi\in\clsn$, denote by $\fixpts(\pi)$ the set of fixed points of $\pi$.
  \begin{enumerate}
  \item For any $k\in\NN$, a \emph{\karr\ of $[n]$} is a pair $\a=(\pi,\phi)$ where $\pi$ is a permutation of $[n]$  and $\phi:\fixpts(\pi)\ra[k]$ an arbitrary map.  A \emph{0-arrangement} of~$[n]$ is a derangement of~$[n]$.

  \item For any $k\in\NN_0$, the set of \karrs\ of $[n]$ is denoted $\ank$.  The set $\arr{k}{0}$, for any $k\ge0$, consists of the empty word.
  \end{enumerate}
\end{definition}

In what follows, when we apply the statistics on $\clsn$ in Definition~\ref{def-invstats} to $\a=(\pi,\phi)\in\ank$ we are applying them to $\pi$.  This yields the following generalization of Theorem~\ref{thm-main}.

\newcommand\col{\mathrm{col}}

\newcommand\bu{\ensuremath{\mathbf{u}}}
\begin{proposition}\label{prop-gf-arr}
  Let $\bu=u_1+u_2+\cdots+u_k$.  
Given $\a=(\pi,\phi)\in\ank$ let 
\begin{equation}\col(\a)=\prod_{i\in\fixpts(\pi)}u_{\phi(i)}.\end{equation}
Then, letting $\cfu$ be the continued fraction $\cf$ in Definition~\ref{def-gz} with $\bu$ substituted for $u$, we have
\begin{equation}\label{eq-gf-arr}
  \begin{array}{l l l}
\cfu(z)=&\displaystyle\sum_{n\geq 0}\sum_{\a\in\ank}&
\col(\a)\,\csummand
  \end{array}
\end{equation}
\end{proposition}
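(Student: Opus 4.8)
The plan is to reduce Proposition~\ref{prop-gf-arr} to Theorem~\ref{thm-main} via a combinatorial decomposition of $k$-arrangements, using the fact that forgetting the coloring of the fixed points is a $k^{\fp(\pi)}$-to-one map onto permutations when all colors are indistinguishable, but here refined by the multivariate weight $\col(\a)$. The key observation is that the only statistic in the list of fourteen that interacts with the $u$-parameter is the fixed-point count $\fp$ (and the closely related $\iefp$, but $w$ is untouched); all thirteen other statistics in Definition~\ref{def-invstats}, when applied to $\a=(\pi,\phi)$, depend only on $\pi$ by convention. So the right-hand side of \eqref{eq-gf-arr} factors as
\begin{equation*}
\sum_{n\geq 0}\sum_{\pi\in\clsn}\left(\sum_{\phi:\fixpts(\pi)\to[k]}\prod_{i\in\fixpts(\pi)}u_{\phi(i)}\right)\widetilde{W}(\pi)\,z^n,
\end{equation*}
where $\widetilde{W}(\pi)$ denotes the full monomial \csummand\ with the factor $u^{\fp(\sigma)}$ stripped out.

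Next I would evaluate the inner sum over colorings. Since the choices of $\phi(i)$ for distinct fixed points $i$ are independent, the sum over all maps $\phi:\fixpts(\pi)\to[k]$ of $\prod_{i}u_{\phi(i)}$ equals $\prod_{i\in\fixpts(\pi)}(u_1+u_2+\cdots+u_k)=\bu^{\fp(\pi)}$. Substituting this back shows that the right-hand side of \eqref{eq-gf-arr} equals
\begin{equation*}
\sum_{n\geq 0}\sum_{\pi\in\clsn}\bu^{\fp(\pi)}\,\widetilde{W}(\pi)\,z^n,
\end{equation*}
which is precisely the right-hand side of \eqref{eq-comb-main} in Theorem~\ref{thm-main} with the symbol $u$ replaced everywhere by $\bu$. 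By Theorem~\ref{thm-main}, this sum equals $\cf(z)$ with $u$ replaced by $\bu$, which is exactly $\cfu(z)$ by the definition of $\cfu$ given in the statement. This completes the argument.

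\textbf{Main obstacle.} There is no serious obstacle; the proposition is essentially a formal substitution once one notes the convention that the thirteen non-fixed-point statistics ignore the coloring $\phi$. The only point requiring care is bookkeeping: one must check that replacing $u$ by $\bu$ in Definition~\ref{def-gz} genuinely produces the continued fraction $\cfu$ referred to in the statement (it does, since $u$ appears in \eqref{eq-alpha-beta} only in the single term $u\cd w^n$ of $\alpha_n$, and the substitution is unambiguous), and that the $\iefp$-statistic, carried by $w$, is unaffected because $w$ is held fixed while only $u$ is replaced. One might alternatively prefer to give a path-level proof: extend Theorem~\ref{thm-bij}'s bijection $\eta$ to $\ank\to\motzn(\dots,\bu,\dots)$ by coloring each level step of the form $u\cd w^0$ (which corresponds precisely to a fixed point, cf.\ Figure~\ref{fig:ex_bij}) with the color $\phi(i)$ of the associated fixed point and weighting it $u_{\phi(i)}$ instead of $u$; summing over colorings recovers the $\bu$-labeled Motzkin paths of Definition~\ref{def-paths} with $\bu$ in place of $u$, and Flajolet's correspondence again yields $\cfu(z)$. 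Either route is routine; I would present the generating-function version as it is shorter.
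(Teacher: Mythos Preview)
Your main argument is correct and is essentially the paper's own proof, run in the reverse direction: the paper starts from $\cfu$, invokes Theorem~\ref{thm-main} to write $\bu^{\fp(\pi)}$ in place of $u^{\fp(\pi)}$, and then expands $\bu^{\fp(\pi)}$ into a sum of monomials indexed by colorings $\phi$, identifying each with $\col(\a)$; you start from the arrangement side, collapse the inner sum over $\phi$ to $\bu^{\fp(\pi)}$, and then invoke Theorem~\ref{thm-main}. One small slip in your alternative path-level sketch: the level steps corresponding to fixed points are those labeled $u\cdot w^i$ for \emph{any} $i$ (the exponent records $\iefp_i$), not only $u\cdot w^0$; but this does not affect your primary argument.
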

\begin{proof}
Note first that all the statistics in the exponents of parameters in~\eqref{eq-gf-arr} carry over directly to \karrs\ from Theorem~\ref{thm-main}, since they are applied only to the permutation $\pi$.

\newcommand\ms{\ensuremath{\mathsf{S}}}

Let $\mathsf{S}$ denote the summand in \eqref{eq-comb-main} (equivalently, the summand in~\eqref{eq-gf-arr} take away
$\col(\a)$). Replacing $u$ in $\cf$ by $\bu$ means replacing
$u^{\fp(\sig)}\ms$ by $\bu^{\fp(\sig)}\ms$ in $\cf$.  That is equivalent to
attaching $\bu^{\fp(\sig)}$ to each permutation $\sig\in\clsn$ (by Theorem~\ref{thm-main} and the underlying bijection described in Section~\ref{section-bijection}).  Expanding
$\bu^{\fp(\sig)}$ non-commutatively turns it into $k^{\fp(\sig)}$ monomials
in the $u_i$, each of degree $\fp(\sig)$.  Each such monomial corresponds in
a straightforward way to a coloring of the fixed points of $\sig$, where the
$i$th fixed point gets color $u_j$ if $u_j$ is the $i$th factor in the
corresponding monomial. Conversely, each coloring of fixed points in $\sig$
corresponds in the same simple way to one of the monomials in
$\bu^{\fp(\sig)}$, where the $i$th factor in such a monomial is the color $u_i$ of the $i$th fixed point.  But the colorings of fixed points in $\sig$ are represented precisely by the monomials $\col(\a)$.  Thus, replacing $u^{\fp(\sig)}$ by $\bu^{\fp(\sig)}$ in $\cf$, and hence in its expansion as in Theorem~\ref{thm-main}, corresponds to summing over all $\a\in\ank$ with a factor of $\col(\a)$ for each of them replacing $u^{\fp(\sig)}$.
\end{proof}

\begin{remark}
Since replacing  $u$ by $u_1+u_2+\cdots+u_k$ in \eqref{eq-alpha-beta} only affects the terms $\alpha_n$ and not $\beta_n$, the results in Sections~\ref{subsec-permutation-moment} and~\ref{subsec-hankel}, including Corollaries~\ref{cor-moments} and~\ref{coro-hankel-det}, are satisfied by $\cfu$.
\end{remark}
We now define two representations of \karrs, which give more insight into their structure. 
\begin{definition}\label{def-der-form}
  The \emph{derangement form} of an element of $\ank$ is a word $\a$ on the alphabet
$$
\{-k,-k+1,\ldots,-1,1,2,\ldots,n-K\},
$$
where $K$ is the number of negative letters in $\a$, the positive letters appear once each, and the restriction of $\a$ to its positive letters is a derangement of $[n-K]$, called the \emph{derangement part of~$\a$}.\\  
The \emph{permutation form} of an element of $\ank$ is a word $\a$ on the alphabet
$$
\{-k+1,\ldots,-1,1,2,\ldots,n-K\},
$$
where $K$ is the number of negative letters in $\a$, the positive letters appear once each, and the restriction of $\a$ to its positive letters is a permutation of $[n-K]$, called the \emph{permutation part of~$\a$}.
\end{definition}

Given a \karr\ $\a\in\ank$, expressed as a permutation $\pi=a_1a_2\ldots a_n$ together with a map $\phi:\fixpts(\pi)\ra[k]$ we obtain the derangement form of $\a$ by changing $a_i$ to $-\phi(i)$ for each fixed point~$i$ of~$\pi$ and then replacing the $j$-th largest positive letter left by $j$.  To get the permutation form of~$\a$ we do the same except that we leave fixed points $i$ with $\phi(i)=k$ intact before modifying the positive letters.  Each of these transformations is clearly invertible, justifying Definition~\ref{def-der-form}.

As an example, the 4-arrangement $\pi=6214573$ with $\phi(2)=4,\phi(4)=1,\phi(5)=4$ has derangement form 3\dd41\dd1\dd442, whose derangement part is 3142, and permutation form 521\dd1463, whose permutation part is 521463.  As a \karr\ for any $k>4$, however, the permutation form of $(\pi,\phi)$ equals the derangement form.

The numbers $A_k(n)$ of \karrs\ of $[n]$ satisfy a simple recursion equivalent to a simple exponential generating function, showing them to be an infinite family enumerated by successive binomial transforms:

\begin{proposition}\label{prop-arrangement-formula}
Let $A_k(n)$ be the number of \karrs\ of $[n]$. Then
\begin{enumerate}
\item $A_k(0)=1$ and, for $n>0$, $A_k(n)=n\cd A_k(n-1)+(k-1)^n$.
\item The exponential generating function for $A_k(n)$ is ~ $\sum_{n\ge0}A_k(n)\frac{x^n}{n!}=e^{(k-1)x}/(1-x)$.
\item $A_k(n)$ equals the permanent of the $n\times n$ matrix with $k$ on the diagonal and 1s elsewhere.
\item The sequence $\left(A_k(n)\right)_{n\ge0}$ is the binomial transform of the sequence $\left(A_{k-1}(n)\right)_{n\ge0}$, that is, $A_k(n)=\sum_{i\ge0}\binom{n}{i}A_{k-1}(i)$.
\end{enumerate}
\end{proposition}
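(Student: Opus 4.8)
The plan is to reduce all four parts to the single identity
\[
A_k(n)=\sum_{\sig\in\clsn}k^{\fix(\sig)},
\]
which holds because a \karr\ with underlying permutation $\sig$ is precisely a choice of one of $k$ colors for each of the $\fix(\sig)$ fixed points of $\sig$. I would establish this first and then read off the four statements from it.

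Part (iii) is then immediate from the definition of the permanent: if $M$ is the $n\times n$ matrix with $M_{ii}=k$ and $M_{ij}=1$ for $i\ne j$, then $\prod_{i=1}^{n}M_{i,\sig(i)}=k^{\fix(\sig)}$, so summing over $\sig\in\clsn$ gives $\operatorname{per}(M)=A_k(n)$.

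For part (ii) I would use the standard decomposition of a permutation of $[n]$ into its set $S$ of fixed points together with the induced derangement of $[n]\setminus S$; under this bijection the weight $k^{\fix(\sig)}$ becomes $k^{|S|}$, so $A_k(n)=\sum_{j=0}^{n}\binom{n}{j}k^{j}D_{n-j}$, where $D_m$ denotes the number of derangements of $[m]$ and $D_0=1$. Passing to exponential generating functions converts this binomial convolution into a product, and using the classical identity $\sum_{m\ge0}D_m x^m/m!=e^{-x}/(1-x)$ gives
\[
\sum_{n\ge0}A_k(n)\frac{x^n}{n!}=e^{kx}\cdot\frac{e^{-x}}{1-x}=\frac{e^{(k-1)x}}{1-x},
\]
which is (ii). Both remaining parts fall out of this. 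Multiplying by $(1-x)$ and comparing coefficients of $x^n$ in $(1-x)\sum_{n}A_k(n)x^n/n!=e^{(k-1)x}$ yields $A_k(n)/n!-A_k(n-1)/(n-1)!=(k-1)^n/n!$, i.e.\ the recursion $A_k(n)=nA_k(n-1)+(k-1)^n$ of (i), with $A_k(0)=1$; and since $e^{(k-1)x}/(1-x)=e^{x}\cdot e^{(k-2)x}/(1-x)$, we get $\sum_n A_k(n)x^n/n!=e^x\sum_n A_{k-1}(n)x^n/n!$, and multiplication by $e^x$ is exactly the binomial transform, giving (iv).

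I expect no serious obstacle here; the only points needing a little care are the invocation of the derangement generating function (which is itself the same fixed-point/derangement splitting applied to $\sum_m m!\,x^m/m!=1/(1-x)$), and --- if one prefers a bijective proof of (iv) over the generating-function one --- checking that removing the color-$k$ fixed points of a \karr\ of $[n]$, followed by an order-preserving relabeling, produces a genuine $(k-1)$-arrangement and that this map is invertible, whence $A_k(n)=\sum_{i}\binom{n}{i}A_{k-1}(i)$ by summing over the size $n-i$ of the removed set.
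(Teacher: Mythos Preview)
Your argument is correct. The main difference from the paper is in the logical order for (i) and (ii). The paper first gives a bijective proof of the recursion (i): working with the \emph{permutation form} of a \karr, it inserts a new maximum letter into each of the $n$ slots of a $k$-arrangement of $[n-1]$, producing all $k$-arrangements of $[n]$ that contain at least one positive letter, and then observes that the remaining $(k-1)^n$ arrangements are exactly those whose permutation form consists entirely of negative letters. From (i) the paper then derives the exponential generating function (ii) by induction, starting from the derangement case $k=0$. You instead go the other way: you obtain (ii) directly from the identity $A_k(n)=\sum_{\sig}k^{\fix(\sig)}$ via the fixed-points/derangement splitting and the product of exponential generating functions, and then read off the recursion (i) by multiplying through by $1-x$. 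Your route is shorter and avoids the permutation-form machinery; the paper's route buys a combinatorial explanation of the $(k-1)^n$ term in the recursion. Parts (iii) and (iv) are handled the same way in both.
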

\begin{proof}
We will show how to generate all elements of $\ank$ from those of $\arr{k}{n-1}$, counting them along the way.
  Recall from Definition~\ref{def-arrangements} that the permutation form of an element of $\arr{k}{n-1}$ is a word $\a$ consisting of any $K$ negative letters from $\{-k+1,-k+2,\ldots,-1\}$, where $0\le K\le n-1$ and the positive letters $\{1,2,\ldots,n-1-K\}$ once each.
  For any such \karr\ $\a$ let $M$ be its maximum letter, except $M=0$ if all letters of $\a$ are negative. We can then insert a new maximum $M'=M+1$ in any one of the $n$ places before or after a letter in $\a$ to obtain words with letters from $\{-k+1,-k+2,\ldots,-1,1,2,\ldots,n-K\}$ that contain at least one positive letter.  Clearly this is injective, since if two resulting words are equal they have the unique maximum letter, in the same place, and removing it gives two identical preimages.  Conversely, any \karr\ of $[n]$ that has some positive letters has a unique maximum letter that when removed leaves a \karr\ of $[n-1]$ (possibly with only negative letters).

This thus produces, from the \karrs\ of $[n-1]$, all \karrs\ of $[n]$ that contain at least one positive letter, and thus $n$ such \karrs\ of $[n]$ for each \karr\ of $[n-1]$, or $n\cd A_k(n-1)$ in total.  What remains are the \karrs\ of $[n]$ with only negative letters, of which there are $(k-1)^n$.

This recursion leads to the exponential generating function claimed, via the known exponential generating function for derangements ($k=0$), induction and a straightforward manipulation.

That $A_k(n)$ equals the permanent of the $n\times n$ matrix  $M_n^k$ with $k$ on the diagonal and 1s elsewhere is easily derived from the definition of the permanent and the fact that the permanent of $M_n^1$ counts permutations of $[n]$.  Namely, a $k$ appearing in the expansion of the permanent of $M_n^k$ corresponds to a fixed point in a permutation, as each $k$ comes from the diagonal of $M_n^k$, and thus counts the $k$ possible choices for coloring that fixed point.

Finally, it is well known (and easy to prove) that the binomial transform of an exponential generating function $f(x)$ is $e^x\cd f(x)$ which implies $\left(A_k(n)\right)_{n\ge0}$ is the binomial transform of $\left(A_{k-1}(n)\right)_{n\ge0}$.
\end{proof}

\begin{remark}
Interestingly, all the properties of Proposition~\ref{prop-arrangement-formula} hold even for negative~$k$, and it seems that for any~$k$ the $A_k(n)$ eventually become positive for 
$n$ sufficiently large, so an obvious question is whether the $A_k(n)$ also enumerate some interesting combinatorial structures for $k<0$.
\end{remark}

The definitions of descents, excedances, inversions and major index for permutations 
generalize in a natural way to the derangement and permutation forms of \karrs\ as follows.

\begin{definition}\label{def-arr-stats}
  Given the derangement form or permutation form $\a=a_1a_2\ldots a_n$ of a \karr\  let $s_1s_2\ldots s_n$ be the word consisting of the letters of $\a$ sorted in weakly increasing order.
  \begin{enumerate}
  \item An \emph{excedance} in $\a$ is an $i$ such that $a_i>s_i$. The number of excedances in $\a$ is denoted $\exce(\a)$.
  \item A \emph{descent} in $\a$ is an $i$ such that $a_i>a_{i+1}$. The number of descents in $\a$ is denoted $\des(\a)$.
  \item The \emph{major index} of $\a$, denoted $\maj(\a)$, is the sum of the descents of $\a$.
  \item An \emph{inversion} in $\a$ is a pair $(i,j)$ such that $i<j$ and $a_i>a_j$. The number of inversions in $\a$ is denoted $\inv(\a)$.
  \end{enumerate}
\end{definition}

We present below some results and several conjectures for the above statistics.  Which other statistics on permutations generalize in natural ways to \karrs\ remains to be elicited, as well as the distributions of such statistics.  First we present a property of 
\karrs\ in the permutation form, allowing us to exploit the substantial knowledge on rearrangement classes, which are sets of permutations of multisets.

\begin{definition}
  Let $\a$ be a word of length $n$ on some alphabet, and $M(\a)$ the multiset of all letters in $\a$.  The \emph{rearrangement class of $\a$} is the set of words of length $n$ whose multiset of letters is $M(\a)$.
\end{definition}

As an example, the rearrangement class of 0110 consists of $\{0011, 0101, 0110, 1001, 1010, 1100\}$.

\begin{proposition}\label{prop-karr-rearr}
  The set of elements of $\ank$ in permutation form, for any $n$ and $k$, is a union of rearrangement classes.
\end{proposition}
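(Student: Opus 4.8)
The plan is to show that membership of a word $\a$ in $\ank$ (in permutation form) depends only on the multiset $M(\a)$ of its letters, so that whenever $\a\in\ank$ the entire rearrangement class of $\a$ lies in $\ank$. First I would recall from Definition~\ref{def-der-form} exactly which words constitute $\ank$ in permutation form: a word on the alphabet $\{-k+1,\ldots,-1,1,2,\ldots,n-K\}$ in which $K$ is the number of negative letters, each positive value $1,2,\ldots,n-K$ occurs exactly once, and the subword of positive letters is a \emph{permutation} of $[n-K]$. The key observation is that the last condition is vacuous: \emph{any} arrangement of the distinct values $1,2,\ldots,n-K$ in some set of positions is automatically a permutation of $[n-K]$. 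Hence the only genuine constraints on $\a$ are (i) the set of letters used, as a multiset, and (ii) implicitly, that the number of negative letters $K$ is consistent with the positive letters present being exactly $\{1,\ldots,n-K\}$ — but both of these are properties of $M(\a)$ alone, not of the order in which the letters appear.

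Concretely, I would argue as follows. Fix $\a\in\ank$ in permutation form and let $M=M(\a)$ be its multiset of letters. Let $\a'$ be any rearrangement of $\a$, i.e.\ any word with $M(\a')=M$. Then $\a'$ has exactly the same number $K$ of negative letters (all drawn, with multiplicity, from $\{-k+1,\ldots,-1\}$, since $M$ only contains such negatives) and exactly the same positive letters, namely one copy each of $1,2,\ldots,n-K$. Therefore the restriction of $\a'$ to its positive positions is a word on $\{1,\ldots,n-K\}$ using each value once, which is by definition a permutation of $[n-K]$. By Definition~\ref{def-arrangements} together with Definition~\ref{def-der-form}, this means $\a'\in\ank$. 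Since $\a'$ was an arbitrary element of the rearrangement class of $\a$, that whole class is contained in $\ank$; and since $\a$ was arbitrary, $\ank$ (in permutation form) is a union of rearrangement classes.

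I do not expect a serious obstacle here — the proposition is essentially an unwinding of the definition of the permutation form. The one point that warrants a sentence of care is the bookkeeping of $K$: one must note that the alphabet bound $n-K$ in Definition~\ref{def-der-form} is itself determined by $K$, which in turn is determined by $M(\a)$, so there is no circularity and no order-dependence. It is also worth remarking (though not strictly needed) why the analogous statement fails for the derangement form: there the positive subword must be a \emph{derangement}, i.e.\ fixed-point-free, which is genuinely an order condition and is not preserved by rearrangement — this contrast explains why the proposition is phrased for the permutation form only. I would close with the example already in the text (the rearrangement class of a short binary-like word) to make the union-of-classes structure concrete.
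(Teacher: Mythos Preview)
Your proof is correct and follows essentially the same approach as the paper: both arguments observe that the permutation form of a $k$-arrangement is characterized entirely by its multiset of letters (any multiset of negatives from $\{-1,\ldots,-k+1\}$ together with one copy each of $1,\ldots,n-K$), so that any rearrangement of such a word is again a valid permutation form. Your additional remarks---that the ``permutation of $[n-K]$'' condition is automatically satisfied, and that the derangement form fails because fixed-point-freeness is an order condition---are helpful elaborations but not required.
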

\begin{proof}
  The permutation form of a \karr\ $\a\in\ank$ consists of any multiset of size $s\le n$ of negative numbers in $\{-1,-2,\ldots,-k+1\}$ and each of the positive numbers in $[n-s]$, once each.  Every rearrangement of $\a$ consists of the same multiset of negative numbers and the same positive numbers and thus is the permutation form of some element of $\ank$.
\end{proof}

It was shown already by MacMahon~\cite{macmahon} early last century that $\exc$ and $\des$ (as defined above) are equidistributed on the rearrangement class of any word, and that the same applies to $\inv$ and $\maj$.  Although rearrangement classes are typically defined on words containing all the letters in $[k]$ for some $k$, statistics on them often depend only on the relative sizes of letters.  Thus, thanks to Proposition~\ref{prop-karr-rearr}, we have the following result. 

\begin{proposition}
Suppose $s$ and $s'$ are two statistics on words of integers that depend only on the relative sizes of those integers, and suppose $s$ and $s'$ are equidistributed on the rearrangement class of any word.  Then $s$ and $s'$ are equidistributed on $\ank$ in permutation form for any $n$ and $k$. 
\end{proposition}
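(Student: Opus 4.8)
The plan is to derive the statement by combining Proposition~\ref{prop-karr-rearr} with two soft observations: that equidistribution of two statistics is additive over disjoint unions of word-sets, and that a statistic depending only on relative sizes of entries is unchanged under an order-preserving relabeling of the alphabet. Fix $n$ and $k$. First I would note that distinct rearrangement classes are disjoint (two words share a rearrangement class exactly when they have the same multiset of letters), so the decomposition of $\ank$ in permutation form furnished by Proposition~\ref{prop-karr-rearr} is a \emph{disjoint} union $R_1 \sqcup R_2 \sqcup \cdots$ of rearrangement classes. If $s$ and $s'$ are equidistributed on each $R_j$, then, writing the distribution of a statistic on a finite word-set $X$ as the generating polynomial $\sum_{w \in X} q^{s(w)}$ and summing over the $R_j$, they are equidistributed on the union $\ank$. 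Hence it suffices to treat a single rearrangement class $R_j$.

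Fix $j$ and pick a word $\a$ in permutation form whose rearrangement class is $R_j$. Its letters form a finite set $L \subset \ZZ$, and I would let $\theta\colon L \to \{1,2,\ldots,|L|\}$ be the unique increasing bijection, applied letterwise to give a bijection $\Theta\colon R_j \to R_j'$ onto the rearrangement class $R_j'$ of the word $\Theta(\a)$ over the standard alphabet $\{1,\ldots,|L|\}$. Because $\Theta$ preserves every order relation between entries of a word (equalities included), and because $s$ and $s'$ depend only on the relative sizes of the entries, one has $s(\Theta(w)) = s(w)$ and $s'(\Theta(w)) = s'(w)$ for all $w \in R_j$. The hypothesis now applies to the word $\Theta(\a)$: $s$ and $s'$ are equidistributed on its rearrangement class $R_j'$; transporting this equidistribution back along the bijection $\Theta$ via the two displayed identities gives equidistribution on $R_j$. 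Summing over $j$ finishes the argument.

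I do not expect a genuine obstacle: the proof is an assembly of Proposition~\ref{prop-karr-rearr} and the hypothesis, mirroring the remark preceding the statement, where this mechanism is used to carry MacMahon's equidistribution of $\exce$ with $\des$, and of $\inv$ with $\maj$, over to $\ank$. The one point deserving care is the relabeling step, whose sole purpose is to reconcile the non-standard alphabet $\{-k+1,\ldots,-1,1,\ldots,n-K\}$ of the permutation form with the standard alphabets on which classical equidistribution theorems are customarily phrased; if the hypothesis is read as already quantifying over words on arbitrary finite integer alphabets, this step is unnecessary and the conclusion is immediate from Proposition~\ref{prop-karr-rearr}.
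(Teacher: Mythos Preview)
Your proposal is correct and follows the same approach the paper takes: the paper does not give a formal proof here, instead treating the proposition as an immediate consequence of Proposition~\ref{prop-karr-rearr} together with the sentence preceding the statement (that statistics depending only on relative sizes transfer from standard to arbitrary alphabets). Your write-up is simply a careful unpacking of this implicit argument, with the relabeling step made explicit; as you yourself note in the final paragraph, that step is only needed to reconcile conventions about alphabets, and the paper evidently reads the hypothesis as already covering arbitrary integer alphabets.
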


Since the definitions of $\exc$, $\des$, $\inv$ and $\maj$ only depend on relative sizes of letters in \karrs, we have the following result.

\begin{proposition}
  The statistics $\exc$ and $\des$, and, respectively, $\inv$ and $\maj$, are equidistributed on elements of $\ank$ in their permutation form for any $n$ and $k$.
\end{proposition}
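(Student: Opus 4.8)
The plan is short: the statement is an immediate instance of the preceding proposition, so the only real work is to verify its two hypotheses for the pairs $(\exc,\des)$ and $(\inv,\maj)$.

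First I would check that each of the four statistics in Definition~\ref{def-arr-stats} depends only on the relative order of the letters of the word, i.e.\ is unchanged if $\a=a_1\cdots a_n$ is replaced by $f(a_1)\cdots f(a_n)$ for any strictly increasing map $f$ on the underlying alphabet. For $\des$, $\maj$ and $\inv$ this is immediate from the definitions: the condition $a_i>a_{i+1}$, the resulting descent set (hence its sum), and the set of pairs $i<j$ with $a_i>a_j$ are all preserved by $f$. For $\exce$ one further observation is needed: if $s_1\le\cdots\le s_n$ is the weakly increasing rearrangement of $\a$, then $f(s_1)\le\cdots\le f(s_n)$ is the weakly increasing rearrangement of $f(\a)$, so the positions $i$ with $a_i>s_i$ coincide with those with $f(a_i)>f(s_i)$. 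This is the place that needs attention, since permutation forms contain repeated negative letters and mix negative with positive letters; the point is that the order-preserving invariance handles ties and the two-sided alphabet uniformly.

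Next I would invoke MacMahon's classical equidistribution results~\cite{macmahon}: on the rearrangement class of any word (equivalently, on the rearrangements of any multiset), $\exc$ is equidistributed with $\des$, and $\inv$ is equidistributed with $\maj$. Combined with Proposition~\ref{prop-karr-rearr}, which states that $\ank$ in permutation form is a union of rearrangement classes, this verifies both hypotheses of the preceding proposition for each of the two pairs, and the claimed equidistribution on $\ank$ follows.

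The closest thing to an obstacle is purely presentational: MacMahon's theorem is usually stated for words over $\{1,2,\ldots,m\}$ in which every letter occurs, whereas our permutation forms live over an alphabet with negative entries and gaps. But the relative-order invariance established in the first step lets us carry a given rearrangement class of permutation forms to such a standard word by an order-preserving relabeling of its distinct letters, after which MacMahon applies verbatim and transports back. No genuinely new ideas beyond the two cited propositions are required.
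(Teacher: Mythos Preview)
Your proposal is correct and follows essentially the same approach as the paper: the paper simply observes that $\exc$, $\des$, $\inv$, $\maj$ depend only on the relative sizes of letters, cites MacMahon's equidistribution on rearrangement classes, and then applies the preceding proposition (together with Proposition~\ref{prop-karr-rearr}). You fill in a bit more detail---particularly the verification for $\exc$ via the sorted word and the order-preserving relabeling to transport MacMahon's theorem to the given alphabet---but the logical route is identical.
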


\begin{conjecture}\hskip-.4em\footnote{In response to the preprint of this article posted on {\tt arxiv.org}, Fu, Han and Lin~\cite{fu-han-lin-k-arrangements} have recently proved Conjectures 1 through 4 in this section.} \hspace{-.5ex}
  The statistic $\des$ has the same distribution on elements of $\ank$ in their 
derangement form and permutation form for any~$n$ and~$k$.
\end{conjecture}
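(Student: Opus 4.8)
The plan is to evaluate the descent generating polynomials of the derangement form and of the permutation form by the classical $P$-partition reciprocity, and then to show they coincide by reducing, uniformly in $k$, to a single identity for the descent distribution of derangements.

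First I would invoke the reciprocity lemma: for any finite set $W$ of integer words of common length $n$ one has $\sum_{w\in W}x^{\des(w)}=(1-x)^{n+1}\sum_{N\ge 1}f_W(N)\,x^{N-1}$, where $f_W(N)=\sum_{w\in W}\binom{N-\des(w)+n-1}{n}$ is the number of pairs $(w,\epsilon)$ with $w\in W$ and $\epsilon\in[N]^n$ weakly increasing and strictly increasing across every descent of $w$. Thus, fixing $n$ and $k$, it suffices to show that the set of permutation forms and the set of derangement forms of length $n$ arising from $\ank$ have the same function $f(N)$ for all $N$.

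Next I would exploit the shuffle structure. A permutation form of length $n$ is an interleaving of a permutation $\sigma$ of some $[m]$ with an arbitrary word of length $n-m$ over the $k-1$ negative colours, all of which lie below $\{1,\dots,m\}$; a derangement form is the same, except that $\sigma$ is a derangement and there are $k$ negative colours. A short check — a positive letter immediately followed by a negative one is always a descent — shows that in any compatible pair $(w,\epsilon)$ the restriction of $\epsilon$ to the positive positions is compatible with $\sigma$, while the restriction to the negative positions encodes exactly a colour--value configuration. The standard $P$-partition merging bijection then gives the product formulas
\[
f_{\mathrm{perm}}(N)=[y^n]\frac{1}{(1-Ny)\,(1-y)^{N(k-1)}},\qquad
f_{\mathrm{der}}(N)=[y^n]\frac{\sum_{m\ge 0}h_m(N)\,y^m}{(1-y)^{Nk}},
\]
using the Worpitzky identity $\sum_{\sigma\in\mathcal S_m}\binom{N-\des\sigma+m-1}{m}=N^m$ for the positive part of a permutation form, and writing $h_m(N):=\sum_{\delta}\binom{N-\des\delta+m-1}{m}$, with $\delta$ running over the derangements of $[m]$, for the positive part of a derangement form. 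Cancelling the factor $(1-y)^{Nk}$, the desired equality $f_{\mathrm{perm}}(N)=f_{\mathrm{der}}(N)$ becomes, for every $k$ at once, the single assertion $\sum_{m\ge 0}h_m(N)\,y^m=(1-y)^N/(1-Ny)$, equivalently the recurrence $h_m(N)=N\,h_{m-1}(N)+(-1)^m\binom{N}{m}$. In particular the general case is thereby reduced to the case $k=1$, namely to the statement that the descent statistic on permutations of $[n]$ remains Eulerian after every fixed point is recoloured to a common value below all the other letters.

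The main obstacle is precisely this last identity: it reflects a genuine, non-additive interaction between fixed points and descents — note, for instance, that $A_n(x)=\sum_j\binom{n}{j}d_{n-j}(x)$, with $A_n$ the Eulerian polynomial and $d_m(x):=\sum_{\delta}x^{\des\delta}$ the derangement descent polynomial, already fails at $n=3$. I would pursue two routes. One is analytic: obtain the exponential generating function of the $d_m(x)$ in closed form and read off the coefficient identity directly. The other, which seems more promising, is bijective: unwinding the reciprocity, the identity is equivalent word by word to
\[
\#\{a\in[N]^n:\text{the sorting permutation of }a\text{ has no fixed point}\}=\#\{a\in[N]^n:\text{the longest strictly decreasing prefix of }a\text{ has even length}\},
\]
the right-hand side being the non-cancelling remainder of the signed sum $\sum_j(-1)^j\binom{N}{j}N^{n-j}$ under the evident sign-reversing involution acting on words whose decreasing prefix has odd length; I would look for an explicit bijection between these two families of words over $[N]$. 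Granting this, the remainder of the argument is routine bookkeeping with the $P$-partition formalism.
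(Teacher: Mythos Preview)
The paper itself contains no proof of this statement: it is explicitly listed as a conjecture, with a footnote indicating that Fu, Han and Lin subsequently proved it. So there is no ``paper's proof'' to compare against, and your proposal must be judged on its own merits.

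Your reduction is correct and clean. The $P$-partition shuffle argument does apply here, because the positive and negative alphabets are disjoint with every positive letter larger than every negative one; hence the standard merge bijection (ties broken by placing the negative letter first) shows that, summing over all interleavings,
\[
f_{\mathrm{perm}}(N)=[y^n]\,\frac{1}{(1-Ny)(1-y)^{N(k-1)}},\qquad
f_{\mathrm{der}}(N)=[y^n]\,\frac{\sum_{m\ge0}h_m(N)\,y^m}{(1-y)^{Nk}},
\]
and equality for all $n$ and $k$ is indeed equivalent to the single $k$-free identity
\[
\sum_{m\ge0}h_m(N)\,y^m=\frac{(1-y)^N}{1-Ny},
\qquad
h_m(N)=\sum_{\delta\in D_m}\binom{N+m-1-\des\delta}{m}.
\]
Your combinatorial unpacking of both sides is also right: $h_m(N)$ counts words $a\in[N]^m$ whose standardization is a derangement, while $\sum_{j}(-1)^j\binom{N}{j}N^{m-j}$ counts words in $[N]^m$ whose maximal strictly decreasing prefix has even length (the signed sum telescopes word by word).

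The gap is that you stop here: you identify this identity as the ``main obstacle'' and sketch two possible attacks without carrying either out. In fact this obstacle is already in the literature. Since $\#\{a\in[N]^m:\mathrm{std}(a)=\sigma\}$ depends only on $\des(\sigma)$, your word-level identity is equivalent to
\[
\#\{\sigma\in\mathcal S_m:\sigma\text{ is a derangement},\ \des\sigma=d\}
=\#\{\sigma\in\mathcal S_m:\text{first ascent at an even position},\ \des\sigma=d\}
\]
for every $m$ and $d$. Permutations whose first ascent occurs at an even position are precisely D\'esarm\'enien's \emph{desarrangements}, and the equidistribution above (in fact the stronger statement with the full descent \emph{set}) is the theorem of D\'esarm\'enien and Wachs. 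Citing that result closes the argument; your bijective reformulation is exactly the problem they solved. So your proposal is a valid proof once the D\'esarm\'enien--Wachs theorem is invoked, but as written it leaves the decisive step as an open problem.
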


In the remainder of this subsection, all permutation patterns are taken to be classical.

It is well known that the number of permutations avoiding any single permutation pattern of length 3 is the $n$-th Catalan number $C(n)=\frac{1}{n+1}\binom{2n}{n}$. An analogous statement is true of the 2-arrangements in their permutation form.  The definition of occurrence of a pattern extends trivially to \karrs\ in their permutation (or derangement) form, but an occurrence of such a pattern must involve different letters, and can thus not have more than one copy of any of the negative letters. (There are, however, generalizations of patterns that allow repeated letters in occurrences, and studying these for the \karrs\ would be natural. For information on such patterns see~\cite{kitaev-intro-partord-patts} and \cite[Section 1.5]{kitaev-book}).

\begin{proposition}\label{conj-arr-patt}
  The number of 2-arrangements of $[n]$ whose permutation form avoids any single 
permutation pattern of length 3 is the Catalan number $C(n+1)$.  Moreover, the number of such 2-arrangements with $k$ negative numbers in their permutation form is the ballot number $\frac{k+1}{n+1}\binom{2n-k}{n}$.
\end{proposition}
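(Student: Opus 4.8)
\emph{Proof idea.} The plan is to work entirely in the permutation form and to lean on Proposition~\ref{prop-karr-rearr}, which identifies the elements of $\arr{2}{n}$ in permutation form with the disjoint union, over $0\le K\le n$, of the rearrangement class $R_{n,K}$: all words of length $n$ that use the letter $-1$ exactly $K$ times and each of $1,2,\dots,n-K$ exactly once. It suffices to establish the refined statement, namely that for a fixed pattern $\tau$ of length $3$ the number $g_{n,K}$ of words of $R_{n,K}$ avoiding $\tau$ equals $\frac{K+1}{n+1}\binom{2n-K}{n}$. The first assertion then follows: writing $F(x):=\sum_{m\ge0}C(m)x^m$ for the ordinary generating function of the Catalan numbers, the standard formula $[x^m]F(x)^j=\frac{j}{2m+j}\binom{2m+j}{m}$ gives $[x^{n-K}]F(x)^{K+1}=\frac{K+1}{n+1}\binom{2n-K}{n}$, so $\sum_{K=0}^n g_{n,K}=[x^n]\sum_{K\ge0}x^KF(x)^{K+1}=[x^n]F(x)^2=C(n+1)$.

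The key structural remark is that an occurrence of a length-$3$ pattern in a word $w\in R_{n,K}$ involves three pairwise distinct entries, hence at most one of them equals $-1$; and since $-1$ is the strict minimum of the alphabet, a participating $-1$ is forced to play the role of the smallest symbol of the pattern. Consequently, avoidance of $\tau$ reduces to a condition on the subword of positive letters (which, read as a permutation of $[n-K]$, must avoid $\tau$) together with a ``staircase'' condition constraining the positions of the $-1$'s relative to certain ascents or descents of the positive subword, whose precise form is dictated by the position of the symbol $1$ within $\tau$. Since reversal of words preserves each $R_{n,K}$ and acts on length-$3$ patterns by $123\leftrightarrow321$, $132\leftrightarrow231$, $213\leftrightarrow312$, it is enough to treat one representative of each of these three classes.

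For $\tau=231$ I would decompose $w\in R_{n,K}$ (for $n\ge1$) according to its largest positive letter $m=n-K$: either $w=(-1)^n$ is purely negative, or $w=L\,m\,R$. Examining the possible occurrences of $231$ one finds that $w$ avoids $231$ precisely when $L$ and $R$ both avoid $231$, every positive letter of $L$ is smaller than every positive letter of $R$, and in addition (so that $m$ cannot serve as the ``$3$'' of a pattern) either $L$ contains no positive letter or $R$ contains no $-1$. Encoding this in the bivariate generating function $G(x,y):=\sum_{n\ge0}\sum_{K=0}^n g_{n,K}x^n y^K$, where $x$ marks length and $y$ marks the number of $-1$'s, yields
\[
G \;=\; \frac{1}{1-xy} \;+\; \frac{x}{1-xy}\,G \;+\; xF(x)\Bigl(G-\frac{1}{1-xy}\Bigr),
\]
the three terms accounting respectively for $w$ purely negative, for $L$ purely negative (so $R$ is unconstrained, contributing $G$), and for $L$ carrying a positive letter (which forces $R$ to be a $231$-avoiding permutation, contributing $F(x)$). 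Using $F=1+xF^2$ and the consequent identity $1-xF=1/F$, this solves to $G(x,y)=\dfrac{1}{1-xy-xF(x)}=\sum_{K\ge0}x^Ky^KF(x)^{K+1}$, so that $g_{n,K}=[x^{n-K}]F(x)^{K+1}=\frac{K+1}{n+1}\binom{2n-K}{n}$, as required; as sanity checks, $G(x,0)=1/(1-xF)=F(x)$ recovers $g_{n,0}=C(n)$, and $g_{n,n}=1$.

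For the remaining two reversal classes one proceeds analogously: for $\tau=213$ by splitting $w$ at its smallest positive letter (forcing the positive letters to its left to dominate those to its right, with a matching constraint on the placement of the $-1$'s), and for $\tau=321$ by exploiting the description of $321$-avoidance as a union of two increasing runs. In each case the same functional equation for $G(x,y)$ should emerge, completing the proof. I expect the $321$ (equivalently $123$) case to be the main obstacle: there the natural combinatorial decomposition interleaves two runs rather than concatenating two blocks, so verifying that the $-1$'s can be threaded through the two runs in exactly the way that reproduces the factor $F(x)^{K+1}$ --- and not merely a sequence that happens to sum to $C(n+1)$ --- calls for a more delicate bookkeeping than in the $231$ and $213$ cases.
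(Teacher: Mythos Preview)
Your argument for the pattern $231$ is correct: the decomposition at the largest positive letter is right, the functional equation is set up and solved correctly, and the extraction of the ballot numbers from $G(x,y)=\sum_{K\ge0}(xy)^K F(x)^{K+1}$ is clean. By reversal this also settles $132$.

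The gap is exactly where you flag it: you have not proved the statement for the classes $\{213,312\}$ and $\{123,321\}$. Your sketch for $213$ (split at the smallest positive letter) does not obviously yield the same functional equation, because the $-1$'s are \emph{smaller} than that pivot and so the role of the smallest value in an occurrence of $213$ can be played either by the pivot or by a $-1$; this breaks the clean dichotomy you exploit in the $231$ case. For $123/321$ you yourself note that the two-increasing-run description does not concatenate, and you give no mechanism for producing the factor $F(x)^{K+1}$. As written, then, only two of the six patterns are covered.

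The paper closes this gap with a single stroke you are missing: after proving the result for one pattern (the paper does $312$, via a direct bijection to Dyck paths of semilength $n+1$ in which the $-1$'s correspond to the non-final returns to the $x$-axis, which also yields the ballot-number refinement), it invokes the theorem of Savage and Wilf that for \emph{any} multiset the number of rearrangements avoiding a given length-$3$ pattern is the same for all six patterns. Since, by Proposition~\ref{prop-karr-rearr}, the set of permutation forms of $2$-arrangements of $[n]$ is a disjoint union of rearrangement classes (indeed, of the classes $R_{n,K}$ you introduced), the Savage--Wilf theorem transports both the total count and the refinement by $K$ from the one pattern you have handled to all six. This replaces your three separate case analyses---including the problematic $321$ case---with a one-line citation, and is the idea your proof needs to be complete.
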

\begin{proof}
We first exhibit a bijection between 312-avoiding 2-arrangements of $[n-1]$ and Dyck paths 
of semilength $n$. Such a Dyck path consists of upsteps $(1,1)$ and downsteps $(1,-1)$, starting from $(0,0)$, ending at $(2n,0)$ and never going below the $x$-axis.  A Dyck path can be decomposed into shorter such paths, according to where it touches the $x$-axis.  The part strictly between two consecutive such steps, the first an upstep from the axis, the latter a downstep to the axis, can be any Dyck path of the appropriate length, lifted one unit.

In a 312-avoiding permutation form of a 2-arrangement the positive letters to the right of any $-1$ must each be larger than all the positive letters preceding that $-1$.  The $-1$s thus partition the positive letters into blocks (possibly empty), each block consisting of letters determined by its size and the number of positive letters to its right, and each block can contain an arbitrary 312-avoiding permutation of those letters.  The 312-avoiding permutations of $[m]$ are well known to be in bijection with Dyck paths of semilength $m$ and so this structural decomposition is identical to that of Dyck paths described above. This correspondence pairs Dyck paths of semilength $n$ with arrangements of $[n-1]$ because we must prepend a $-1$ to ensure that the first block of positive letters is preceded by a $-1$.

It follows that 2-arrangements of $[n-1]$ whose permutation form avoids 312 are in bijection with 312-avoiding permutations of $[n]$.  It was shown in~\cite[Thm. 3]{savage-wilf-patts-comp-multisets} that the number of permutations of a given multiset that avoid a pattern of length 3 is independent of the pattern.  As we showed in Proposition~\ref{prop-karr-rearr}, the set of \karrs\ for any $n$ and $k$ is a union of rearrangement classes, and the permutations of a multiset are precisely the rearrangement class of any word consisting of the letters of that multiset.

Finally, since the $-1$s in a 312-avoiding 2-arrangement of $[n-1]$ correspond bijectively to all but the last return to the $x$-axis of the corresponding Dyck path, the distribution of the number of $-1$s claimed follows from a well known fact about Dyck paths.
\end{proof}

\begin{conjecture}
  The number of 3-arrangements of $[n]$ whose permutation form avoids any single pattern of length 3 is $C(n+2)-2^n$.
\end{conjecture}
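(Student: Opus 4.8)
The plan is to adapt the proof of Proposition~\ref{conj-arr-patt}, now working with the two negative symbols $-1,-2$ and tracking the new interaction between them. By \cite[Thm.~3]{savage-wilf-patts-comp-multisets} together with Proposition~\ref{prop-karr-rearr}, the number of elements of $\arr{3}{n}$ in permutation form avoiding a fixed pattern of length $3$ does not depend on the pattern, so it is enough to count those avoiding $312$; recall that such an object is just a word $w$ of length $n$ over $\{-2,-1\}\cup\NN$ whose positive letters are $1,2,\dots,m$ (each once, $m$ being their number) and whose other $n-m$ letters are $-1$'s and $-2$'s.

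I would first establish the structural characterization: $w$ avoids $312$ if and only if (i) the subword of positive letters of $w$ avoids $312$ and every negative letter of $w$ is a \emph{barrier} --- i.e.\ all positive letters to its left are smaller than all positive letters to its right --- and (ii) $w$ has no subsequence of the form (positive)$\,(-2)\,(-1)$. This is a short case check on which entry of a hypothetical $312$-occurrence is smallest: a positive smallest entry forces a $312$ inside the positive subword; a negative smallest entry forces either (larger positive)(negative)(smaller positive), which contradicts the barrier condition, or exactly the pattern (positive)$(-2)(-1)$. Condition (i) says precisely that the positive letters, read from left to right, fall into consecutive increasing blocks, one block sitting in each of the $r+1$ gaps cut out by the $r$ negative letters of $w$, with the block in a gap of size $j$ an arbitrary $312$-avoiding permutation of the corresponding integer interval --- so that gap can be filled in $C(j)$ ways, independently of the others.

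I would then compute the generating function in one variable $x$ marking length. Write $c=c(x):=\sum_{n\ge0}C(n)x^n$, so $c=1+xc^2$ and hence $1/(1-xc)=c$. A negative letter weighs $x$; a positive gap that may be filled freely weighs $\sum_{j\ge0}C(j)x^j=c$; a gap that (ii) forces to be empty weighs $1$. If $w$ has no $-1$, then (ii) is vacuous and $w$ is a string of $-2$'s with all $r+1$ of its gaps free, contributing $\sum_{r\ge0}x^r c^{r+1}=c/(1-xc)=c^2$. If $w$ has at least one $-1$, say at the $j^\ast$-th of its negative positions (the last one), then (ii) holds if and only if every $-2$ among the first $j^\ast-1$ negative positions has no positive to its left; letting $i^\ast$ be the position of the last such $-2$ (if any), this is exactly the requirement that the $i^\ast$ leftmost gaps of $w$ are empty. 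Splitting on whether such a $-2$ exists and bookkeeping free versus forced gaps (summing geometric series via $1/(1-xc)=c$) gives contributions $xc^4$ and $x^2c^4/(1-2x)$, for a grand total $c^2+xc^4(1-x)/(1-2x)$. Using $xc^2=c-1$ this simplifies to $c^3+c^2-1/(1-2x)$; since $\sum_{n\ge0}C(n+2)x^n=(c-1-x)/x^2=c^3+c^2$ and $\sum_{n\ge0}2^n x^n=1/(1-2x)$, the coefficient of $x^n$ is $C(n+2)-2^n$, as required.

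The main obstacle is the structural characterization and, more precisely, the fact that condition (ii) couples the arrangement of the negative letters to the placement of the positive letters; this coupling --- absent in the $k=2$ case of Proposition~\ref{conj-arr-patt}, where there is only one negative symbol --- is what forces the two-stage case split on the last $-1$ and the last $-2$ preceding it. (One could instead aim for a bijective proof, realizing the $312$-avoiders as the Dyck paths of semilength $n+2$ with a distinguished set of $2^n$ of them deleted, but the generating-function argument above is the most economical.)
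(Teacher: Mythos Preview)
Your argument is correct. Note, however, that in the paper this statement is presented as a \emph{conjecture}, not a theorem: the authors give no proof, and the footnote preceding Conjecture~1 indicates that Conjectures~1 through~4 were subsequently established by Fu, Han and Lin~\cite{fu-han-lin-k-arrangements}. So there is no ``paper's own proof'' to compare against.

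That said, your approach is the natural extension of the paper's proof of Proposition~\ref{conj-arr-patt}: you invoke Savage--Wilf together with Proposition~\ref{prop-karr-rearr} to reduce to the pattern $312$, and then decompose according to the positions of the negative letters. The new ingredient, which you identify correctly, is that with two distinct negative symbols one must additionally forbid the subsequence $(\text{positive})(-2)(-1)$; this couples the placement of negatives to the positive blocks and forces the case analysis on the last $-1$ and the last $-2$ preceding it. I checked your structural characterization (the case analysis on which entry of a putative $312$ is smallest is complete, since with only two negative values no three distinct letters can all be negative) and your generating-function bookkeeping; the identity $c^2+xc^4=c^3$ (from $1+xc^2=c$) and the simplification $x^2c^4/(1-2x)=c^2-1/(1-2x)$ both hold, giving the claimed $c^3+c^2-1/(1-2x)$.
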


\begin{conjecture}
The distribution of the number of descents on 2-arrangements of $[n-1]$ whose permutation form avoids any single one of the patterns 132, 213, 231 or 312, is given by the triangle sequence {\tt A108838} in~\cite{OEIS}, which counts, among other things, rooted ordered trees with $n$ non-root nodes and $k$ leaves, and has formula $\frac{2}{(n+1)}\binom{n+1}{k+2}\binom{n-2}{k}$.
\end{conjecture}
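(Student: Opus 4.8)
The plan is to work with the permutation form throughout, and for the pattern $312$ to exploit the block decomposition from the proof of Proposition~\ref{conj-arr-patt}. Recall that in a $312$-avoiding permutation form, after prepending a $-1$, the word reads $(-1)\,B_1\,(-1)\,B_2\cdots(-1)\,B_r$, where $r$ is one plus the number of original copies of $-1$, and the positive letters are split, in increasing order of value, into blocks $B_1,\dots,B_r$ (some possibly empty), each $B_i$ carrying an arbitrary $312$-avoiding permutation of its value range; under the bijection of Proposition~\ref{conj-arr-patt} the units $(-1)\,B_i$ are the prime Dyck factors of a path of semilength $n=r+\sum_i|B_i|$. The first step is to observe that prepending the $-1$ does not change the number of descents and that
\[
\des(\a)\;=\;\sum_{i=1}^{r}\des(B_i)\;+\;\#\{\,1\le i\le r-1:\ B_i\neq\emptyset\,\},
\]
since a descent is either internal to some $B_i$ or is the step from the last (positive) letter of a nonempty $B_i$ to the following $-1$, while $-1$ followed by a positive letter is an ascent and adjacent $-1$'s contribute nothing. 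Let $g_\ell(x)=\sum_{B\in\mathcal S_\ell(312)}x^{\des(B)}$; by the well-known descent-Wilf-equivalence (see e.g.\ \cite{kitaev-book}) this is the Narayana polynomial $\sum_{k\ge1}N(\ell,k)\,x^{k-1}$, with $N(\ell,k)=\frac1\ell\binom{\ell}{k}\binom{\ell}{k-1}$, and $G(x,z):=\sum_{\ell\ge0}g_\ell(x)z^\ell$ satisfies $zxG^2=(1-z+zx)G-1$, i.e.\ $G^{-1}=1-z(xG+1-x)$. Translating the displayed identity into generating functions (each unit $(-1)\,B_i$ contributes $z\cdot z^{|B_i|}$, and a non-final unit carries an extra factor $x$ when $B_i\neq\emptyset$) yields
\[
F_{312}(x,z)\;=\;\sum_{r\ge1}\bigl(z(xG+1-x)\bigr)^{r-1}zG\;=\;\frac{zG}{1-z(xG+1-x)}\;=\;zG(x,z)^2,
\]
where $[z^n]F_{312}$ is the descent polynomial of the $312$-avoiding $2$-arrangements of $[n-1]$.

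It then remains to verify two things. First, that $[z^n]\bigl(zG^2\bigr)=\frac{2}{n+1}\sum_k\binom{n+1}{k+2}\binom{n-2}{k}x^k$: using $zG^2=\bigl((1-z+zx)G-1\bigr)/x$ this amounts to $\frac{g_n(x)-g_{n-1}(x)}{x}+g_{n-1}(x)=\frac{2}{n+1}\sum_k\binom{n+1}{k+2}\binom{n-2}{k}x^k$, which is a routine coefficient-by-coefficient identity on Narayana numbers (alternatively a one-line Lagrange-inversion computation on $G$); setting $x=1$ recovers the row sums $C(n)$, consistent with Proposition~\ref{conj-arr-patt}. Second, the stated closed form for {\tt A108838} and its interpretation as rooted ordered trees with $n$ non-root nodes and $k$ leaves follow from the known combinatorial model of that sequence (and, if desired, can be re-derived by matching $zG^2$ to the plane-tree generating function via Lagrange inversion with the appropriate index shift). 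The pattern $213$ is then handled by the identical argument: the $-1$'s now cut the positives into value-\emph{decreasing} blocks with within-block $213$-avoidance, but the boundary formula for $\des$ is unchanged and $\sum_{B\in\mathcal S_\ell(213)}x^{\des(B)}$ is again the Narayana polynomial, so $F_{213}=zG^2=F_{312}$.

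The main obstacle is the remaining two patterns $231$ and $132$. Here the copies of $-1$ do not induce a block partition: avoiding $231$ forces the positive letters occurring before the last $-1$ to be decreasing (and the positive subword to avoid $231$), and avoiding $132$ forces the positives occurring after the first $-1$ to be increasing, so the structure is a ``decreasing prefix'' / ``increasing suffix'' rather than a product of blocks. The naive symmetries do not rescue the situation: reverse-complement fails to act on permutation forms in a descent-preserving way, because the complement cannot fix the repeated minimum $-1$ while reversing the order, and reverse alone only interchanges $\{312,213\}$ and $\{132,231\}$ while swapping ascents and descents. Thus one must either push the mirror-type structural analysis for $231$ and $132$ through to a functional equation that again collapses to $zG^2$ --- the likely route, exploiting that the positive subword still ranges over all permutations of its multiset by Proposition~\ref{prop-karr-rearr} and that Savage--Wilf \cite{savage-wilf-patts-comp-multisets} show the count to be pattern-independent --- or produce an explicit descent-preserving bijection between these families and the $312$-avoiding ones. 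Everything else (the descent-Wilf-equivalence on $\mathcal S_\ell$, the Narayana functional equation, and the final binomial bookkeeping) is standard.
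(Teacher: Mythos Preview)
This statement is a \emph{conjecture} in the paper; the authors give no proof. (A footnote records that Fu, Han and Lin later proved Conjectures~1--4.) So there is no paper argument to compare yours against.

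Your treatment of the patterns $312$ and $213$ is correct and pleasant. The block decomposition from Proposition~\ref{conj-arr-patt} does carry the descent statistic in exactly the way you describe: prepending a $-1$ adds no descent, internal descents of each block $B_i$ are counted by the Narayana polynomial, and the only boundary descents are from a nonempty non-final block into the next $-1$. The generating-function computation collapsing $F_{312}$ to $zG^2$ via $G^{-1}=1-z(xG+1-x)$ is clean, and the reduction of the target formula to a Narayana identity is legitimate (and indeed routine). The $213$ case follows by the same mechanism with value-decreasing blocks. This much constitutes a genuine partial proof that goes well beyond anything in the paper.

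The gap is exactly where you locate it: the patterns $231$ and $132$. Your structural analysis is accurate --- for $231$, a $-1$ can only play the `$1$' in a $231$ occurrence, forcing the positives preceding the last $-1$ to be decreasing rather than block-partitioned, and dually for $132$ --- and your observation that reverse/complement symmetries do not transport the problem to the $312/213$ case (because complement cannot fix the repeated minimum $-1$) is also correct. But you stop at this diagnosis. Neither of the two routes you sketch (pushing a functional equation for the prefix/suffix structure, or exhibiting a descent-preserving bijection to the $312$ case) is carried out, and neither is obviously short: the Savage--Wilf equinumeracy you cite is only for the total count, not for the descent distribution, so it does not by itself close the gap. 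As it stands, your proposal proves the conjecture for two of the four patterns and leaves the other two open.
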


\begin{conjecture}
The distribution of the number of ascents on 2-arrangements of $[n-1]$ whose permutation form avoids the pattern 123, is given by triangle sequence {\tt A236406} in~\cite{OEIS}, which counts 123-avoiding permutations of $[n]$ with $k$ peaks.
\end{conjecture}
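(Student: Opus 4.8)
The plan is to establish the conjecture by a structural decomposition of $123$-avoiding permutation forms together with a generating-function comparison. A convenient first move is the observation that appending a single $-1$ at the right end of a permutation form $\a$ of a $2$-arrangement of $[n-1]$ neither creates an occurrence of $123$ (the appended letter is the minimal value and sits last, so it could only be the \emph{largest}, i.e.\ final, letter of such an occurrence, which is impossible) nor creates or destroys an ascent. This yields an ascent-preserving bijection between $123$-avoiding permutation forms of $2$-arrangements of $[n-1]$ and those of $2$-arrangements of $[n]$ that end in $-1$ (with inverse ``delete the last letter''). Hence it suffices to match the ascent distribution of the latter family with the peak distribution of $123$-avoiding permutations of $[n]$; by Proposition~\ref{conj-arr-patt} and the classical Catalan count, both of these families have size $C(n)$, which is consistent with this reformulation.

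Next I would invoke the canonical decomposition of any $123$-avoiding word as the shuffle of its sequence of (weak) left-to-right minima with the remaining letters, where $123$-avoidance forces the remaining letters to form a strictly decreasing subsequence. For a permutation form of a $2$-arrangement the repeated letter $-1$ is always a left-to-right minimum, so such a word is completely specified by a strictly decreasing run of positive minima (which must precede every $-1$), a block of $K\ge0$ copies of $-1$, a strictly decreasing run of positive non-minima, and an admissible shuffle of these three pieces into a word of length $m$. Encoding this data as a lattice path of semilength $n$ — one step type for left-to-right minima and another for non-minima, with the terminal block of $-1$'s absorbed into the final descent to the $x$-axis in the spirit of the proof of Proposition~\ref{conj-arr-patt} — and recording the position of each ascent relative to the decomposition, I would extract a functional equation (with one catalytic variable) for $G(t,x)=\sum_{m\ge0}t^{m}\sum_{\a}x^{\asc(\a)}$, the inner sum running over $123$-avoiding permutation forms of $2$-arrangements of $[m]$.

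On the permutation side I would recall, or re-derive through a standard generating-tree (ECO) analysis of $\mathrm{Av}(123)$, the bivariate generating function $\mathcal P(t,x)=\sum_{n\ge0}t^{n}\sum_{\tau}x^{\mathrm{pk}(\tau)}$ that enumerates $123$-avoiding permutations by number of peaks $\mathrm{pk}$ — this being the generating function of {\tt A236406} — using the fact that in a $123$-avoiding permutation no two adjacent positions can both be ascents, so that peaks and ascents differ only according to whether the permutation ends with an ascent. The conjecture is then the identity $\mathcal P(t,x)=1+t\,G(t,x)$, which I would verify by showing that both sides satisfy the same algebraic equation (solving the functional equations by the kernel method if needed). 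Alternatively, one could aim to promote the lattice-path encoding above into an explicit bijection onto $123$-avoiding permutations of $[n]$ that carries ascents to peaks.

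The step I expect to be the main obstacle is deriving the functional equation for $G$: one must carefully parametrize the admissible shuffles of the minima run, the $-1$-block and the non-minima run, and then account for ascents across the qualitatively distinct adjacencies the shuffle produces — which is more delicate than the $312$-avoiding case of Proposition~\ref{conj-arr-patt}, where the natural decomposition matched the first-return structure of Dyck paths exactly. A secondary subtlety is that the ``peak versus ascent'' discrepancy must cancel cleanly against the shift from $[n-1]$ to $[n]$ and the appended $-1$; the coincidence of the two polynomials at $n=3$ (both equal to $3+2x$) is a reassuring check on this bookkeeping.
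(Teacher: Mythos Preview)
The paper does \emph{not} prove this statement: it is presented as a conjecture, and the only additional information is the footnote remarking that Fu, Han and Lin subsequently proved it in a separate paper. So there is no in-paper argument to compare your proposal against.

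As for your proposal itself, it is a plausible strategy but, as written, it is a plan rather than a proof, and several of the steps you flag with ``I would'' are where the real content lies. A few specific comments:

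\begin{itemize}
\item Your first reduction (append a trailing $-1$) is clean and correct, and the observation that in a $123$-avoider no two consecutive positions can both be ascents, so that peaks and ascents differ only by a boundary term, is exactly the right bookkeeping device.
\item The structural decomposition you describe is essentially right --- the positive left-to-right minima form a strictly decreasing prefix among the minima, all $-1$'s are weak left-to-right minima once one has appeared, and the non-minima are forced to be strictly decreasing by $123$-avoidance --- but the phrase ``an admissible shuffle of these three pieces'' hides the whole difficulty. You must characterise precisely which interleavings of the three decreasing pieces (positive minima, $-1$'s, positive non-minima) actually arise from $123$-avoiding permutation forms, and this is not the free shuffle: for instance, the positions of the positive minima, the $-1$'s and the non-minima interact through the left-to-right minimum condition, not just through $123$-avoidance. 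Until this admissibility condition is made explicit, you cannot write down the functional equation for $G$.
\item The lattice-path encoding is only gestured at (``one step type for left-to-right minima and another for non-minima, with the terminal block of $-1$'s absorbed into the final descent''), and it is not clear it will track ascents cleanly, since the $-1$'s need not form a contiguous block in the word. This is the place where the analogy with the $312$ case in Proposition~\ref{conj-arr-patt} is weakest, as you yourself note.
\item You also defer both the derivation of the peak generating function for $\mathrm{Av}(123)$ and the verification that the two generating functions satisfy the same algebraic equation. Each of these is routine in principle but neither is done.
\end{itemize}

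In short: the outline could very likely be pushed through, and your sanity check at $n=3$ is reassuring, but the argument as it stands has a genuine gap at the ``admissible shuffle''/functional-equation step, which you correctly identify as the main obstacle but do not resolve. If you want to compare with a completed argument, the reference to consult is the Fu--Han--Lin paper cited in the footnote.
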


Given a \karr\ $\a$ as a permutation $\pi$ and a map $\phi:\fixpts(\pi)\rightarrow[k]$, define the \emph{color-encoding} of $\a$ to be the $k$-colored permutation $(\pi,c)$ for which $c_i=\phi(i)$ if $i\in\fixpts(\pi)$ and $c_i=0$ otherwise.  We can then apply the statistics in Definition~\ref{def-colperm-stats} to the color-encoding of a \karr.

\begin{conjecture}
  The statistics $\inv$ and $\maj$ in Definition~\ref{def-colperm-stats} are equidistributed on $\ank$ in the color encoding for any $n$ and~$k$.  The statistic $\des$ in that definition has the same distribution on $\ank$ as $\des$ on the permutation or derangement form for any $n$ and $k$.
\end{conjecture}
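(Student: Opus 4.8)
The plan is to prove both assertions by a single mechanism --- a Foata-type bijection on $\ank$ --- with essentially all of the work concentrated in the statement about $\inv$ and $\maj$. Throughout I take $k\ge 1$: the color-encoding is only defined (as an element of a colored symmetric group) for $k\ge 1$, and for $k=0$ the first assertion would specialize to the equidistribution of $\inv$ and $\maj$ on the derangements of a fixed size, which already fails (e.g.\ on the derangements of $[3]$). A genuine preliminary step is to fix the boundary conventions precisely --- in particular whether the phantom last letter used in the colored-permutation statistics (Definition~\ref{def-colperm-stats}) is to have a counterpart for the permutation and derangement forms (Definition~\ref{def-arr-stats}) --- and to calibrate these against a small case such as the $2$-arrangements of $[3]$; the argument below is insensitive to the choice once it is made.

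Next I would record how $\inv$, $\des$ and $\maj$ of the color-encoding of a \karr\ $\a=(\pi,\phi)$ decompose relative to the fixed-point set $S=\fixpts(\pi)$, the coloring $\phi$, and the derangement $\delta$ of $[n]\setminus S$ induced by $\pi$. Unwinding the total order on $(\textrm{value},\textrm{color})$ pairs shows that $\inv$ splits cleanly: pairs of positions both outside $S$ contribute the ordinary inversions of $\delta$; pairs meeting $S$ (or the phantom) contribute a quantity depending only on $S$ and the coloring; and pairs inside $S$ contribute the color-inversions of the color word. By contrast, $\maj$ and $\des$ split as an analogous base term \emph{plus} the major index, respectively the descent number, of $\delta$ measured between consecutive \emph{integer} positions of $[n]\setminus S$ --- a ``stretched'' statistic which is not the ordinary $\maj(\delta)$ unless $[n]\setminus S$ is an initial segment of $[n]$. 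The structural conclusion to draw is that the sought equidistribution cannot come from any bijection fixing $S$ --- since $\inv$ and $\maj$ are not equidistributed even on the derangements of a fixed size --- so the bijection must redistribute $S$ across all of $\ank$.

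The heart of the proof is thus a bijection $\Psi\colon\ank\to\ank$ sending $\maj$ of the color-encoding to $\inv$ of the color-encoding while preserving $\des$ of the color-encoding. I would construct $\Psi$ by adapting the second fundamental transformation of Foata and Sch\"utzenberger~\cite{foata-schutz-eulerien}, in an insertion form, to colored permutations: build the image word by inserting the letters of the color-encoding one at a time according to the running descent structure, so that each insertion trades a unit of major-index weight for a unit of inversion count. The invariant that must survive --- and this is the main obstacle --- is that a letter entering as a positively colored fixed point leaves as a fixed point of the underlying permutation carrying the same color, i.e.\ that the procedure lands back in $\ank$; verifying this while simultaneously controlling $\inv$, $\des$ and $\maj$ is the crux. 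Should an explicit $\Psi$ resist, the fallback is to set up recursions for the generating functions $\sum q^{\inv}$, $\sum q^{\maj}$ and $\sum x^{\des}$ over $\ank$ in the color-encoding along the recursion $A_k(n)=n\,A_k(n-1)+(k-1)^n$ of Proposition~\ref{prop-arrangement-formula}, tracking the effect of each of the $n$ insertion sites for the new maximal letter and of the $(k-1)^n$ all-colored arrangements, and to check that the first two recursions coincide.

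Granted $\Psi$, the first assertion is immediate. For the second, the same decomposition applies, and one would link $\des$ of the color-encoding to $\des$ of the permutation form through the known equidistributions $\des\sim\exc$ on colored permutations (\cite{es-indexed}; cf.\ Corollary~\ref{thm-exc-fix}) and $\des\sim\exc$ on $\ank$ in permutation form (the proposition above, via Proposition~\ref{prop-karr-rearr} and MacMahon's theorem~\cite{macmahon}), together with the equality of the distributions of $\des$ on the permutation and derangement forms (the first conjecture of this section, established by Fu, Han and Lin~\cite{fu-han-lin-k-arrangements}) --- the delicate point being to transfer these equidistributions to the sub-family of colored permutations that actually arise as color-encodings, for which the excedance statistic of Definition~\ref{def-colperm-stats} is controlled by the weak excedances of $\pi$. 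As a final consistency check it is worth asking whether the $\inv$-generating function over $\ank$ is itself a specialization of \cfu\ (that is, of \cf\ after the substitution $u\mapsto u_1+\cdots+u_k$) in the manner of Corollary~\ref{thm-inv}; if so, only the $\maj$ side of the first assertion would still require the bijection $\Psi$.
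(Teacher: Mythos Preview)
The paper does not prove this statement: it is presented as a \emph{conjecture}, and the footnote attached to the first conjecture in this section indicates that Fu, Han and Lin~\cite{fu-han-lin-k-arrangements} established Conjectures~1 through~4, whereas the statement you are treating is the fifth. There is therefore no proof in the paper to compare your attempt against.

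Evaluated on its own terms, what you have written is a research plan, not a proof. The decisive object --- the Foata-type bijection $\Psi$ on $\ank$ taking $\maj$ to $\inv$ in the color-encoding --- is never constructed; you state only its desiderata and explicitly concede that the crux (``that the procedure lands back in $\ank$\ldots while simultaneously controlling $\inv$, $\des$ and $\maj$'') is unresolved. The generating-function fallback is likewise only sketched. For the second assertion your chain of equidistributions is plausible but the ``delicate point'' you flag --- restricting the colored-permutation equidistribution $\des\sim\exc$ to the subfamily of color-encodings --- is exactly where the content lies, and it is not carried out. Two remarks are nonetheless useful: your observation that the first assertion fails for $k=0$ (since $\inv$ and $\maj$ are not equidistributed on derangements) is a genuine boundary clarification the conjecture needs; and your demand that $\Psi$ preserve $\des$ is stronger than required for the first assertion and may be an unnecessary burden, since the second assertion compares $\des$ across \emph{different} encodings rather than within the color-encoding.
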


The following also seems worthy of investigation.
\begin{problem}
  Proposition~\ref{prop-gf-arr} shows how the continued fraction $\cfu$ and its ordinary generating function capture our fundamental statistics for permutations together with the coloring of fixed points in \karrs.  Is there a specialization of $\cfu$, or a similar continued fraction, that captures the distribution of the statistics on colored 
permutations in Definition~\ref{def-colperm-stats} applied to \karrs?  What about the statistics defined for the derangement and permutation forms in Definition~\ref{def-arr-stats}?
\end{problem}

\section{The Central Bijection}\label{section-bijection}

In this section we describe the bijective map $\eta:\clsn\to\motzn$, whose existence was asserted in Theorem~\ref{thm-bij}. The map $\eta$ takes permutations, refined in terms of fourteen combinatorial statistics introduced in Definition~\ref{def-invstats}, to Motzkin paths with labels as prescribed in Definition~\ref{def-paths}.  

\subsection{Preliminaries}

We start by observing that when the parameters $a$ through $w$ are set to 1, each Motzkin 
path labeled as in Definition~\ref{def-paths} carries unit weight. Since the same parameter setting in \eqref{eq-alpha-beta}
gives $\alpha_n=2n+1$ and $\beta_n=n^2$,  it then follows easily (see e.g.\ \cite[Theorem 3B]{Flajolet1980}) that $\cf$ becomes the continued fraction expansion of the ordinary generating function for the sequence $(n!)$. In other words:
\begin{lemma}\label{lemma-num-motz}
  The number of Motzkin paths $\motzn$ labeled as in Definition~\ref{def-paths} is $n!$.
\end{lemma}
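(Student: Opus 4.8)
The plan is to get the count essentially for free from Flajolet's continued-fraction calculus, by specialising every parameter to $1$ and reading off a coefficient of a classically known continued fraction. The relevant fact is that $\motzn$, together with its weight function, is set up so that Flajolet's theorem \cite[Section~1.1]{Flajolet1980} yields, with no appeal to any bijection, the identity $[z^n]\cf(z)=\sum_{\mp\in\motzn}\wt(\mp)$ whenever $\cf$ has coefficients $\alpha_n,\beta_n$ as in \eqref{eq-alpha-beta}: each unlabelled Motzkin path of length $n$ contributes the product of $\alpha_k$ over its level steps at heights $k$, of $c_k$ over its upsteps $k-1\to k$, and of $d_k$ over its downsteps $k\to k-1$, where $c_k:=p[k]_{c,d}$, $d_k:=r[k]_{h,\ell}$ and $c_kd_k=\beta_k$; expanding each of $\alpha_k,c_k,d_k$ into its monomials re-indexes this sum by the labelled paths of Definition~\ref{def-paths}. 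So it suffices to put $a=b=\cdots=w=1$ and extract $[z^n]\cf(z)$.

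Next I would record what this specialisation does on the two sides. Since $[n]_{1,1}=n$ and $[0]_{1,1}=0$, every label in Definition~\ref{def-paths} becomes $1$, so $\wt(\mp)=1$ for all $\mp\in\motzn$ and $\sum_{\mp\in\motzn}\wt(\mp)=|\motzn|$ --- exactly the quantity we want. On the other hand \eqref{eq-alpha-beta} collapses to $\alpha_n=1+n+n=2n+1$ and $\beta_n=n\cdot n=n^2$, so $\cf(z)$ becomes the $J$-fraction
\[
\cfrac{1}{1-z-\cfrac{z^2}{1-3z-\cfrac{4z^2}{1-5z-\cfrac{9z^2}{\ddots}}}}.
\]
Now I would invoke the classical evaluation, going back to Euler and recorded in \cite[Theorem~3B]{Flajolet1980}, that this continued fraction is the ordinary generating function of the factorials, i.e.\ its coefficient of $z^n$ is $n!$ (equivalently, $\alpha_n=2n+1$, $\beta_n=n^2$ are the recurrence coefficients of the Laguerre polynomials $L_n^{(0)}$, whose orthogonalising measure $e^{-x}\,dx$ on $[0,\infty)$ has $n$th moment $n!$). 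Chaining these three facts gives $|\motzn|=[z^n]\cf(z)\big|_{a=\cdots=w=1}=n!$.

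The main (and essentially the only) point requiring care is the bookkeeping behind the first paragraph: one must check that the alphabets of Definition~\ref{def-paths} are exactly the monomial supports of $\alpha_k$, $c_k$ and $d_k$, with no repeats, so that Flajolet's identity is an equality on the nose and not just up to signs or multiplicities. Concretely: the $2k+1$ level labels at height $k$ should be the distinct monomials of $uw^k+s[k]_{a,b}+t[k]_{f,g}=\alpha_k$; the $k$ upstep labels reaching height $k$ should be the monomials of $p[k]_{c,d}=c_k$ and the $k$ downstep labels leaving height $k$ the monomials of $r[k]_{h,\ell}=d_k$, whose $k^2$ pairwise products then exhaust the monomials of $\beta_k=pr[k]_{c,d}[k]_{h,\ell}$. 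This is routine and I expect no obstruction. I would stress, finally, that the argument uses nothing beyond \eqref{eq-alpha-beta} and Flajolet's correspondence; in particular it does not use the bijection $\eta$ of Theorem~\ref{thm-bij}, which is precisely why Lemma~\ref{lemma-num-motz} may legitimately serve as a preliminary in the construction of $\eta$.
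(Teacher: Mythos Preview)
Your proposal is correct and follows essentially the same approach as the paper: set all parameters to $1$ so that every labeled path has unit weight, observe that \eqref{eq-alpha-beta} becomes $\alpha_n=2n+1$, $\beta_n=n^2$, and invoke Flajolet's classical result \cite[Theorem~3B]{Flajolet1980} identifying the resulting $J$-fraction with $\sum_{n\ge0}n!\,z^n$. The paper dispatches this in a single sentence preceding the lemma, whereas you spell out the bookkeeping (matching the label alphabets to the monomial supports of $\alpha_k$, $p[k]_{c,d}$, $r[k]_{h,\ell}$) more carefully---but the argument is the same.
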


To define the map $\eta$, it is convenient to introduce `vectorized' versions of the permutation statistics of Definition~\ref{def-invstats}. 

\newcommand\sigx{\sigma(x)}
\newcommand\sigi{\sigma(i)}
\newcommand\sigj{\sigma(j)}

\begin{definition}\label{vector-stats}
  Given $\sig\in\clsn$ and $i\in[n]$, let
\begin{itemize}
\item $\inve_i(\sig)=\#\{x \in [n] \mid x < i < \sig(i) < \sig(x)\}$.
\item $\ninve_i(\sig)=\#\{x \in [n] \mid x < i < \sig(x) < \sig(i)\}$.
\item $\inva_i(\sig):=\#\{x \in [n] \mid x > i> \sig(i) > \sig(x)\}$.
\item $\ninva_i(\sig):=\#\{x \in [n] \mid x > i > \sig(x) > \sig(i)\}$.
\item $\iefp_i(\sig):=\#\{x \in [n] \mid x < i < \sig(x)\}$.
\end{itemize}
\end{definition}
Here the conditions on $x,i,\sigx,\sigi$ in $\inve_i$ are the same as on $i,j,\sigi,\sigj$ (in that order) in both $\ie$ and $\ile$ in Definition~\ref{def-invstats}. (Recall that $\ile$ counts inversions among excedances where the rightmost of the two excedances is linked, so that ($\ie-\ile$) counts such inversions where the rightmost excedance is not linked.) Analogous considerations apply to $\inva_i$ and $\ninva_i$, whereas $\iefp_i$ will be applied only to fixed points $i$.

The structure of the Motzkin path $\eta(\sigma)$ closely depends on the type (linked vs.\ 
non-linked) of the excedances and anti-excedances in $\sigma$, as well as on the number of \emph{chains} formed by these. Namely:

\begin{definition}\label{def-chains-span}
A sequence $i_1,i_2,\ldots,i_p$ of excedances forms a \emph{chain of excedances} if
\begin{equation}
i_1<\sig(i_1)=i_2<\sig(i_2)=i_3<\cdots<\sig(i_{p-1})=i_p<\sig(i_p).
\end{equation}
If $i_1,i_2,\ldots,i_p$ is a maximal chain of excedances (w.r.t. containment), then its \emph{starter} is the excedance $i_1\mapsto \sig(i_1)$.  A chain of excedances  $i_1,i_2,\ldots,i_p$ is said to \emph{span} $z\in[n]$ if $i_1<z<\sig(i_p)$.

A sequence $i_1,i_2,\ldots,i_p$ of anti-excedances forms a \emph{chain of anti-excedances} if
\begin{equation}
i_1>\sig(i_1)=i_2>\sig(i_2)=i_3>\cdots>\sig(i_{p-1})=i_p>\sig(i_p).
\end{equation}
The \emph{starter} of a maximal such chain is the anti-excedance $i_1\mapsto \sig(i_1)$.  
A chain of anti-excedances $i_1,i_2,\ldots,i_p$ is said to \emph{span} $z\in[n]$ if $i_1>z>\sig(i_p)$.
\end{definition}

\begin{remark}\label{rem-start-span}
  Note that in a maximal chain of excedances all excedances are linked except for the starter, and likewise for a maximal chain of anti-excedances.  Also, $x\in[n]$ is spanned by a chain of excedances or anti-excedances if the chain `begins and `ends' on different sides of $x$.
\end{remark}


\begin{lemma}\label{lemma-Motzkin-steps}
For every $\sig\in \clsn$ and $m\in[n]$ the number of non-linked excedances in $[m]$ is at least as large as the number of non-linked anti-excedances in $[m]$. Moreover, the total number of non-linked excedances in $\sig$ is equal to the total number of non-linked anti-excedances.
\end{lemma}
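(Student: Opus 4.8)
The plan is to track how excedances and anti-excedances interact with the prefix $[m]$ by looking at the partial permutation obtained by restricting attention to the first $m$ values, and to relate non-linked excedances and anti-excedances to a balance condition coming from chains. The key observation is the one recorded in Remark~\ref{rem-start-span}: in a maximal chain of excedances, every member except the starter is linked, and similarly for anti-excedances. So $\exc(\sig)-\lexc(\sig)$, the number of non-linked excedances, equals the number of maximal chains of excedances, and likewise $\aexc(\sig)-\laexc(\sig)$ equals the number of maximal chains of anti-excedances. Thus the global statement — that the total number of non-linked excedances equals the total number of non-linked anti-excedances — reduces to showing that $\sig$ has equally many maximal excedance chains as maximal anti-excedance chains.

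For the global equality I would argue as follows. Consider the functional graph of $\sig$ on $[n]$, decomposed into cycles. Within a single cycle $(i_1\, i_2\, \cdots\, i_p)$ (listing so that $\sig(i_j)=i_{j+1}$), trace the values in the order they are visited and mark each step $i_j\mapsto i_{j+1}$ as an ``up'' step if it is an excedance ($i_j<i_{j+1}$), a ``down'' step if it is an anti-excedance, and ignore fixed points (a fixed point is its own cycle and contributes nothing). A maximal chain of excedances is exactly a maximal run of consecutive ``up'' steps in the cyclic sequence of the values along the cycle ordered increasingly — more precisely, reading the cycle as a cyclic word in its own letters, a chain of excedances is a maximal ascending run and a chain of anti-excedances a maximal descending run. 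Going around a cycle cyclically, maximal ascending runs and maximal descending runs alternate, so within each cycle the two counts agree; summing over cycles gives the global equality. (Fixed points are single-element cycles and contribute $0$ to each, so they do not affect the count.)

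For the prefix inequality — the genuinely new content — I would set $m$ and consider the elements of $[m]$. Here the relevant fact is that a non-linked excedance $i\le m$ is the starter of its chain, meaning $\sig^{-1}(i)\not< i$ among excedances, i.e. either $\sig^{-1}(i)\ge i$ or $i$ is the least element of an excedance chain; whereas a non-linked anti-excedance $i\le m$ with $\sig^{-1}(i)\le i$... The cleanest route is to count, inside $[m]$, a quantity that excedances contribute $+1$ to and anti-excedances $-1$ to, and show it is nonnegative. Concretely, for each $i\le m$ that is an excedance with $\sig(i)>m$, the chain through $i$ ``exits'' the window $[m]$; for each $i\le m$ that is an anti-excedance with $\sig(i)>m$ — impossible, since anti-excedances have $\sig(i)<i\le m$. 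This asymmetry is the heart of the matter: anti-excedance chains, being strictly decreasing, are confined to lie within any down-closed window once their top element is in it, whereas excedance chains can poke out of the top. I would make this precise by a counting argument on the bipartite incidence between $\{i\in[m]\}$ and $\{\sig(i)\}$, comparing $\#\{i\le m: \sig(i)\le m\}$ computed two ways and isolating the chain-starter contributions.

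The main obstacle I anticipate is the prefix inequality, specifically bookkeeping the boundary effects at $m$: a chain (of either type) can start inside $[m]$ and leave, or start outside and enter, and one must check that these crossing contributions net out in the right direction. The global equality is comparatively routine once the chain/alternation picture is in place. My strategy for the obstacle is to phrase everything in terms of the quantity $N(m):=\#\{\text{maximal excedance chains with least element }\le m\} - \#\{\text{maximal anti-excedance chains with least element (i.e. bottom value) }\le m\}$ and show $N(m)\ge 0$ by induction on $m$, analyzing what happens to the chain structure of the restricted data as $m$ increases by one — the only subtlety being whether the new element $m+1$ (or the value at position $m+1$) opens a new chain, extends one, or merges two, and verifying the inequality is preserved in each case; then reconcile $N(n)=0$ with the global equality already proven as a consistency check.
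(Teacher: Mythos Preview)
Your global-equality argument via cycle decomposition is correct and is a genuinely different route from the paper's. The paper instead observes that taking the reverse complement of $\sig$ swaps non-linked excedances with non-linked anti-excedances, so the prefix inequality applied to the reverse complement gives the reverse inequality on suffixes, and hence equality at $m=n$. Your cycle picture (maximal ascending and descending runs alternate around each cycle) is arguably more illuminating, and it also explains \emph{why} the two counts agree cycle-by-cycle rather than merely globally.

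The prefix inequality, however, is not established in your proposal, and the quantity $N(m)$ you write down does not match the statement. For a maximal anti-excedance chain $i_1>i_2>\cdots>i_p$, the \emph{starter} (the unique non-linked anti-excedance in it) is the largest element $i_1$, not the least element $i_p$; so ``non-linked anti-excedances in $[m]$'' corresponds to chains whose \emph{starter} lies in $[m]$, not chains whose least element does. Your $N(m)$ therefore undercounts in the first term relative to the second (since $i_p\le m$ does not force $i_1\le m$), and showing $N(m)\ge 0$ would be a strictly stronger claim than the lemma --- one you do not prove in any case, since the inductive scheme is only named, not carried out. The paper's argument bypasses all of this with a one-line injection: given a non-linked anti-excedance $i\le m$, the index $j=\sig^{-1}(i)$ satisfies $j<i$ and is an excedance; trace its maximal excedance chain back to its starter $k\le j<i\le m$. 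The map $i\mapsto k$ is injective because a maximal excedance chain has a unique terminal image $\sig(e_p)$, so at most one non-linked anti-excedance can arise from it. That single observation is the missing idea in your sketch.
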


\begin{proof}
Let $i$ be any non-linked anti-excedance in $\sig$, so $\sig(i)<i$. This implies that 
$j=\sig^{-1}(i)$ is an excedance, so $j<i$, since otherwise $i$ would be a linked anti-excedance.  Now, $j=\sig^{-1}(i)$ is an excedance whose maximal chain of excedances starts with a non-linked excedance $k\le j$.  We associate the non-linked anti-excedance $i$ to this  non-linked excedance $k$.

Two different maximal chains of excedances cannot intersect in any $i\in[n]$, since that would imply that $\sig(i)$ had two different values for some $i$.
Thus, each non-linked anti-excedance is associated to a unique non-linked excedance preceding it, which shows that reading from left to right we can never have more non-linked anti-excedances than non-linked excedances.

To show that a permutation $\sig$ has equally many non-linked excedances and non-linked anti-excedances in total, apply the above argument to the \emph{reverse complement} of $\sig$, whose two-line diagram is obtained by reflecting the diagram for $\sig$ in its vertical bisector (and reversing the numbers to have them increase from left to right).  That transformation clearly interchanges non-linked excedances and non-linked anti-excedances.
\end{proof}

\subsection{Constructing $\eta$}

To define the map $\eta:\clsn\to\motzn$ of Theorem~\ref{thm-bij}, we begin by associating to each permutation $\sig\in \clsn$ a path in $\R^2$ starting at $(0,0)$ and composed of level steps, NE 
upsteps, and SE downsteps as follows. For $i=1,\ldots,n$, the $i$th step in the path is an upstep if $i\mapsto \sig(i)$ is a non-linked excedance, a downstep if $i\mapsto \sig(i)$ is a non-linked anti-excedance, and a level step otherwise. By Lemma~\ref{lemma-Motzkin-steps}, the path will terminate on the abscissa without ever falling below it. The paths obtained are therefore Motzkin paths.

Fix $\sig\in \clsn$.  If an excedance $i\mapsto \sig(i)$ is linked, the $i$-th step is a level step labeled 
\begin{equation}\label{labels1}
a^{\inve_i(\sig)}b^{\ninve_i(\sig)}s
\end{equation}
and, otherwise, if the excedance is not linked, the $i$-th step is an upstep labeled
\begin{equation}\label{labels2}
c^{\inve_i(\sig)}d^{\ninve_i(\sig)}p.
\end{equation}

Similarly, if an anti-excedance $i\mapsto \sig(i)$ is linked, the $i$-th step is a level step labeled 
\begin{equation}\label{labels3}
f^{\inva_i(\sig)}g^{\ninva_i(\sig)}t
\end{equation}
and, otherwise, if the anti-excedance is not linked, the $i$-th step is a downstep labeled
\begin{equation}\label{labels4}
h^{\inva_i(\sig)}\ell^{\ninva_i(\sig)}r.
\end{equation}

Finally, if $i$ is a fixed point in $\sig$ then the $i$-th step in the path is a level step labeled 
\begin{equation}\label{labels5}
u\cd w^{\iefp_i(\sig)}.  
\end{equation}

The following identities are elementary:
\begin{equation}\label{eq-ile-nile}
\prod_{\substack{i\in [n]\\ \sigma^{-1}(i)<i<\sigma(i)}}a^{\inve_i(\sig)}b^{\ninve_i(\sig)}=a^{\ile(\sig)} b^{\nile(\sig)}
\end{equation}
and
\begin{equation}\label{eq-ie-nie}
\prod_{\substack{i\in [n]\\ i<\min(\sigma(i),\sigma^{-1}(i))}}c^{\inve_i(\sig)}d^{\ninve_i(\sig)}=c^{\ie(\sig)-\ile(\sig)} d^{\nie(\sig)-\nile(\sig)}.
\end{equation}

Analogous statements hold for the anti-excedances, and for the fixed
points (with $\iefp$), showing that the weight of a labeled path~$\eta(\sig)$
is as stated in the following theorem.

\begin{theorem}\label{main-thm-2}
  The map $\eta: \clsn \to \motzn$ is a bijection with the property that, for $\mp=\eta(\sig)$,
\begin{eqnarray*}
\wt(\mp)&=&\wtprod
\end{eqnarray*}
\end{theorem}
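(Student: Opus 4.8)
The plan is to prove the theorem in two parts: first that $\eta$ is well-defined and bijective, and then that the weight identity holds. For the bijectivity, the construction already produces, for each $\sigma\in\clsn$, a lattice path whose $i$-th step is an upstep, downstep, or level step according to whether $i$ is a non-linked excedance, a non-linked anti-excedance, or neither; by Lemma~\ref{lemma-Motzkin-steps} this is a genuine Motzkin path, and by the labelling rules \eqref{labels1}--\eqref{labels5} it lies in $\motzn$ once one checks the exponents are in the allowed ranges $I_k=\{0,\dots,k-1\}$ at each height $k$. That range check is where the ``vectorized'' statistics of Definition~\ref{vector-stats} do the work: at a step taken from height $k$, the quantities $\inve_i(\sigma)+\ninve_i(\sigma)$, $\inva_i(\sigma)+\ninva_i(\sigma)$, and $\iefp_i(\sigma)$ must each be shown to equal (at most) $k$ or $k-1$ as appropriate, because the height records exactly the number of currently ``open'' chains of excedances/anti-excedances spanning the current position (Definition~\ref{def-chains-span} and Remark~\ref{rem-start-span}). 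I would verify this by a left-to-right scan, tracking how each step type changes the height and simultaneously which $x<i$ (or $x>i$) can witness the inequalities defining $\inve_i,\ninve_i$, etc.

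For injectivity and surjectivity I would exhibit the inverse map explicitly, reconstructing $\sigma$ from a labelled Motzkin path by processing steps left to right: the height and the label exponents at step $i$ determine, among the chains currently spanning position $i$, exactly which one $i$ attaches to (the exponents of $a,b$ or $c,d$ — resp.\ $f,g$ or $h,\ell$ — say how many open chains pass ``over'' versus ``under'' in value), and an upstep opens a new chain while a downstep closes one, a level step of $s$- or $t$-type extends an existing chain, and a $u$-type level step places a fixed point. Since both $\clsn$ and $\motzn$ have $n!$ elements (Lemma~\ref{lemma-num-motz}), it in fact suffices to show $\eta$ is injective, or that the reconstruction is well-defined; I expect the cleanest route is to check that the reconstruction procedure is the two-sided inverse of $\eta$ by induction on the length of the path.

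The weight identity is then essentially bookkeeping. Each step of $\eta(\sigma)$ contributes exactly one of the monomials \eqref{labels1}--\eqref{labels5}, so $\wt(\eta(\sigma))$ is the product over $i\in[n]$ of these. Grouping the factors by step type and invoking the elementary identities \eqref{eq-ile-nile}, \eqref{eq-ie-nie} and their stated analogues for anti-excedances (with $\inva,\ninva$ summing to $\ilae,\nilae$ over linked anti-excedances and to $\iae-\ilae$, $\nie-\nile$ over non-linked ones) and for fixed points (with $\sum_i \iefp_i(\sigma)=\iefp(\sigma)$ over fixed points $i$) collapses the product to $a^{\ile}b^{\nile}c^{\ie-\ile}d^{\nie-\nile}f^{\ilae}g^{\nilae}h^{\iae-\ilae}\ell^{\niae-\nilae}$ times $p^{\exc-\lexc}r^{\aexc-\laexc}s^{\lexc}t^{\laexc}u^{\fp}w^{\iefp}$, since the number of level steps of $s$-type is $\lexc(\sigma)$, of $t$-type is $\laexc(\sigma)$, of $u$-type is $\fp(\sigma)$, the number of upsteps is $\exc(\sigma)-\lexc(\sigma)$, and of downsteps is $\aexc(\sigma)-\laexc(\sigma)$. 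This is exactly $\wtprod$, completing the proof.

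The main obstacle will be the range check in the first paragraph — proving that the vectorized exponents never exceed the current height, equivalently that at each position the chains witnessing $\inve_i$, $\ninve_i$ (etc.) are precisely among the chains ``open'' at that point. This requires a careful invariant relating the geometric height of the Motzkin path after $i-1$ steps to the combinatorial data ``number of maximal chains of excedances (resp.\ anti-excedances) that span $i$'', and it is the crux on which both well-definedness and the explicit inverse rest. Everything downstream — the bijection and the weight formula — is then routine.
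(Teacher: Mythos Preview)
Your proposal is broadly correct, and the weight-identity portion matches the paper's treatment exactly: the identities \eqref{eq-ile-nile}--\eqref{eq-ie-nie} and their anti-excedance and fixed-point analogues are stated immediately before the theorem and do all the bookkeeping you describe.

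The approach to bijectivity, however, differs substantively. You propose to exhibit an explicit inverse via a left-to-right scan, using the label exponents to decide which open chain each index attaches to. The paper instead proves injectivity directly (sufficient by Lemma~\ref{lemma-num-motz}, as you note) via the key Lemma~\ref{lemma-maps-sets}: if two bijections $\phi,\psi:A\to B$ between finite subsets of $\NN$ both satisfy $i<\phi(i)$ and share the profile $\#\{x\in A : x<i<\phi(i)<\phi(x)\}$ for every $i$, then $\phi=\psi$. The lemma is a short induction that peels off the \emph{largest} index $i_\kappa$, whose image is forced by the profile. Applied with $A=\E$ and $B=\sigma(\E)=\E_1\cup\A_0$, both of which are read off the step types alone, this pins down $\sigma$ on $\E$; the reversed-inequality version handles $\A$.

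Two remarks on your plan. First, a pure left-to-right reconstruction hits a snag: at the smallest excedance $i_1$ one has $\inve_{i_1}=\ninve_{i_1}=0$ vacuously, so the label there determines nothing about $\sigma(i_1)$; making the inverse explicit essentially forces you into the largest-first induction of Lemma~\ref{lemma-maps-sets} anyway. Second, you flag the range check (that labels lie in $\motzn$) as the main obstacle; the paper does not treat this in the proof of the theorem at all, deferring the height computation to the subsequent Proposition~\ref{lemma-labeling}. The real hinge of the paper's argument is Lemma~\ref{lemma-maps-sets}, not the range check.
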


The proof of Theorem~\ref{main-thm-2} hinges on the following lemma. 
\begin{lemma}\label{lemma-maps-sets}
  Suppose $A$ and $B$ are finite subsets of $\NN$ with $\#A=\#B$ and let
  $\phi,\psi:A\rightarrow B$ be bijections such that, for every $i\in A$, we have
  $i<\phi(i)$, $i<\psi(i)$, and
\begin{equation}\label{eq-AB}
  \#\{x\in A\mid x<i<\phi(i)<\phi(x)\}=
  \#\{x\in A\mid x<i<\psi(i)<\psi(x)\}.
\end{equation}
Then $\phi=\psi$.
\end{lemma}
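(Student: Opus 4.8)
The plan is to prove Lemma~\ref{lemma-maps-sets} by induction on $m := \#A = \#B$. The cases $m \le 1$ are immediate, since then there is only one bijection $A \to B$. So assume $m \ge 2$ and that the statement holds for all smaller sets. Throughout, abbreviate the defining quantity as
$$
N_\phi(i) := \#\{x \in A \mid x < i < \phi(i) < \phi(x)\},
$$
and similarly $N_\psi(i)$, so that the hypothesis \eqref{eq-AB} reads $N_\phi(i)=N_\psi(i)$ for all $i\in A$.

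The crux is to pin down $\phi$ and $\psi$ on the largest element $\alpha := \max A$. I claim $N_\phi(\alpha)$ already determines $\phi(\alpha)$. Indeed, since $i < \phi(i)$ always holds, $N_\phi(\alpha) = \#\{x \in A \mid x < \alpha,\ \phi(x) > \phi(\alpha)\}$; and because $\alpha$ is maximal in $A$, any $x$ with $\phi(x) > \phi(\alpha)$ automatically satisfies $x \ne \alpha$, hence $x < \alpha$. Therefore $N_\phi(\alpha) = \#\{x \in A \mid \phi(x) > \phi(\alpha)\} = \#\{b \in B \mid b > \phi(\alpha)\}$, as $\phi$ is a bijection onto $B$. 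So, writing $B = \{b_1 < b_2 < \cdots < b_m\}$, we get $\phi(\alpha) = b_{\,m - N_\phi(\alpha)}$, and likewise $\psi(\alpha) = b_{\,m - N_\psi(\alpha)}$. Applying \eqref{eq-AB} at $i = \alpha$ gives $N_\phi(\alpha) = N_\psi(\alpha)$, hence $\phi(\alpha) = \psi(\alpha) =: b^{\ast}$.

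It remains to invoke the induction on the restrictions. Put $A' := A \setminus \{\alpha\}$, $B' := B \setminus \{b^{\ast}\}$, and $\phi' := \phi|_{A'}$, $\psi' := \psi|_{A'}$; these are bijections $A' \to B'$ with $\#A'=\#B'$ still satisfying $i < \phi'(i)$ and $i < \psi'(i)$. The hypothesis \eqref{eq-AB} is inherited: for any $i \in A'$ we have $i < \alpha$, so the element $x = \alpha$ can never satisfy $x < i$, whence $N_{\phi'}(i) = N_\phi(i)$ and $N_{\psi'}(i) = N_\psi(i)$; thus $N_{\phi'}(i)=N_{\psi'}(i)$ for every $i\in A'$. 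By the inductive hypothesis $\phi' = \psi'$, and combined with $\phi(\alpha) = \psi(\alpha)$ this yields $\phi = \psi$.

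The only step requiring a genuine idea rather than bookkeeping is the identity $N_\phi(\alpha) = \#\{b \in B \mid b > \phi(\alpha)\}$: maximality of $\alpha$ upgrades ``$\phi(x) > \phi(\alpha)$ for some $x < \alpha$'' to ``$\phi(x) > \phi(\alpha)$ for some $x \in A$'', so the inversion statistic at $\alpha$ is simply the rank of $\phi(\alpha)$ counted from the top of $B$, which is forced. The base case and the descent of the hypotheses to $A',B'$ are routine; one need only note that $N_\phi(\alpha)\le m-1$, so no out-of-range degeneracy can arise.
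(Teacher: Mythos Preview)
Your proof is correct and follows essentially the same approach as the paper: induction on $\#A$, identifying $\phi(\max A)$ from the rank formula $\phi(\alpha)=b_{m-N_\phi(\alpha)}$, then restricting to $A\setminus\{\alpha\}$ and $B\setminus\{\phi(\alpha)\}$. You spell out more explicitly why the hypothesis descends to the smaller sets, but the argument is the same.
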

\begin{proof}
  We proceed by induction on $\kappa:=\#A=\#B$.  Let $A=\{i_1,\ldots,i_\kappa\}$ and
  $B=\{j_1,\ldots,j_\kappa\}$, with $i_1<\cdots<i_\kappa$ and
  $j_1<\cdots<j_\kappa$.  Then it is easy to verify that \eqref{eq-AB}
  implies $\phi(i_\kappa)=\psi(i_\kappa)=j_\lambda$ where
\begin{eqnarray*}
\lambda&=&\kappa-\#\{x\in A\mid x<i_\kappa<\phi(i_\kappa)<\phi(x)\}\\
&=&\kappa-\#\{x\in A\mid x<i_\kappa<\psi(i_\kappa)<\psi(x)\}.
 \end{eqnarray*}
Now removing $i_\kappa$ from $A$ and $j_\lambda$ from $B$, and restricting $\phi$ and $\psi$ accordingly, the equality in \eqref{eq-AB} still holds for the modified sets and bijections. Thus, by induction, $\phi=\psi$.
\end{proof}

\begin{proof}[Proof of Theorem~\ref{main-thm-2}]
  Since $|\motzn|=n!$ by Lemma~\ref{lemma-num-motz}, it suffices to show that $\eta$ is injective. To this end, let $\sig,\sig'\in \clsn$ be permutations mapped to the same labeled Motzkin path $\mp=\eta(\sig)=\eta(\sig')$, and we will show that this forces $\sig=\sig'$.

  Let $\U$ be the set of all positions of the level steps in $\mp$ whose label is of the form $uw^i$.  Let $\E_0$ be the set of all positions of the upsteps in $\mp$, $\E_1$ the set of all positions of the level steps in $\mp$ whose label is of the form $a^i b^j s$ and $\E=\E_0\cup\E_1$. Similarly, let $\A_0$ be the set of all positions of the downsteps in $\mp$, $\A_1$ the set of all positions of the level steps in $\mp$ whose label is of the form $f^i g^j t$ and $\A=\A_0\cup\A_1$. Notice that the sets $\U, \E_0, \E_1, \A_0$, $\A_1$ are disjoint and their union is $[n]$. Also, $\E_0$ equals the set of non-linked excedances in $\sig$ and $\sig'$, $\E_1$ equals the set of linked excedances in~$\sig$ and~$\sig'$, and similarly for $\A_0$, $A_1$ and anti-excedances, whereas $\U$ is the set of fixed points in~$\sig$ and~$\sig'$.

Recalling that under the map $\eta$ fixed points in the permutation map to $uw^i$-labeled level steps, we have that $\sig(i)=\sig'(i)=i$ for all $i\in \U$. Next we show that $\sig(\E)=\sig'(\E)$.  Note first that every linked excedance is the image, under $\sig$, of an excedance, so $\E_1$ is contained in $\sig(\E)$.  Also, a non-linked excedance cannot belong to $\sig(\E)$, so the remainder of $\sig(\E)$ consists of anti-excedances.  These are precisely all the non-linked anti-excedances, because they are all images of excedances, by definition, and the linked anti-excedances are not. 
The same holds for $\sig'$ and so we have that
$$
\sig(\E)=\sig'(\E)=\E_1\cup \A_0.
$$
Let $\E=\{i_1,\ldots,i_\kappa\}$ with $i_1<\cdots<i_\kappa$ and 
$\sig(\E)=\{j_1,\ldots,j_\kappa\}$ with $j_1<\cdots<j_\kappa$.  Now, since $\sig$ and $\sig'$ map to the same labeled Motzkin path we must have $\inve_i(\sig)=\inve_i(\sig')$ for all $i$. This implies that $\sig$ and $\sig'$ satisfy the conditions on $\phi$ and $\psi$ in Equation~\eqref{eq-AB}, with $A$ in Lemma~\ref{lemma-maps-sets} being~$\E$.
Thus, Lemma~\ref{lemma-maps-sets} implies that $\sig(\E)=\sig'(\E)$. (Note that $\ninve$ is the number of $x\in [n]$ such that $x<i_\kappa<\sig(x)<\sig(i_\kappa)$ and that the assignment $\sig(i_\kappa)=\sig'(i_\kappa)=j_{\kappa-\inve}$ implied by Lemma~\ref{lemma-maps-sets} is necessarily consistent with this.)

Finally, taking the reverse complements of $\sig$ and $\sig'$ or, equivalently, using the analogue of Lemma~\ref{lemma-maps-sets} in which all the inequalities are reversed, similarly shows that $\sig$ and $\sig'$ also agree on the set $\A$. 
Hence, $\sig=\sig'$, so $\eta$ is injective and thus bijective.
\end{proof}

\begin{remark}
  Note that in the proof of Theorem~\ref{main-thm-2}, no mention is made of the values of $\iefp_i(\sig)$.  The reason is that a permutation is determined uniquely by a much smaller set of statistics than those present in the labels of the corresponding Motzkin path.  It is easy to see that knowing the excedance bottoms and tops ($i$ and $\sig(i)$ for all excedances $i$), together with the set of fixed points and the vector statistics $\inve_i$ and $\inva_i$ for all $i$, is enough to determine a permutation.\end{remark}

\subsection{Some properties of $\eta$}

It follows from the definition of the map $\eta$ that the height of the left end of the $i$th step in the path $\eta(\sig)$ equals the number of non-linked excedances preceding the $i$th place in $\sig$ minus the number of preceding non-linked anti-excedances, since only these incur a height difference.  Below we give a different description of the height of the $i$th step, in terms of statistics derived from those in Definition~\ref{vector-stats}.

\begin{definition}\label{def-prex-fola}
  Given $\sig\in\clsn$ let
  \begin{itemize}
  \item $\prex_i(\sig)=\#\{x\in[n] \mid x<i<\sig(x)\}$,
  \item $\fola_i(\sig)=\#\{x\in[n] \mid \sig(x)<i<x\}$.
  \end{itemize}
\end{definition}
Note that if $i$ is an excedance then $\prex_i(\sig)=\inve_i(\sig)+\ninve_i(\sig)$ and if $i$ is an anti-excedance then $\fola_i(\sig)=\inva_i(\sig)+\ninva_i(\sig)$. If $i$ is a fixed point then $\prex_i(\sig)$ counts the same indices~$x$ as it does for an excedance.  
Note also that $\prex_i(\sig)$ is the number of excedances $x$ preceding $i$ such that $\sig(x)>i$ and that $\fola_i(\sig)$ is the number of anti-excedances $x$ following $i$ such that $\sig(x)<i$ .

\begin{proposition}
  Given $\sig\in\clsn$ let $\mp$ be the labeled Motzkin path $\mp=\eta(\sig)$.  Then, for $i\in[n]$,
\begin{enumerate}[label=(\roman*)]
\item if $i$ is a linked excedance then the $i$th step in $\mp$ is a level step at height
$\prex_i(\sig)+1$.

\item if $i$ is a non-linked excedance then the $i$th step in $\mp$ is an upstep starting at height $\prex_i(\sig)$.

\item if $i$ is a linked anti-excedance then the $i$th step in $\mp$ is a level step at height $\fola_i(\sig)+1$.

\item if $i$ is a non-linked anti-excedance then the $i$th step in $\mp$ is a downstep starting at height $\fola_i(\sig)$.

\item If $i$ is a fixed point then the $i$th step in $\mp$ is a level step whose height is $\prex_i(\sig)$.
\end{enumerate}
\label{lemma-labeling}
\end{proposition}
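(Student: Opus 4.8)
The plan is to determine, for each $i\in[n]$, the precise height of the $i$th step of $\mp=\eta(\sig)$ and then match it against the five cases. Recall from the construction of $\eta$ that the $i$th step of $\mp$ is an upstep, a downstep, or a level step according to whether $i$ is a non-linked excedance, a non-linked anti-excedance, or one of the remaining three types (linked excedance, linked anti-excedance, fixed point), and recall from the remark immediately preceding the proposition that the left endpoint of that step sits at the height $h_i$ equal to the number of non-linked excedances among $1,\dots,i-1$ minus the number of non-linked anti-excedances among $1,\dots,i-1$. Hence it suffices to compute $h_i$ and to identify it with the height named in the relevant case.

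The key auxiliary quantity is the crossing number $R_i:=\#\{x\in[n]:\ x<i\le\sig(x)\}$. Set $\varepsilon_i=1$ if $\sig^{-1}(i)<i$ and $\varepsilon_i=0$ otherwise, and $\delta_i=1$ if $\sig(i)<i$ and $\delta_i=0$ otherwise. I would first record two elementary identities for $R_i$. Splitting the count defining $R_i$ according to whether $\sig(x)=i$ or $\sig(x)>i$ gives $R_i=\prex_i(\sig)+\varepsilon_i$. Moreover, since the set of positions $\{1,\dots,i-1\}$ and the set of values $\{1,\dots,i-1\}$ have the same size and $\sig$ is a bijection, the number of pairs $x\mapsto\sig(x)$ with $x<i\le\sig(x)$ equals the number with $\sig(x)<i\le x$; splitting the latter count according to whether $x=i$ yields $R_i=\fola_i(\sig)+\delta_i$.

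Next I would show that $h_i=R_i$ for every $i\in[n]$, by induction on $i$. The base case is immediate: $h_1=0=R_1$, since $\eta(\sig)$ starts at the origin and no $x$ satisfies $x<1\le\sig(x)$. For the inductive step, on the one hand $h_{i+1}-h_i$ equals $+1$, $-1$, or $0$ precisely when $i$ is a non-linked excedance, a non-linked anti-excedance, or of one of the other three types; on the other hand a direct count (which also follows from $R_i=\prex_i(\sig)+\varepsilon_i$) shows that $R_{i+1}-R_i$ equals $1$ if $i$ is an excedance and $0$ otherwise, minus $\varepsilon_i$. Comparing these in the five cases — using that $\sig^{-1}(i)>i$ when $i$ is a non-linked excedance, $\sig^{-1}(i)<i$ when $i$ is a linked excedance, $\sig^{-1}(i)>i$ when $i$ is a linked anti-excedance, $\sig^{-1}(i)<i$ when $i$ is a non-linked anti-excedance, and $\sig^{-1}(i)=\sig(i)=i$ when $i$ is a fixed point — gives $h_{i+1}-h_i=R_{i+1}-R_i$ in each case, completing the induction.

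To finish, substitute into $h_i=R_i=\prex_i(\sig)+\varepsilon_i=\fola_i(\sig)+\delta_i$ the values of $\varepsilon_i$ and $\delta_i$ dictated by the type of $i$. For a non-linked excedance, $\varepsilon_i=0$, so the upstep begins at height $\prex_i(\sig)$, which is (ii); for a linked excedance, $\varepsilon_i=1$, so the level step lies at height $\prex_i(\sig)+1$, which is (i); for a fixed point, $\varepsilon_i=0$, giving (v); for a linked anti-excedance, $\delta_i=1$, so the level step lies at height $\fola_i(\sig)+1$, which is (iii); and for a non-linked anti-excedance, $\delta_i=1$, so $h_i=\fola_i(\sig)+1$ and the downstep descends to height $\fola_i(\sig)$, which is (iv). I do not expect a genuine obstacle here, as everything is elementary; the only point that needs care is bookkeeping the $\pm1$ shifts, namely keeping the left-endpoint height $h_i$ distinct from the height named in each case — most conspicuously for the downstep, where $\fola_i(\sig)$ is one unit below $h_i$ — and tracking the indicators $\varepsilon_i$ and $\delta_i$ across the five position types.
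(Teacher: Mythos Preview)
Your proof is correct, but the route differs from the paper's. The paper argues structurally: the height at position $i$ equals the number of maximal chains of excedances that span $i$, because each such chain contributes exactly one non-linked excedance (its starter) before $i$ and its terminating non-linked anti-excedance after $i$; since each spanning chain contains exactly one $x$ with $x<i<\sig(x)$, this count equals $\prex_i(\sig)$ for a non-linked excedance, with the extra $+1$ for a linked excedance coming from its own chain. The anti-excedance cases are then dispatched by reverse-complement symmetry rather than by a separate computation. Your argument instead introduces the crossing count $R_i=\#\{x:x<i\le\sig(x)\}$, proves the two closed forms $R_i=\prex_i(\sig)+\varepsilon_i=\fola_i(\sig)+\delta_i$ by splitting off the boundary cases $\sig(x)=i$ and $x=i$, and then identifies $R_i$ with the left-endpoint height $h_i$ by a one-step induction on increments. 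This is more mechanical but entirely self-contained: it treats all five cases uniformly without appealing to chains or to the reverse-complement involution, and the identity $\#\{x<i\le\sig(x)\}=\#\{\sig(x)<i\le x\}$ is a clean substitute for the paper's symmetry step. One small point worth stating explicitly: your computation gives $h_i=\fola_i(\sig)+1$ for a non-linked anti-excedance, so the downstep runs from height $\fola_i(\sig)+1$ down to $\fola_i(\sig)$; this is indeed what the label $h^{\inva_i}\ell^{\ninva_i}r$ (with $\inva_i+\ninva_i=\fola_i$) forces in the Motzkin labeling and what the paper's own symmetry argument yields, so your reading of~(iv) as ``descends to height $\fola_i(\sig)$'' is the right one.
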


\begin{proof}
Recall that the starter of each maximal chain of excedances is a non-linked excedance, 
corresponding to an upstep, while any remaining excedances in the chain are necessarily 
linked, corresponding to level steps. Furthermore, each maximal chain of excedances terminates in a non-linked anti-excedance~$j$, corresponding to a downstep, namely, where $\sig^{-1}(j)$ is the last excedance in the chain. Conversely a non-linked anti-excedance always terminates some chain of excedances. 

Thus, the number of non-linked excedances (strictly) preceding an excedance $i$ equals the 
number of maximal chains of excedances starting before $i$, and the number of non-linked anti-excedances preceding $i$ equals the number of maximal chains of excedances that end 
before $i$.  The difference between these two numbers is the number of maximal chains of excedances that span $i$, but this difference is also the height of the $i$th step of $\mp$, as pointed out just before Definition~\ref{def-prex-fola}.  Each such chain spanning $i$ has a rightmost excedance $x$ strictly preceding $i$ if $i$ is a non-linked excedance, which thus satisfies $x<i<\sig(x)$, and those inequalities are satisfied by precisely one excedance $x$ in each maximal chain 
spanning $i$. The number of such $x$ is precisely $\prex_i(\sig)$, so if $i$ is a non-linked excedance the $i$th step starts at height $\prex_i(\sig)$.  If $i$ is a linked excedance then the maximal chain that $i$ belongs to also spans $i$, and so the $i$th step (which is level) is at height $\prex_i(\sig)+1$.

The argument in the case of anti-excedances is analogous, since reversing a path $\eta(\sig)$, which corresponds to taking the reverse complement of $\sig$ (see proof of  Lemma~\ref{lemma-Motzkin-steps}), turns excedances into anti-excedances and vice versa, and preserves linking.

Finally, if $i$ is a  fixed point then the number of chains spanning $i$ equals the 
number of elements  $x\in[n]$ satisfying $x<i<\sig(x)$, 
so the height of the corresponding level step is $\prex_i(\sig)$, the argument being identical to that for non-linked excedances. 
\end{proof}

\subsection{Ties to previous bijections}\label{sec-comparisons}

As mentioned before, several authors have defined statistics on permutations that have 
then been used to construct bijections to Motzkin paths, labeled to reflect those statistics~\cite{fravie,foata-zeilberger-denerts,biane-exc-inv-heine,simion-stanton-octabasic,ClarkeSteingrimssonZeng,randrianarivony-poly-ortho-sheffer}.  These statistics essentially come in two flavors, being based either on excedances, as in the present paper 
and~\cite{foata-zeilberger-denerts,biane-exc-inv-heine,randrianarivony-poly-ortho-sheffer} or descents as in~\cite{fravie,simion-stanton-octabasic}, a bijection in~\cite{ClarkeSteingrimssonZeng} translating between these.  In each of these two groups, each permutation is mapped to the same Motzkin path, but labels differ.  We give here, without proofs, some indication of the similarities and differences between these labelings, showing in the process that the bijection introduced in this paper cannot be recovered from previous published work.

First, it is helpful to consider a minor modification of the setup in~\cite{simion-stanton-octabasic}, where Simion and Stanton partition a permutation into \emph{ascent blocks} (which they call \emph{runs}), maximal contiguous increasing segments in a a permutation.  To facilitate the discussion here, we will follow~\cite{ClarkeSteingrimssonZeng} and partition into \emph{descent blocks}, which are decreasing rather than increasing.  A bijection translating the ascent-based framework in~\cite{simion-stanton-octabasic} to the descent-based one in~\cite{ClarkeSteingrimssonZeng} is taking \emph{reverse} of each permutation, that is, sending a permutation $a_1a_2\ldots a_n$ to $a_n\ldots a_2a_1$. For example, the reverse of the permutation $36\dd4\dd125$ is $521\dd4\dd63$, where we separate the ascent/descent blocks by dashes.  We then say that the first letter in a non-singleton block is an \emph{opener} and its last letter a \emph{closer}. Other letters in non-singleton blocks are \emph{insiders} (\emph{continuators} in~\cite{simion-stanton-octabasic}). In $521\dd4\dd63$ the openers are $5,6$, the closers are $1,3$, whereas~$2$ is an insider and $4$ a singleton.  Clearly, reversing a permutation interchanges openers with closers and preserves insiders and singletons, and the vector statistics $\lsg_i$ and $\rsg_i$~in\cite[Def.~2.1]{simion-stanton-octabasic} are also interchanged.  What we say about the statistics of Simion and Stanton below thus applies to the reversed version, with descent blocks instead of ascent blocks.

In~\cite[Section~3]{ClarkeSteingrimssonZeng} a bijection $\Phi$ on permutations is presented that translates the above `descent based' classes of letters into `excedance based' classes. Namely, in a permutation $\pi=a_1a_2\ldots a_n$, written in one line notation, if $a_i>i$ then $a_i$ is an `excedance top' and $i$ and `excedance bottom', whereas if $a_i\le i$ then $a_i$ is a `non-excedance top' and $i$ a `non-excedance bottom'.  When $\Phi$ is applied to a permutation, such as $521\dd3\dd6\dd74$, which $\Phi$ maps to 2715364, we get the following correspondences between these four classes, where the last column gives the classes used in the present paper (the singletons 3 and 6 mapping to 
a linked anti-excedance and a fixed point, respectively):

\setlength{\topsep}{-1em}
\begin{center}
\renewcommand{\arraystretch}{1.5}
\setlength{\tabcolsep}{1.2em} 
\begin{tabular}{ccc}
\mbox{in $\sig$ \cite{simion-stanton-octabasic}} & \mbox{in $\Phi(\sig)$ \cite{ClarkeSteingrimssonZeng}} & \mbox{in $\Phi(\sig)$ (present)}\\\hline
\mbox{opener} & \mbox{excedance top \& non-excedance bottom} & \mbox{non-linked anti-excedance}\\\hline
\mbox{closer} & \mbox{non-excedance top \& excedance bottom} & \mbox{non-linked excedance}\\\hline
\mbox{insider} & \mbox{excedance top \& excedance bottom} & \mbox{linked excedance}\\\hline
\mbox{singleton} & \mbox{non-excedance top \& non-excedance bottom} & \mbox{linked anti-excedance/fixpt}\\\hline
\end{tabular}\\[4mm]
\end{center}
\vskip1em

It turns out that all the bijections in the papers mentioned above translate a permutation to the same \emph{unlabeled} Motzkin path, modulo the bijection $\Phi$ and/or some trivial bijections such as reversing a permutation.  The bijection $\Phi$ also translates between the various statistics studied in~\cite{simion-stanton-octabasic} and~\cite{ClarkeSteingrimssonZeng}, respectively, where the inversion statistics in the latter correspond to the statistic $\rsg$ in the former, which counts occurrences of the 
vincular pattern $2\dd31$.  These statistics determine the labeling of the steps in the Motzkin paths that respectively correspond to permutations in each case. 
In~\cite{simion-stanton-octabasic} Simion and Stanton also incorporate the `mirror' statistic $\lsg$, which counts occurrences of $31\dd2$.  
This latter statistic can be incorporated in Theorem~10 in~\cite{ClarkeSteingrimssonZeng} 
(as was done in~\cite[Theorem~22]{cla-mans-count}), by replacing the brackets $[\cdot]_p$ with $[\cdot]_{p,p'}$, and including the corresponding mirror images of the inversion statistics in $f(x,p,q)$ in~\cite{ClarkeSteingrimssonZeng}.

Since fixed points are treated as non-excedance tops and non-excedance bottoms in~\cite{ClarkeSteingrimssonZeng}, and thus not distinguished from linked anti-excedances, both correspond to singletons in $\Phi^{-1}(\sig)$.  
 This setup thus differs from that of the present paper and in particular leads to the asymmetry in the continued fractions   that `mix' the brackets $[n]$ and $[n+1]$ in both~\cite{simion-stanton-laguerre} and~\cite{ClarkeSteingrimssonZeng}.  As Simion and Stanton~\cite{simion-stanton-laguerre} 
base their statistics on the one-line notation of ascent blocks, and classify a letter of a permutation according to the relative sizes of its predecessor and follower, it is impossible to accommodate more than four distinct classes of letters.

 There is nevertheless a way to characterize what happens to fixed points in a permutation 
$\sig$ under $\Phi^{-1}$.  Namely, a fixed point $i$ in $\sig$ corresponds in 
$\Phi^{-1}(\sig)$ to a singleton $i$ with $\lsg(i)=0$, that is, a singleton that is not the $2$ in any occurrence of $31\dd2$.  This corresponds to the fact that a fixed point in $\sig$ can not form an inversion with a non-excedance preceding it.

In~\cite{randrianarivony-poly-ortho-sheffer}, Randrianarivony presents a generating function that expands the one in~\cite{simion-stanton-octabasic} by adding five statistics: $\orsg$, which is evaluated on each of the four types of letters (openers, closers, etc.), and $D$. The statistic $D$ is a linear combination of other statistics in the generating function, and so does not add information.
The $\orsg$ statistics, on the other hand, are not vector statistics, since they count, for each type of letter, the number of such letters $i$ in $\sig$ with $\rsg_i(\sig)=0$. Because of our reversal here of the Simion-Stanton setup, our statistic $\lsg_i$ corresponds to $\rsg_i$ in the paper by Randrianarivony.  Thus, his pointing out that 
$\orsg$ applied to the singletons counts double descents~$i$ with $\rsg(i)=0$, is equivalent to our claim about fixed points in the preceding paragraph, because a double descent in the ascent-based setup is a singleton, between two ascent blocks.

As for the bijections of Fran\c{c}on and Viennot~\cite{fravie} and Foata and Zeilberger~\cite{foata-zeilberger-denerts}, it is explained in~\cite[Section~5]{ClarkeSteingrimssonZeng} how they are equivalent, via the bijection~$\Phi$.

In short, separating fixed points as a class of their own --- as we have done here --- in 
defining the vector statistics at the heart of these different schemes, is not possible within the schemes of Simion-Stanton~\cite{simion-stanton-octabasic} and Randrianarivony~\cite{randrianarivony-poly-ortho-sheffer}. The same is true of the other schemes mentioned above, which all are equivalent to that of Simion and Stanton, as previously explained.

In the recent preprint~\cite{sokal-zeng-masterpolys} Sokal and Zeng consider both `records' (left-to-right maxima) and  `antirecords' (right-to-left minima), and partition letters in a permutation into four classes depending on these attributes.  They also partition letters into `cycle peaks', `cycle valleys', `cycle double rises', `cycle double falls' and fixed points, the double rises and double falls corresponding to our linked excedances and linked anti-excedances, respectively, the other two `cycle' classes to our non-linked cases of these.  By considering intersections of these two kinds of classes, they come up with a partitioning into ten distinct categories. In addition to this they incorporate nine statistics that are refinements of the crossings and nestings defined by Corteel~\cite{Corteel2007}, combinations of which can be shown to equal our inversion statistics in items~\ref{firststat}-\ref{laststat}, Definition~\ref{def-invstats}.

  It is worth comparing our labels of steps in  Motzkin paths, in \eqref{labels1}--\eqref{labels5} above, with the corresponding labels of the bijection of Foata and Zeilberger~\cite{foata-zeilberger-denerts}, as described in~\cite[Section~5]{ClarkeSteingrimssonZeng}.  In~\cite{ClarkeSteingrimssonZeng} the label assigned to each excedance $i$ is the number of excedances $j$ preceding $i$ where $\sig(j)>\sig(i)$, that is,
  the label of an excedance is the `inversion bottom number' of that excedance among all excedances.  This agrees exactly with our label $\inve_i(\sig)$, which is independent of whether the excedance $i$ is linked.
In \cite{ClarkeSteingrimssonZeng}, the label assigned to each non-excedance (which includes fixed points), is 
the number of non-excedances following it and smaller than it, which agrees with our $\inva$ only in the case of anti-excedances.

While a fixed point is classified as a non-excedance in~\cite{ClarkeSteingrimssonZeng}, and 
the label associated to it therefore depends on smaller non-excedances following it, here we label a fixed point $i$ by $uw^{\iefp_i(\sig)}$, which thus depends on excedances preceding it.  The labeling in~\cite{ClarkeSteingrimssonZeng} consequently differs from ours.    Our labeling is also different from Biane's labeling in~\cite{biane-exc-inv-heine}, which in~\cite{ClarkeSteingrimssonZeng} is shown to be closely related to the labeling described in~\cite{ClarkeSteingrimssonZeng}.  Finally, the  labeling of Simion-Stanton in~\cite{simion-stanton-laguerre}, after the translation applying $\Phi$  to the reverse of each permutation in their setting, is also different from ours, as is inevitable since they do not distinguish between what after that translation become linked anti-excedances and fixed points, respectively.

Crucially, our labeling of excedances and anti-excedances is independent of fixed points in a permutation, which is not the case for the labeling in~\cite{ClarkeSteingrimssonZeng} nor, by the equivalences we have described above, for the other bijections 
mentioned above (except for the recent preprint~\cite{sokal-zeng-masterpolys}).  
Specifically, underlying all these results is some scheme for partitioning letters that form a permutation, which is at the core of each construction. While the aforementioned papers (again excluding~\cite{sokal-zeng-masterpolys}) can only accommodate a partition into four classes, we have a separate, fifth class, containing the fixed points. This distinction allows for a greater variety of special cases, extensions to signed and colored permutations, as well as to a natural notion of $k$-arrangements.

\section{Acknowledgements}
We would like to thank Roman Stocker and Martin Ackermann for their hospitality at ETH Z\"urich where some of this work was done. This collaboration started through a LMS Celebrating New Appointments Grant and was further enabled by the Lancaster Mathematics \& Statistics Visitor Fund. Thanks also to Alan Sokal and Jiang Zeng, for pointing out the paper by Randrianarivony~\cite{randrianarivony-poly-ortho-sheffer}, and for kindly sharing a draft of their manuscript~\cite{sokal-zeng-masterpolys} with us as ours was being finalized. Lastly, we are indebted to an anonymous referee whose careful reading and valuable suggestions improved this paper, especially with regard to the streamlining of the proof of Theorem~\ref{main-thm-2} via Lemma~\ref{lemma-maps-sets}.


\def\aboveex{1.8ex}
\def\belowex{-2ex}

\begin{landscape}
\oddsidemargin1ex
\begin{table}
\setlength{\tabcolsep}{3ex}
\small
\begin{tabular}{llll}
Parameter settings&Combinatorial objects&\parbox{12em}{
Moment seq.
(\sc oeis \cite{OEIS})
}&Measure \\[\aboveex]\hline\\[\belowex] 

&Permutations & $n!$\,\, (A000142)& Exponential: $\displaystyle e^{-x} \mathbbm 1_{[0,\infty)}\,dx$ \\[\aboveex]\hline\\[\belowex] 

\parbox{8em}{$h,s,t,u=0$}&
\parbox{8em}{Perfect matchings}&
\parbox{12em}{
$(2n-1)!!$\,\,\,\, (A001147)
}&
\parbox{8em}{Gaussian$^\ast$:$\frac{1}{\sqrt{2\pi}}e^{-x^2/2}\,dx$} \\[\aboveex]\hline\\[\belowex] 

\parbox{12em}{$h,s,t,u=0;\;\; c=q$}&
\parbox{14em}{Perfect matchings by \#crossings }&
\parbox{12em}{
$\disp\sum_{\pi\in \mathcal P_2(2n)}q^{\text{cr}(\pi)}$ (A067311)
}&
\parbox{12em}{$q$-Gaussian$^\ast$ \cite{Bozejko1991, Speicher1992}} \\[\aboveex]\hline\\[\belowex] 

\parbox{12em}{$h,s,t,u=0;\;\; c=q;\;\,\, d=t$}&
\parbox{12em}{Perfect  matchings by\\ \#crossings \& nestings}&
\parbox{8em}{$\disp\sum_{\pi\in \mathcal P_2(2n)}\!\!\!\!q^{\text{cr}(\pi)}t^{\text{nest}(\pi)}$}&
\parbox{12em}{$(q,t)$-Gaussian$^\ast$ \\\cite{Blitvic2012, Blitvic2014}} \\[\aboveex]\hline\\[\belowex] 

\parbox{8em}{$c,h,s,t,u=0$}&
\parbox{12em}{ Non-crossing set partitions}&
\parbox{8em}{
$\frac{1}{n+1}{2n\choose n}$\,\,\,\,(A000108) Catalan numbers 
}&
\parbox{9em}{Wigner semicircle* \\$\frac{1}{2\pi}\sqrt{4-x^2}\mathbbm 1_{[-2,2]}\,dx$} \\[\aboveex]\hline\\[\belowex] 

\parbox{8em}{$h,t=0;\;\;\; p,u=\lambda$} &
\parbox{12em}{Set partitions by \#blocks} &
\parbox{12em}{
Stirling $2^{nd}$: $\sum_{\pi \in \mathcal P(n)}\lambda^{|\pi|}$ (A008277)
} &
\parbox{12em}{Poisson, rate $\lambda$: \,\,\,\,$\displaystyle e^{-\lambda}\lambda^k/k!$} \\[\aboveex]\hline\\[\belowex] 

\parbox{10em}{$a,c,h,t=0,\;\; p,u=\lambda$}&
\parbox{12em}{Non-crossing set partitions of $[n]$ into $k$ blocks}&
\parbox{8.5em}{
$\sum_{k} \frac{1}{n}{n\choose k}{n\choose k-1}\lambda^k$ Narayana numbers (A001263)
}&
\parbox{12em}{Marchenko-Pastur:\\ $\lambda_{\pm}=(1\pm\sqrt\lambda)^2,\lambda\geq1$, \\
$\frac{\sqrt{(\lambda_+-x)(x-\lambda_-)}}{2\pi x}{\mathbbm1}_{[\lambda_-,\lambda_+]}\,dx$} \\[4.5ex]\hline\\[\belowex] 

\parbox{12em}{$h,t=0;\;\; a,c=q;\;\;p,u=\lambda$ }&
\parbox{9em}{Restricted crossings\\in partitions \cite{Biane1997}}&
\parbox{9em}{
$\disp\sum_{\pi \in \mathcal P(n)}q^{\text{cr}(\pi)}\lambda^{|\pi|}$
}&
\parbox{12em}{$q$-Poisson, rate $\lambda$\,\cite{Anshelevich2001}} \\[\aboveex]\hline\\[\belowex] 

\parbox{13em}{$h,t,u=0;\;\; b,d=x;\;\;a,c=q$ }&
\parbox{12em}{Restricted cross/nest\\ in partitions \cite{Kasraoui2006}}&
\parbox{6em}{
$\displaystyle\sum_{\pi \in \mathcal P(n)} q^{\text{cr}(\pi)}x^{\text{nest}(\pi)}$
}&
\parbox{9em}{$(q,t)$-Poisson \cite{Ejsmont}} \\[\aboveex]\hline\\[\belowex] 

$u=0$&Derangements&A000166&e.g. \cite{Martin2015} \\[.7ex]\hline\\[\belowex] 

$s,t,u=0$ & \parbox{14em}{Alternating permutations of $[2n]$}&A000364&e.g. \cite{Sokal2018}*
\\[1ex]\hline\\[\belowex] 

$a,c,f,h=0;\;\;p\!=\!2$ & Little Schr\"oder numbers& A001003 & \cite{Mlotowski2013} \\[1ex]\hline\\[\belowex] 

$a,u=0;\;\;t=2$ & 
\parbox{16em}{Permutations, no strong fixed points} & A052186 & \cite{Martin2015} \\[1ex]\hline\\[\belowex] 

$p,s=x$ & Eulerian polynomials &$\sum_{\sigma\in\clsn} x^{\des(\sigma)}$ (A008292)& \cite{Barry2018,Borowiec2016} \\[\aboveex]\hline\\[\belowex] 

\parbox{12em}{$p,s=2x;\;\; r,t=2;\;\;\\ u=x+1$}&
\parbox{12em}{Eulerian polynomials for\\ hyperoctahedral groups} &$\sum_{\sigma\in B_n} x^{\des(\sigma)}$ (A060187)& \cite{Barry2018,Borowiec2016} \\[1.5ex]\hline\\[\belowex]
\end{tabular}
  \caption{\label{table-measures}Examples of moment sequences encoded in generating function \cf\ in Definition~\ref{def-gz} with their corresponding probability laws. 
All other parameters, unless otherwise indicated, are set to $1$.  $\mathcal P(n)$ is set partitions of $[n]$, $\mathcal P_2(2n)$ perfect matchings. Measures marked by an asterisk ($\,^\ast$) have vanishing odd moments; in this case the sequences shown are the even moments. Note that `suppressing' zeros from the odd locations of a moment sequence amounts to squaring the underlying random variable. Conversely, for measures supported on $[0,\infty)$, inserting zeros in the odd locations amounts to taking symmetric (randomized) square roots.}
\end{table}
\end{landscape}


\begin{table}[h]
\small
\setlength{\tabcolsep}{0pt}
\begin{tabular}{lll}
\parbox{12em}{Orthogonal\\ polynomial sequence}&\parbox{5em}{Normaliz.\\recurr.} &Parameters $(a,b,c,d,f,g,h,\ell,p,r,s,t,u)$ in \cf\ with $w=0$ in all cases\\[2ex]
\hline\\[-2ex]
Discrete $q$-Hermite II&(14.29.5)&$(0,0,0,q^{-2},0,0,1,q,q^{-1},1-q,0,0,0)$\\

Discrete $q$-Hermite I & (14.28.4) & $(0,0,0,q,0,0,1,q,1,1-q,0,0,0)$\\

Stieltjes-Wigert&(14.27.4)&$(0,q^{-2},0,q^{-4},0,q^{-1},1,q,q^{-3},1-q,(1+q)q^{-3},-q^{-1},q^{-1})$\\

Continuous $q$-Hermite&(14.26.4)&$(0,0,0,1,0,0,1,q,1/4,1-q,0,0,0)$\\

Al-Salam-Carlitz II& (14.25.4)&$(0,q^{-1},0,q^{-2},0,0,1,q,a q^{-1},1-q,(a+1)q^{-1},0,a+1)$\\

Al-Salam-Carlitz I& (14.24.4)&$(0,q,0,q,0,0,0,q,a,1-q,(a+1)q,0,a+1)$\\

$q$-Charlier ($a$=1)&(14.23.4)&$(0,q^{-2},0,q^{-4},0,q^{-2},1,q^{2},q^{-3},1-q^2,q^{-3},q^{-2},1+q^{-1})$\\

$q$-Laguerre ($\alpha$=0)&(14.21.6)&$(0,q^{-2},q^{-4},q^{-3},0,q^{-1},1,q,q^{-3}$--$q^{-2},1$--$q,(1$+$q)q^{-3},$-$2q^{-1},(1$-$q)q^{-1})$\\

Little $q$-Lag./Wall ($a$=1)&(14.20.4)&$(0,q,q,q^2,0,q^2,q,q^2,1-q,q(1-q),2q,-(1+q)q^2,1-q)$\\

Cont. $q$-Laguerre ($\alpha$=0)&(14.19.4)&$(0,q,1,q,0,0,1,q,(1$--$q)/2,(1$--$q)/2,(1$+$\sqrt q)q^{5/4}/2,0,q^{1/4}(1$+$q^{1/2})/2)$\\

Cont. big $q$-Hermite&(14.18.5)&$(0,q,1,q,0,0,1,0,(1-q)/4,1,aq/2,0,a/2)$\\
$q$-Meixner ($b$=$c$=1)&(14.13.4)&$(0,q^{-2},q^{-4},q^{-3},q^{-2},q^{-1},1,q^2,q^{-3}\!-q^{-2},1\!-q^2,q^{-3},q^{-2}\!-q^{-1},q^{-1})$\\

Big $q$-Laguerre ($a$=$-b$=1) &(14.11.4)&$(0,q^2,q,q^2,0,q,1,q^2,q^2(1-q),1-q^2,q^3(1+q),-q^2,q^2)$\\
$q$-Meixner-Pollaczek($a$=$\sqrt{q}$)&(14.9.4)&$(0,q,1,q,0,0,1,q,(1-q)/2,(1-q)/2,q^{3/2}\cos(\phi),0,\sqrt{q}\cos(\phi))$\\

Al-Salam-Chihara ($ab$=$q$)&(14.8.5)&$(0,q,1,q,0,0,1,q,(1-q)/2,(1-q)/2,(a+b)q/2,0,(a+b)/2)$\\

Continuous dual $q$-Hahn~~~~~~&(14.3.5)&$(0,q^2,1,q^2,0,q^2,1,q^2,(1-q^2)/2,(1-q^2)/2,a^{-1}q^4/2,aq^2/2,(a^2+q^2)/2a)$\\
 ($ab$=$-q$, $ac$=$q$, $bc$=$-q$)
\end{tabular}
\caption{\label{table-ops}
  Examples of orthogonal polynomial families in $q$-Askey scheme  encompassed by~$\cf$ in Definition~\ref{def-gz}. References for normalized recurrences are to equations in~\cite{koekoek-hyp-ortho-poly-q}. Choice of parameters in \cf\ is generally non-unique.}
\end{table}


\bibliographystyle{alpha}
\bibliography{BSrefs}

\end{document}